\algnewcommand\algorithmicforeach{\textbf{for each}}
\newif\ifdviwin
\numberwithin{equation}{section}
\theoremstyle{plain}
\newtheorem{theorem}{Theorem}[section]
\newtheorem{conjecture}[theorem]{Conjecture}
\newtheorem*{Main Theorem}{Main Theorem}
\newtheorem{proposition}[theorem]{Proposition}
\newtheorem{lemma}[theorem]{Lemma}
\newtheorem{corollary}[theorem]{Corollary}
\theoremstyle{definition}
\newtheorem{notation}[theorem]{Notation}
\newtheorem{construction}[theorem]{Construction}
\newtheorem{remark}[theorem]{Remark}
\newtheorem{definition}[theorem]{Definition}
\newtheorem{example}[theorem]{Example}
\newtheorem{question}[theorem]{Question}
\newtheorem*{Acknowledgments}{Acknowledgments}
\theoremstyle{remark}
\newtheorem*{Claim1}{Claim 1}
\newtheorem*{Claim2}{Claim 2}
\newtheorem*{Claim3}{Claim 3}
\newtheorem*{Claim4}{Claim 4}
\newtheorem*{Claim5}{Claim 5}
\newcommand{\facets}{\mathrm{Facets}}
\newcommand{\pd}{\mathrm{pd}}
\newcommand{\lcm}{\mathrm{lcm}}
\newcommand{\LCM}{\mathrm{LCM}}
\newcommand{\Scarf}{\mathbb{S}}
\newcommand{\supp}{\mathrm{Supp}}
\renewcommand{\exp}{\mathrm{exp}}
\newcommand{\ba}{\mathbf{a}}
\newcommand{\bb}{\mathbf{b}}
\newcommand{\bc}{\mathbf{c}}
\newcommand{\be}{\mathbf{e}}
\newcommand{\bj}{\mathbf{1}}
\newcommand{\bu}{\mathbf{u}}
\newcommand{\bv}{\mathbf{v}}
\newcommand{\bw}{\mathbf{w}}
\newcommand{\bn}{\mathbf{n}}
\newcommand{\m}{\mathbf{m}}
\newcommand{\sfk}{\mathsf k}
\newcommand{\sq}{\succcurlyeq}
\newcommand{\E}{\mathcal{E}}
\newcommand{\N}{\mathcal{N}}
\newcommand{\U}{\mathcal{U}}
\newcommand{\MS}{\mathcal{S}}
\newcommand{\LL}{\mathbb{L}}
\newcommand{\NN}{\mathbb{N}}
\newcommand{\RR}{\mathbb{R}}
\newcommand{\ZZ}{\mathbb{Z}}
\newcommand{\TT}{\mathbb{T}}
\renewcommand{\SS}{\mathbb{S}}
\newcommand{\UU}{\mathbb{U}}
\newcommand{\HH}{\mathcal{H}}
\newcommand{\bbeta}{\boldsymbol{\beta}}
\newcommand{\Lrq}{\LL^r_q}
\newcommand{\UUrq}{\UU^r_q}
\newcommand{\Erq}{{\E_q}^r}
\newcommand{\Urq}{\U^r_q}
\newcommand{\Srq}{\SS^r_q}  
\newcommand{\Sq}{\SS^3_q}  
\newcommand{\Trq}{\TT^r_q}  
\newcommand{\Nrq}{\N^r_q}
\newcommand{\Hrq}{\mathcal{H}^r_q}
\newcommand{\Uar}{\U_{\ba}^r}
\newcommand{\e}{\epsilon}
\newcommand{\pme}{{\pmb{\e}}}
\newcommand{\pmea}{\pme^{\ba}}
\newcommand{\pmeb}{\pme^{\bb}}
\newcommand{\lex}{\mathrm{lex}}
\newcommand{\qand}{\quad \mbox{and} \quad }
\newcommand{\qor}{\quad \mbox{or} \quad }
\newcommand{\qif}{\quad \mbox{if} \quad }
\newcommand{\qare}{\quad \mbox{are} \quad }
\newcommand{\qfor}{\quad \mbox{for} \quad }
\newcommand{\qwhere}{\quad \mbox{where} \quad }
\newcommand{\qwith}{\quad \mbox{with} \quad }
\newcommand{\qforsome}{\quad \mbox{for some} \quad }
\newcommand{\qforall}{\quad \mbox{for all} \quad }
\newcommand{\st}{\colon}
\title{Realizing resolutions of powers of extremal ideals}
\author[T. Chau]{Trung Chau}
\address[T. Chau]
{Chennai Mathematical Institute, H1 SIPCOT IT Park, Siruseri, Kelambakkam 603103, India.}
\email{chauchitrung1996@gmail.com}
\author[A. Duval]{Art M. Duval}
\address[A. Duval]
{Department of Mathematical Sciences, University of Texas at El Paso, El Paso, TX 79968-0514, U.S.A.}
\email{aduval@utep.edu}
\author[S. Faridi]{Sara Faridi}
\address[S. Faridi]
{Department of Mathematics \& Statistics, 
Dalhousie University, 
6297 Castine Way, 
PO BOX 15000, 
Halifax, NS, 
Canada B3H 4R2 
}
\email{faridi@dal.ca}
\author[T. Holleben]{Thiago Holleben}
\address[T. Holleben]
{Department of Mathematics \& Statistics, 
Dalhousie University, 
6297 Castine Way, 
PO BOX 15000, 
Halifax, NS, 
Canada B3H 4R2}
\email{hollebenthiago@dal.ca}
\author[S.~Morey]{Susan Morey}
\address[S. Morey]{Department of Mathematics\\
Texas State University\\
601 University Dr.\\
San Marcos, TX 78666\\U.S.A.}
\email{morey@txstate.edu}
\author[L.~M.~\c{S}ega]{Liana M.~\c{S}ega}
\address[L.~M.~\c{S}ega]{Division of Computing, Analytics and Mathematics, 
University of Missouri-Kansas City, Kansas City, MO 64110 U.S.A.}
\email{segal@umkc.edu}
\date{\today}
\definecolor{DarkGreen}{RGB}{50,203,0}
\begin{document}

\begin{abstract} Extremal ideals are a class of square-free monomial ideals which dominate and determine many  algebraic invariants of powers of all square-free monomial ideals. For example, the $r^{th}$ power $\Erq$ of the extremal ideal on $q$ generators has the maximum Betti numbers among the $r^{th}$ power of any square-free monomial ideal with $q$ generators. In this paper we study the combinatorial and geometric structure of the (minimal) free resolutions of powers of square-free monomial ideals via the resolutions of powers of extremal ideals. Although the end results are algebraic, this problem has a natural interpretation in terms of polytopes and discrete geometry. Our guiding conjecture is that all powers $\Erq$ of extremal ideals have resolutions  supported on their Scarf simplicial complexes, and thus their resolutions are as small as possible. This conjecture is known to hold for $r \leq 2$ or $q \leq 4$. In this paper we prove the conjecture holds for $r=3$ and any $q\geq 1$ by giving a complete description of the Scarf complex of ${\E_q}^3$. This effectively gives us a sharp bound on the betti numbers and projective dimension of the third power of any square-free momomial ideal. For large $i$ and $q$, our bounds on the $i^{th}$ betti numbers are an exponential improvement over previously known bounds. We also describe a large number of faces of the Scarf complex of $\Erq$ for any $r,q \geq 1$. 
\end{abstract}

\keywords{free resolutions; bounds on betti numbers; extremal ideals; monomial ideals; powers of ideals; Scarf complex; discrete Morse theory}

\maketitle

\section{Introduction}

In Commutative Algebra, the study of relations between homogeneous polynomials, viewed as generators of an ideal, is a difficult problem that is connected to many other areas of mathematics including Algebraic Geometry,  Combinatorics, and Computational Algebra. The main tool for this purpose, going back to Hilbert's work in the 19th century,  is the construction of a {\it free resolution}: an exact sequence of free modules built recursively from the relations, called {\it syzygies}, between the polynomials.  

The problem becomes notoriously more difficult when studying powers of the ideals, or equivalently, products of the polynomials. Due to its difficulty, the best types of results one can expect for powers of arbitrary  ideals, or even for special classes such as  monomial ideals, are upper bounds for the size of a minimal resolution, which is measured  by the {\it betti numbers} of the ideal.

 Diana Taylor's thesis~\cite{T} in the 1960's, followed  some decades later by Bayer and Sturmfels~\cite{BS}  and then many others (see~\cite{BPS,Mer,L} for examples) established that a free resolution of  a monomial ideal  $I$ with $q$ generators  can be realized from the  chain complex of a cell complex on $q$ vertices. In this case, we say that the cell complex \textit{supports} a resolution of $I$. A lower bound for a minimal free resolution of $I$ using this point of view is the {\it Scarf}  complex  $\Scarf(I)$~\cite{BPS}, which is typically too small to support a resolution, but every free resolution must contain it. An upper bound is the {\it Taylor resolution}~\cite{T} coming from a full simplex on $q$ vertices, which we will denote  by $\TT(I)$. The Taylor resolution provides sharp binomial bounds for betti numbers: the $i^{th}$ betti number $\beta_i(I)$ is bounded by the number of $i$-faces of the simplex so that  $\beta_i(I) \leq \binom{q}{i+1}$, and there are ideals $I$ for which equality holds.
 
A closer inspection, however, reveals that when $I$ has more than one generator, some faces of the simplex $\TT(I^r)$ never appear in a minimal free resolution of $I^r$ when $r>1$, resulting in an increasing gap between the binomial bounds and the actual betti numbers for a given $q$ as $r$ grows. 
{\it Extremal ideals} were born from this observation by Cooper et al.\ in~\cite{L2,Lr}. The authors in those papers showed that if $\E_q$ is the extremal ideal on $q$ generators, and  $I$ is any square-free monomial ideal with $q$ generators, there is a ring homomorphism $\psi_I$ which maps $\Erq$ onto $I^r$ for all $r$, and using $\psi_I$ they showed:
\begin{theorem}[{\bf \cite[Theorem 7.9]{Lr}}]\label{t:e-bound}
If $I$ is generated by $q$ square-free monomials, then for all $i\geq 0$ and $r \geq 1$, 
$$\beta_i(I^r) \leq  \beta_i(\Erq).$$ 
\end{theorem}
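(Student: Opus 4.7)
The plan is to leverage the ring homomorphism $\psi_I \colon R \to S$ provided by the theory of extremal ideals, where $R$ is the polynomial ring containing $\E_q$ and $S$ is the polynomial ring containing $I$. By construction, $R$ has enough variables to encode every possible overlap pattern of $q$ square-free monomials, and $\psi_I$ specializes these variables so that the $j$-th generator of $\E_q$ is sent to the $j$-th generator of $I$. Multiplicativity then yields $\psi_I(\Erq) \cdot S = I^r$ for every $r \geq 1$, pairing generators of $\Erq$ with generators of $I^r$ in a (possibly many-to-one) correspondence.

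Given this, I would take a minimal multigraded free resolution $F_\bullet$ of $\Erq$ over $R$ and form the base-change complex $F_\bullet \otimes_R S$, where $S$ is viewed as an $R$-algebra via $\psi_I$. The key technical claim is that this complex is exact, hence a (possibly non-minimal) $S$-free resolution of $I^r$ whose ranks in each homological degree equal those of $F_\bullet$. Since any minimal $S$-free resolution of $I^r$ has ranks no larger than those of any other free resolution, the bound $\beta_i(I^r) \leq \beta_i(\Erq)$ follows at once from the definition of the Betti numbers.

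To verify the acyclicity of $F_\bullet \otimes_R S$, one approach is to factor $\psi_I$ as a sequence of elementary variable identifications $x_a \mapsto x_b$, each corresponding to quotienting $R$ by the linear form $x_a - x_b$. The multigraded combinatorics built into the definition of $\E_q$ should arrange that at every stage the relevant linear form acts as a nonzerodivisor on the intermediate module, so a Koszul-style regular-sequence argument preserves exactness throughout the specialization. A cleaner alternative is the lcm-lattice viewpoint: the lcm lattice of $I^r$ is a join-preserving quotient of the lcm lattice of $\Erq$, and by the Gasharov--Peeva--Welker theorem, any multigraded free resolution supported by the larger lattice descends to one supported by the quotient with no larger ranks in each multidegree.

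The main obstacle is making the descent in the previous paragraph rigorous: one must check that the precise construction of $\E_q$ really does force the lcm lattice of $I^r$ to be obtained from that of $\Erq$ by a sequence of merges that are compatible with the differentials of $F_\bullet$, so that each specialization step produces an exact complex rather than merely a complex. Once this combinatorial/homological compatibility is established, the Betti-number inequality is immediate, and the argument applies uniformly to all $r \geq 1$ and all $i \geq 0$.
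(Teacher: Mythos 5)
Your second route --- comparing lcm lattices and invoking the Gasharov--Peeva--Welker theorem --- is in essence the strategy of the cited source \cite{Lr}, so the overall plan is sound; but as written the proposal has a genuine gap, which you yourself flag: the claim that $\psi_I$ induces a join-preserving surjection $\LCM(\Erq)\to\LCM(I^r)$ carrying generators to generators is asserted, not proved, and it is the entire content of the theorem. It does admit a short direct verification. Writing $S=\sfk[y_1,\dots,y_n]$, set $A_j=\{i\in[q]: y_j\mid m_i\}$ for each variable $y_j$, and define $\psi_I(x_A)=\prod_{j:\,A_j=A}y_j$ (an empty product being $1$), so that $\psi_I(\e_i)=m_i$ by square-freeness. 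The exponent of $x_A$ in $\pme^{\ba}$ is $\ba\cdot\be_A$, while the exponent of $y_j$ in $\m^{\ba}:=m_1^{a_1}\cdots m_q^{a_q}$ is $\ba\cdot\be_{A_j}$; since taking $\max$ over $\ba_1,\dots,\ba_k$ commutes with this substitution, $\psi_I\bigl(\lcm(\pme^{\ba_1},\dots,\pme^{\ba_k})\bigr)=\lcm(\m^{\ba_1},\dots,\m^{\ba_k})$, which is exactly the join-preservation you need. One must also handle the fact that the $\m^{\ba}$ may be a non-minimal generating set of $I^r$: the comparison then bounds the Betti numbers of $I^r$ computed from that generating set, which still dominate the true (minimal) ones.

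Your first route, by contrast, would fail as stated. The complex $F_\bullet\otimes_R S$ is in general not exact --- its higher homology computes $\Tor^R_*(\Erq,S)$ --- and Betti numbers are not preserved under arbitrary specialization; only an inequality can survive, which is precisely what the lattice argument delivers without any exactness claim. Moreover $\psi_I$ is not a composition of identifications $x_a\mapsto x_b$: whenever no variable of $S$ has support pattern $A$, the variable $x_A$ is sent to $1$, which is not a quotient by a linear form, and there is no reason for the forms $x_a-x_b$ to be nonzerodivisors on the successive syzygy modules. I would discard that paragraph and make the lcm computation above the centerpiece of the proof.
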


It turns out that  extremal ideals also bound many other invariants for powers of square-free monomial ideals. Indeed, in \cite{Halifax} it is shown that the homomorphism $\psi_I: \Erq  \to I^r$ preserves most algebraic properties of powers of square-free monomial ideals, including many difficult invariants. This makes understanding $\Erq$ extremely important as it provides a universal object for understanding powers of square-free monomial ideals. This also means that studying $\Erq$ is deceptively difficult. Although $\Erq$ has a concrete definition and clear symmetries, the combination of all difficulties encountered when studying powers of square-free monomial ideals is, in a sense, embedded in the study of $\Erq$. Thus a clear understanding of $\Erq$, such as one that comes from having a concretely described minimal free resolution, is highly valuable but expected to be very difficult to accomplish. 

In \cite{extremal} it was speculated that $\Erq$ has a Scarf resolution.  We formalize this into a conjecture based on the additional evidence we now have, including results of this paper.
\begin{conjecture}\label{c:conjecture}
   If $r,q\geq 1$, then $\Srq=\Scarf(\Erq)$ supports  a minimal free resolution of $\Erq.$
\end{conjecture}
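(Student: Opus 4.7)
The plan is to split the conjecture into two tasks: (1) give an explicit combinatorial description of the faces of $\Srq$, and (2) prove that this simplicial complex supports a free resolution of $\Erq$. Once support is established, minimality is automatic from the defining property of the Scarf complex --- distinct faces have distinct multidegrees --- so no additional cancellation step is required.

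For step (1), I would start by making the generator parametrization of $\Erq$ explicit: generators correspond to size-$r$ multisets drawn from $[q]$, so a subset $F$ of generators belongs to $\Srq$ exactly when no other subset (in particular no proper subset) attains the same lcm. Writing $\lcm$ of a family of generators in terms of the corresponding multisets reduces the membership question to a covering problem on exponent vectors. For $r=3$ specifically, the small arity makes a full case analysis feasible: organize candidate faces by the number of distinct variables appearing in $\lcm(F)$ and by the multiplicity pattern, then check the Scarf condition in each case. This should yield a clean facet description of $\Sq$, analogous to what is already known for $r \leq 2$.

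Step (2) is the main technical heart. My tool of choice would be algebraic discrete Morse theory applied to the Taylor complex $\TT(\Erq)$, building a matching on the non-Scarf faces whose only critical cells are the faces of $\Srq$. The large symmetry group $S_q$ acting on the variables of $\Erq$ suggests a canonical pairing: match a non-Scarf face $F$ with $F \cup \{m\}$ or $F \setminus \{m\}$ for a lex-distinguished witness $m$ of the failure of the Scarf condition. The main obstacle I anticipate is verifying acyclicity of the matching graph; the recursive self-similarity of $\Erq$ makes stray cycles easy to introduce unless the pairing rule is controlled by a strictly monotone invariant such as the multidegree refined by a lex tiebreaker.

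As a safety net --- particularly useful in the $r=3$ case --- I would fall back on the direct homological criterion that a cell complex supports a resolution of a monomial ideal iff every multidegree-lower subcomplex is $\sfk$-acyclic. With the explicit face description from step (1), acyclicity of these subcomplexes of $\Sq$ can often be verified by exhibiting a cone point or explicit contraction, using the symmetry of $\Erq$ to reduce to a manageable set of representative multidegrees. For the conjecture in full generality, an induction on $r$ relating $\Erq$ to $\E_q^{r-1}$ is the natural next step, but I expect this to be substantially harder than the $r=3$ case and to be the genuine barrier to a complete proof beyond the result established in this paper.
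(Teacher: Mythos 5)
The statement you have been asked to prove is a conjecture that the paper itself leaves open: it is known for $r\le 2$ or $q\le 4$ from prior work, and the paper's new contribution is the case $r=3$ (\cref{t:r=3}). Your proposal does not close the general case either, and you say so explicitly, so as a proof of \cref{c:conjecture} it has exactly the gap you name: there is no inductive step in $r$. Note that the paper's only inductive result, \cref{t:decrease powers}, runs in the unhelpful direction — it deduces the Scarf property for smaller $r$ and $q$ from larger ones — so it certifies that difficulty increases with $r$ and $q$ rather than providing a reduction. Your fallback criterion (acyclicity of all multidegree-lower subcomplexes) is a legitimate alternative route that the paper does not pursue, but you do not carry it out, and for $\Srq$ with $q\ge 5$ the subcomplexes below a given multidegree are not obviously cones, so this would require real work.

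For the part you sketch in detail, the $r=3$ case, your plan is essentially the paper's. Step (1) is \cref{thm:complete-list-of-faces} and \cref{t:minimalnonfaces3}; the case analysis is organized by multiplicity pattern exactly as you suggest, and it uncovers a genuinely new family of facets $W_{P,i}$ (\cref{p:weird}) not present for $r\le 2$ — your outline should anticipate that the answer is not merely ``analogous to $r\le 2$.'' Step (2) is carried out via a homogeneous acyclic matching on $\TT_q^3$ whose critical cells are the faces of $\Sq$ (\cref{t:nice-Morse-resolution}), with the witness function $\omega$ chosen per minimal nonface. Your worry about acyclicity is precisely the crux: the paper controls it by partitioning the nonfaces according to a total order on minimal nonfaces (the Cluster Lemma, \cref{clusterlemma}), by a carefully designed partition-then-lex order on vertices and cells (\cref{d:total-order}), and by the combinatorial condition (\#) of \cref{l:max}, verified case-by-case in \cref{l:r=3}. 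A ``lex-distinguished witness'' alone, without verifying something like condition (\#), would not suffice; this verification is where the actual content of the $r=3$ proof lives. Finally, your observation that minimality is automatic once support is established is correct and is exactly how the paper concludes.
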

Establishing \cref{c:conjecture} would result in effective bounds for minimal free resolutions of powers $r\geq 1$  of any square-free monomial ideal $I$ with $q$ generators, in terms of chain complexes $\mathcal{C}$ of Scarf complexes:
\begin{equation}\label{e:MFR-sharp} 
\mathcal{C}(\Scarf(I^r)) \subseteq \mbox{ Minimal Free Resolution of } I^r \subseteq \mathcal{C}(\Scarf(\Erq)).
\end{equation}

This paper is anchored in 
understanding the simplicial complex $\Srq=\Scarf(\Erq)$.
This is a simplicial complex with dimension at least $\binom{q}{r}-1$ that can be represented in a $(q-1)$-dimensional affine subspace of $q$-dimensional space with integer lattice point vertices; see~\cref{e:q34}.

  Several findings of~\cite{extremal} provide evidence for \cref{c:conjecture}. One such result is that for all $r$ and $q$ the first betti numbers of $\Erq$ are Scarf. This was shown by finding a multigraded free resolution of $\Erq$ with only Scarf multidegrees in the first homological degree. Another result is that $\Srq$ supports a minimal free resolution of $\Erq$ when  $q\leq 4$ or $r\leq 2$. These cases were settled by providing a full characterization of $\Srq$, where the minimal nonfaces are edges. 
In the next smallest case, a triangle is among the minimal nonfaces of $\SS_5^3$ (\cref{t:minimalnonfaces3}).  Thus it is no surprise that a new type of facet appears when $r=3$ (\cref{p:weird}). This suggests that the problem becomes more difficult as $q$ or $r$ increases. Indeed, we prove that $\Srq$ inherits the complexities of all $\SS_{q'}^{r'}$ for all smaller $q'$ and $r'$.

\begin{theorem}[{\bf \cref{t:decrease powers}}] 
\label{t:inductive-intro} For positive integers  $r$ and $q$, if $\Srq$ supports a resolution for $\Erq$, then so do $\SS_{q-1}^r$ and $\SS_q^{r-1}$ for ${\E_{q-1}}^r$ and ${\E_q}^{r-1}$, respectively.
\end{theorem}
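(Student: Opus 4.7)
The plan is to realize both $\SS_{q-1}^r$ and $\SS_q^{r-1}$ as induced subcomplexes of $\Srq$ and then transfer the resolution property via the Bayer--Sturmfels cellular resolution criterion. I label each generator of $\Erq$ by its multi-exponent $\bd\in\NN^q$ with $|\bd|=r$, writing $m^{\bd}=m_1^{d_1}\cdots m_q^{d_q}$, and partition the vertex set
\[G(\Erq)=G^-\sqcup G^+,\quad G^-=\{m^{\bd}:d_q=0\},\quad G^+=\{m^{\bd}:d_q\geq 1\}.\]
The identity gives a bijection $G^-\leftrightarrow G(\E_{q-1}^r)$, while division by $m_q$ gives a bijection $G^+\leftrightarrow G(\E_q^{r-1})$.

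The key structural input is that the extremal generator $m_q$ carries two distinguished variables in its factorization: a \emph{private} variable appearing only in $m_q$, whose multiplicity in any $m^{\bd}$ equals $d_q$; and an \emph{anti-private} variable appearing in every $m_i$ with $i\neq q$ but not in $m_q$ itself, whose multiplicity in $m^{\bd}$ equals $r-d_q$. Using these, I would prove that the induced subcomplex of $\Srq$ on $G^-$ equals $\SS_{q-1}^r$ and that on $G^+$ equals $\SS_q^{r-1}$ (after the shift by $m_q$): the private variable prevents any $G^+$ vertex from dividing an lcm of a $G^-$ subset, and dually the anti-private variable prevents any $G^-$ vertex from dividing an lcm of a $G^+$ subset, so Scarfness transfers faithfully in each direction.

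Next I would apply Bayer--Sturmfels on each side. For $\SS_{q-1}^r$: any lcm $\beta$ of generators of $\E_{q-1}^r$ lacks the private variable of $m_q$, so no vertex of $G^+$ divides $\beta$; hence $\Srq_{\leq\beta}$ coincides with $(\SS_{q-1}^r)_{\leq\beta}$, and acyclicity transfers immediately from the hypothesis. For $\SS_q^{r-1}$: given an lcm $\beta'$ of generators of $\E_q^{r-1}$, consider the multidegree $m_q\beta'$ in $\Erq$. The anti-private variable of $m_q$ appears in $m_q\beta'$ with multiplicity at most $r-1$, because $m_q$ contributes $0$ and each generator of $\E_q^{r-1}$ contributes at most $r-1$; but every $G^-$ vertex has this multiplicity exactly equal to $r$. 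Hence no $G^-$ vertex divides $m_q\beta'$, so $\Srq_{\leq m_q\beta'}$ contains only $G^+$ vertices, and after the shift by $m_q$ it agrees with $(\SS_q^{r-1})_{\leq\beta'}$; acyclicity again transfers from the hypothesis.

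The crux of the argument, and the step whose verification is most delicate, is the identification and use of the private/anti-private pair of variables associated to $m_q$: these are what force the clean separation of $G^+$ and $G^-$ at every multidegree arising from $\E_{q-1}^r$ or from $m_q\cdot\E_q^{r-1}$. Once this separation is in place, the two Bayer--Sturmfels reductions run in parallel and the remainder is routine.
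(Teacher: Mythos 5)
Your proposal is correct and is in substance the same argument as the paper's: the paper's proof also hinges on exactly your two distinguished variables (the ``private'' variable $x_{\{q\}}$, which appears only in $\e_q$, and the ``anti-private'' variable $x_{[q]\smallsetminus\{i\}}$, which appears in every generator except $\e_i$), using them to show that the generators of ${\E_{q-1}}^r$ and of $\e_i\,{\E_q}^{r-1}$ are precisely the generators of $\Erq$ dividing suitable monomials $\m_1,\m_2$. The only difference is packaging: the paper routes the conclusion through the restriction lemma for $I^{\leq \m}$ (\cref{lem:special-HHZ}, via~\cite{HHZ04}) rather than verifying the Bayer--Sturmfels acyclicity condition multidegree by multidegree as you do, but these amount to the same mechanism.
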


In this paper, starting with the first unknown case,
we show that the minimal free resolution of ${\E_q}^3$ comes from its Scarf complex. 
In order to determine this complex,  in~\cref{s:halfspaces,s:Scarf-faces} we use discrete geometry to develop tools 
that, for arbitrary $r$, greatly simplify determining whether or not a given set of vertices forms a face in $\SS_q^r$.  
In fact, the faces can be described as intersections of pairs of parallel half-spaces; see~\cref{t:shift}.  
This description of $\Srq$ has curious connections to  well-known structures in combinatorics such as partitions of integers or Postnikov's permutohedra~\cite{Postnikov} (\cref{r:permutohedra,r:partitions}), and is combinatorially appealing though computationally challenging.  For the specific case of $r=3$, we go on to give a complete description of every facet and minimal nonface of $\SS_q^3$ for every $q$ in \cref{t:minimalnonfaces3,thm:complete-list-of-faces}.

The final tool we need is discrete Morse theory, 
which we will use in \cref{s:morse} to
systematically eliminate the extra faces from the Taylor complex all the way down to $\Sq$, proving that the latter supports a  free resolution.  When the ideal is ${\E_q}^3$, this cell complex is the Scarf complex, so the resolution is necessarily minimal.  In other words, we show the following:

\begin{theorem}[{\bf \cref{t:r=3}}]\label{t:scarfintro}
    For any $q$, the Scarf complex of ${\E_q}^3$, namely $\Sq$, supports a minimal free resolution of ${\E_q}^3$.
\end{theorem}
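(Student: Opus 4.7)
The plan is to apply discrete Morse theory to the Taylor complex $\TT(\E_q^3)$, which trivially supports a (non-minimal) free resolution of $\E_q^3$, and collapse it down to the Scarf complex $\Sq$. The Taylor complex is the full simplex on the $\binom{q}{3}$ vertices corresponding to the minimal generators of $\E_q^3$, and its cellular chain complex is $\ZZ^q$-multigraded by $\lcm$. A face lies in $\Sq$ exactly when its multidegree is attained uniquely among all subsets of generators; hence the problem reduces to constructing an acyclic matching on the face poset of $\TT(\E_q^3)$ whose unmatched (critical) cells are precisely the Scarf faces.

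First, I would invoke the complete description of facets and minimal nonfaces of $\Sq$ supplied by \cref{t:minimalnonfaces3,thm:complete-list-of-faces}, together with the half-space characterization of Scarf faces in \cref{t:shift}. These results give a combinatorial test that determines, for any subset of generators, whether its $\lcm$ is uniquely achieved. For each multidegree $\m$ attained by more than one subset, I would construct a canonical pivot element $g_\m$ (for example, the lex-smallest generator in the non-unique stratum, suitably adjusted to respect the half-space description) so that the rule $F \mapsto F \cup \{g_\m\}$ pairs every non-Scarf face with $\lcm(F) = \m$ while leaving every Scarf face unmatched.

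Verifying acyclicity of this matching is where the main difficulty will lie. The exotic facets first appearing at $r = 3$ (flagged by \cref{p:weird}) and the triangular minimal nonfaces of $\SS_5^3$ (\cref{t:minimalnonfaces3}) mean that a blanket lex rule is unlikely to succeed on its own: the pivot must be chosen so that the local matching on each multidegree stratum is compatible with the affine geometry underlying the half-space description. To control acyclicity globally, I would show that any gradient edge either preserves the $\lcm$ or strictly decreases it under the componentwise partial order on $\NN^q$, which rules out cycles spanning distinct multidegrees. The remaining intra-stratum check reduces to a finite combinatorial argument governed by the classification of faces from \cref{s:Scarf-faces}, and it is here that I expect the most technical casework, since the triangular nonfaces create obstructions to naive matching strategies that worked for $r \leq 2$ or $q \leq 4$.

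Once acyclicity is established, Forman's theorem supplies a CW complex homotopy equivalent to $\TT(\E_q^3)$ whose cells are exactly the critical (Scarf) faces, yielding a free resolution of $\E_q^3$ supported on $\Sq$. Minimality is then automatic: the Scarf complex has no repeated multidegrees, so every nonzero entry of the resulting differential sits in a strictly positive multidegree, hence in the graded maximal ideal.
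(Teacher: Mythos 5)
Your overall strategy --- collapse the Taylor complex onto $\Sq$ by a homogeneous acyclic matching, invoke the Batzies--Welker machinery, and get minimality for free because Scarf multidegrees are distinct --- is the same as the paper's (\cref{t:nice-Morse-resolution} and \cref{t:r=3}). But the stratification you propose is genuinely different, and it is exactly where the gap sits. You partition the non-Scarf cells by their multidegree $\m$ and match each stratum by a single pivot generator $g_\m$. For this to be a matching at all (never mind acyclic), the stratum $Y_\m=\{F \st \lcm(F)=\m\}$ must be closed not only under adding $g_\m$ (which is automatic, since $g_\m \mid \m$) but also under \emph{removing} it: if $F\in Y_\m$ and $g_\m\in F$ but $\lcm(F\smallsetminus\{g_\m\})\ne\m$, then $F$ is left unmatched and becomes a spurious critical cell. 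Equivalently, you need, for every repeated multidegree $\m$ of ${\E_q}^3$, a generator dividing $\m$ that lies in no minimal subset of generators achieving $\lcm=\m$. You assert such a pivot can be ``suitably adjusted,'' but this existence statement is the real content of the theorem and is nowhere established; nothing in \cref{t:shift} or \cref{thm:complete-list-of-faces} hands it to you directly. (Your worry about acyclicity, by contrast, concerns the easy part: a single-pivot matching within each stratum is acyclic by \cref{matching-lemma}, and since the lcm strata respect inclusion, the Cluster Lemma \cref{clusterlemma} rules out cycles across strata.)

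The paper sidesteps precisely this downward-closure problem by stratifying the nonfaces of $\Sq$ not by multidegree but by the $\sq$-largest \emph{minimal nonface} of $\Sq$ that they contain (\cref{c:nice}), with respect to a bespoke total order (\cref{d:total-order}). Under that stratification, closure under removing the pivot $\omega(\sigma)$ is automatic --- since $\omega(\sigma)\notin\sigma$, the defining minimal nonface survives the deletion --- and only closure under \emph{adding} $\omega(\sigma)$ must be checked; that is condition (\#) of \cref{l:max}, verified case by case in \cref{l:r=3} using the classification of minimal nonfaces in \cref{t:minimalnonfaces3}. To salvage your route you would have to prove the pivot-existence statement above for every repeated multidegree of ${\E_q}^3$, which is not obviously true a priori and would require casework comparable to what the paper does with minimal nonfaces.
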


\cref{t:scarfintro} is a positive answer to the first open case of \cref{c:conjecture}. Of particular interest is enumerating the number of faces of $\Srq$, which in turn will provide a sharp upper bound for betti numbers of powers of square-free monomial ideals via \eqref{e:MFR-sharp}. 
When $r<3$, the simplicial complex $\Srq$ is well understood~\cite{L2,extremal}, but $r=3$ is a threshold where new types of faces appear, and seem to persist based on computational evidence; see~\cref{p:weird}. In~\cref{s:counting} we use \cref{t:scarfintro} and straightforward counting arguments  applied to the faces in the Scarf complex of ${\E_q}^3$ to show the following.

\begin{theorem}[{\bf \cref{t:sharpbound3}}]\label{t:sharpboundintro}
    Let $q \geq 5$ be an integer. For every $i \leq \binom{q}{3}$, a polynomial $p_i$ of degree $3i + 2$ is given in \cref{t:sharpbound3} such that for every square-free monomial ideal $I$ generated by $q$ elements
    $$
        \beta_{i}(I^3) \leq \binom{\binom{q}{3}}{i+1} + p_i(q)   \qand \pd(I^3) \leq \binom{q}{3} - 1 $$
    and these bounds are sharp in that equality holds for every $i$ when $I = \E_q$. \end{theorem}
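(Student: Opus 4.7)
The proof combines the paper's two main ingredients. By \cref{t:e-bound}, $\beta_i(I^3) \le \beta_i(\E_q^3)$, and by \cref{t:scarfintro}, $\beta_i(\E_q^3) = f_i(\Sq)$, the number of $i$-dimensional faces of the Scarf complex of $\E_q^3$. The projective dimension claim follows immediately: the facet description in \cref{thm:complete-list-of-faces} pins the maximum facet dimension of $\Sq$ at $\binom{q}{3}-1$, so $\pd(\E_q^3) = \binom{q}{3}-1$ and hence $\pd(I^3) \le \binom{q}{3}-1$ via the inclusion in \eqref{e:MFR-sharp}. The substantive work is therefore to establish the identity
\[
f_i(\Sq) = \binom{\binom{q}{3}}{i+1} + p_i(q)
\]
for a polynomial $p_i$ of degree $3i+2$.

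My plan is to partition the vertices of $\Sq$ (equivalently, the minimal generators of $\E_q^3$) into two families guided by \cref{thm:complete-list-of-faces}: a distinguished $\binom{q}{3}$-element set of \emph{standard} vertices, which I expect to form a full simplex inside $\Sq$ by analogy with the $r\le 2$ case (this matches the formula at $i=0$), together with the remaining \emph{exceptional} vertices arising from the new facet phenomena of \cref{p:weird}. Faces of $\Sq$ contained entirely in the standard vertex set then contribute exactly $\binom{\binom{q}{3}}{i+1}$ to $f_i(\Sq)$; the remaining faces must include at least one exceptional vertex, and I would enumerate these by first fixing the exceptional-vertex data (parametrized by a bounded combinatorial choice together with indices from $\{1,\dots,q\}$), then counting admissible standard-vertex completions, using the nonface description of \cref{t:minimalnonfaces3} to exclude forbidden combinations. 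Summing over exceptional-vertex types and applying inclusion-exclusion to handle overlaps yields $p_i(q)$ as a polynomial in $q$.

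The key verification is that $\deg p_i = 3i+2$. Each exceptional vertex is pinned down by anchor data consuming at least one degree of freedom beyond the three indices that name a standard vertex, so an $i$-face containing at least one exceptional vertex is specified by at most $3i+2$ independent choices from $\{1,\dots,q\}$, one fewer than the $3(i+1)$ choices available to a face sitting entirely in the standard stratum. Summing contributions over the finitely many exceptional types and collecting like powers gives the stated degree. Sharpness for $I=\E_q$ is automatic since $\beta_i(\E_q^3)=f_i(\Sq)$ holds by construction. The main obstacle is the bookkeeping: extracting $p_i(q)$ precisely requires a careful case analysis across \cref{thm:complete-list-of-faces,t:minimalnonfaces3}, disciplined avoidance of double-counting faces spanning multiple exceptional types, and a tight degree count so that $\deg p_i = 3i+2$ is verified rather than merely bounded above.
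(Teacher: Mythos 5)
Your proposal follows essentially the same route as the paper: reduce to $\beta_i(I^3)\le\beta_i({\E_q}^3)=f_i(\Sq)$ via \cref{t:e-bound} and \cref{t:r=3}, read off the projective dimension from the facet list, take the term $\binom{\binom{q}{3}}{i+1}$ from the square-free facet $\U^3_q$ (which is indeed a facet by \cref{thm:complete-list-of-faces}, not merely by analogy), and obtain $p_i(q)$ by enumerating faces containing a non-square-free vertex, stratified by facet type with inclusion-exclusion — exactly the computation of \cref{t:sharpbound3}, with the exact degree $3i+2$ pinned down as in \cref{t:i-bound} by the $s=2$ stratum (one vertex $2\be_a+\be_b$ plus $i$ square-free vertices, contributing $q(q-1)\cdot O(q^{3i})$). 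The plan is sound; what remains is precisely the bookkeeping you identify, which the paper executes via the facet-size table and the Chu--Vandermonde simplification.
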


\begin{remark}[{\bf \cref{t:c-bound} and \cref{t:i-bound}}]
The asymptotic bound
$\binom{\binom{q}{3}}{i+1}$ from \cref{t:sharpboundintro} is a
more than exponential improvement, by a factor of $2^{q^2}$, over the bounds on betti numbers given by the Taylor complex.
\end{remark}

We note that sharp bounds for the cases $r \leq 2$ or $q \leq 4$ were obtained previously by El Khoury et al.~\cite{extremal}, 
and alternative bounds (for every power $r$ and every $q$) to Taylor's bound were provided in~\cite{Lr}. In~\cref{fig:sharpbound3intro}, we compare our bound from \cref{t:scarfintro}, which comes from $\Srq$ and which is sharp, to Taylor's bound and to the bound given in~\cite{Lr}, which comes from a complex denoted $\Lrq$. Due to differences in order of magnitudes of known bounds, we graph the bounds using an exponential scale. Below we compare specific values given by the bounds when $q = 6$.
$$  
\begin{array}{l|r|r|r}
    & \mbox{Taylor's bound} & \mbox{The bound in~\cite{Lr}} & \mbox{Our bound (\cref{t:sharpboundintro}}) \\ 
   \hline
        \beta_2(I^3) & \num{27720} & \num{19660} & \num{4710} \\
        \beta_3(I^3) & \num{367290} & \num{230360} & \num{19845}\\
        \beta_4(I^3) & \num{3819816}&  \num{2118790} & \num{58530}  \\
        \beta_{20}(I^3) & \num{1 346766106565880} & \num{67327446062800} & 0 \\
        \beta_{40}(I^3) & \num{41648951840265} & \num{10272278170} & 0
    \end{array}
$$
{
\begin{figure}[ht!]
    \centering
    \includegraphics[scale = 0.4]{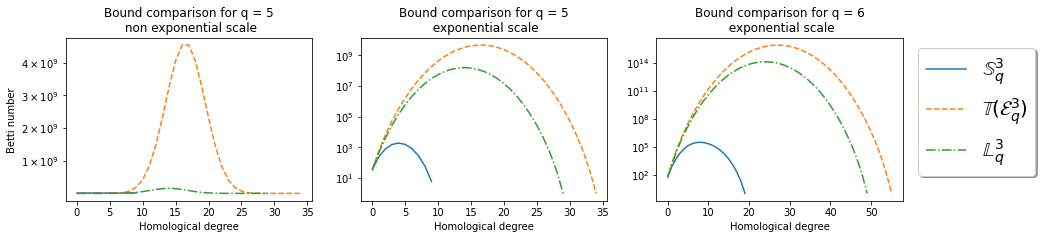}
    \caption{A comparison between known bounds for Betti numbers of the third power. The $y$-axis represents the value of the Betti number, while the $x$-axis represents homological degree. In the non exponential scale, our bound is not visible as a consequence of the difference in orders of magnitude. Moreover, graphs for higher $q$ look extremely similar because of the asymptotic behaviour of the three bounds.}
    \label{fig:sharpbound3intro}
\end{figure}
} 

 The graphs in~\cref{fig:sharpbound3intro} illustrate that our bounds are exponentially better than Taylor's or the bounds in~\cite{Lr}.  
Our results lead us to the following questions, which can be seen as a strengthening of the original question asked in~\cite{extremal}.

\begin{question}\label{q:dominantterm}
    Let $q \gg 0$ and $r$ be a fixed positive integer.
    \begin{enumerate}
        \item\label{i:pd-bound} Does a square-free monomial ideal $I$ generated by $q \gg 0$ elements satisfy
        $
            \pd(I^r) \leq \binom{q}{r}-1?
        $
       Is the dimension of $\Srq$ equal to $\binom{q}{r} - 1$?
        \item\label{i:betti-bound} For every $i$, is there a (possibily zero) polynomial $p_i(q)$ such that for every square-free monomial ideal $I$ generated by $q$ elements: 
        $$
            \beta_i(I^r) \leq \binom{\binom{q}{r}}{i+1} + p_i(q)
        $$
      and for every $0 \leq c < 1$, if $i = c\binom{q}{r}$ then $
         \displaystyle   \lim_{q \rightarrow \infty} \frac{p_i(q)}{\binom{\binom{q}r}{i+1}} = 0?
        $
           \end{enumerate}
\end{question}

We note that even though our original motivation is to study bounds on betti numbers of powers of square-free monomial ideals, results in this paper can be stated from different perspectives. As an example, due to results in~\cite{extremal}, the definition of the Scarf complex of $\Erq$ can be given in a completely combinatorial language, in terms of a series of integer programs. Since the bounds in~\cref{t:sharpboundintro} come from the number of faces of the Scarf complex of ${\E_q}^3$, one can state \cref{t:sharpboundintro} as a computation of the $f$-vector of a simplicial complex defined via sets of lattice points that satisfy different sets of inequalities. For an exact statement of this form see~\cref{s:halfspaces}. We further note that in view of the importance of these Scarf complexes in Commutative Algebra, a systematic study of the combinatorial properties of such complexes could have deep consequences to the study of free resolutions of powers of square-free monomial ideals. 

Finally, we note that it is likely that bounds for arbitrary monomial ideals need to be larger than the ones from~\cref{t:sharpbound3}. For results regarding bounds for arbitrary monomial ideals, see~\cite{Hasan}.

The paper is organized as follows. In~\cref{s:background} we provide  a brief introduction to multigraded resolutions,  including the Scarf complex, and we introduce the $q$-extremal ideal $\E_q$.  We also highlight the doubly inductive nature of the questions in this paper by proving \cref{t:inductive-intro}. This indicates that as the power or number of generators grows, so does the complexity of the problem. In~\cref{s:halfspaces}, the connections of this work to discrete geometry are explored.  Combining discrete geometry with the inductive nature of the question, a recursive means of characterizing faces of the Scarf complex of $\Erq$ is provided in~\cref{s:Scarf-faces}. \cref{s:S3q} focuses on the case $r=3$, with all faces of $\Sq$ being completely described in the section. \cref{s:morse} develops tools using Morse theory, along with a specific order, that lead to a proof in~\cref{s:Eq3-Scarf} that ${\E_q}^3$ is Scarf for all $q$. Using this result,~\cref{s:counting} provides specific sharp bounds on the betti numbers of $I^3$ where $I$ is any square-free monomial ideal.

\begin{Acknowledgments} This project started in the Interactions Between Topological Combinatorics and Combinatorial Commutative Algebra workshop hosted by the Banff International Research Station in Spring 2023, and was completed  at the Simons Laufer Mathematical Sciences Institute in Berkeley during a two week visit in Summer 2024.  We would like to thank  BIRS and SLMath for providing us with wonderful facilities leading to productive stays. 

This material is based upon work supported by the National Science Foundation under Grant No. DMS-1928930, while the authors were in residence at the Mathematical Sciences Research Institute in Berkeley, California, during Summer 2024. Faridi's research is supported by
NSERC Discovery Grant 2023-05929. Chau's research is supported by an Infosys fellowship, and the NSF grants DMS 2001368, 1801285, and
 2101671. 
Duval's research is supported by Simons Collaboration Grant 516801. 
Our research was assisted by 
computations carried out using the computer algebra software Macaulay2~\cite{M2} and Sage~\cite{sage}. 
\end{Acknowledgments}

\section{Extremal ideals}\label{s:background}

 We start with some general background on multigraded free resolutions.
If $S=\sfk[x_1,\ldots,x_n]$ is the polynomial ring over a field $\sfk$, then a free resolution of module $S/I$ is an exact sequence of the form  
$$0\to S^{c_t} \to S^{c_{q-1}} \to \cdots \to S^{c_1} \stackrel{d}{\to} S $$
where the image of $d$ is $I$.  Thus the augmented sequence using the cokernel of $d$, which is $S/I$, is also exact. A modification of this is to replace the final free module $S$ by $I$, making $d$ surjective, and view the remainder of the sequence as a resolution of $I$. This approach simplifies numbering later on and so will be used throughout this paper. The integer $c_i$ is the rank of the $i^{th}$ free module in the resolution, which is the size of a (not necessarily minimal) generating set for the ideal of $i^{th}$ syzygies. While for a given set of monomials $m_1,\ldots,m_q$ (which is the purview of this study) $I=(m_1, \ldots ,m_q)$ has many different resolutions, the {\it minimal} free resolution, that is the resolution with smallest length and smallest values of the ranks $c_i$, is unique up to an isomorphism of complexes, and has the form
$$0\to S^{\beta_p} \to S^{\beta_{p-1}} \to \cdots \to S^{\beta_1} \to S^{\beta_0} \to I \to 0.$$
The positive integers $\beta_0, \ldots, \beta_p$ are called the {\bf betti numbers} of the ideal $I=(m_1,\ldots,m_q)$ of $S$. The length $p$ of the minimal free resolution is called the {\bf projective dimension} of $I$. The uniqueness of the minimal free resolution makes the projective dimension and the betti numbers algebraic invariants of the ideal.

Taylor \cite{T} showed one could \say{homogenize} the simplicial chain complex of a simplex to obtain a free resolution.  The homogenization process works as follows:

\begin{description}
    \item[Step 1] Label each vertex $i$ of the simplex with the monomial $m_i$.
    \item[Step 2] Label each face $\sigma$ of the simplex with the monomial $\m_\sigma$ which is the least common multiple ($\lcm$) of the monomial labels of its vertices. This labeled simplex is called the \textbf{Taylor simplex} of $I$, denoted by $\TT(I)$.
    \item[Step 3] In the simplicial chain complex of the simplex (which is acyclic as a simplex is acyclic), replace each chain group by a free $S$-module of the same rank, and \say{homogenize} each boundary map by giving it coefficients which are quotients of the labels of each face and subface. Since we will not need the details of this construction for our purposes, we refer the interested reader to the references, for example~\cite{Lr}, and \cref{e:running}.
\end{description}

The resulting exact sequence of free $S$-modules is then a free resolution of the ideal $I$.
Bayer and Sturmfels~\cite{BS} generalized Taylor's work, providing criteria for when a cell complex $\mathcal{X}$ {\bf supports} a free resolution of a monomial ideal $I$, in the sense that the cellular chain complex of $\mathcal{X}$ can be homogenized to a free resolution of $I$.

Taylor's resolution is powerful as it works for {\it any} monomial ideal, and as such, provides binomial bounds for betti numbers of monomial ideals: each 
$\beta_i$ would be bounded by the number of $i$-faces for a simplex.
However, though the Taylor resolution is minimal for select ideals, hence providing a {\it sharp bound} on betti numbers, it is too large for most ideals. 

At the other extreme, every cell complex that supports a free resolution of an ideal must contain the Scarf complex of that ideal~\cite[Theorem 59.2]{P}.
To define the Scarf complex, we first introduce some notation. Let $\Delta$ be the labeled Taylor complex corresponding to $I = (m_1, \ldots,m_q)$. Let $\sigma \subseteq \{1, \ldots,q\}$ be a face of $\Delta$ with monomial label $\m_{\sigma} = \lcm (m_i \mid i \in \sigma) \in \LCM(I)$, where $\LCM(I)$ denotes the {\bf lcm lattice} of $I$, i.e., the set of least common multiples of the monomials in the minimal generating set of $I$, ordered by divisibility. Then the {\bf Scarf complex} of $I$ is the labeled subcomplex $\Gamma$ of $\Delta$ given by
$$ \Gamma = \{ \sigma \in \Delta \mid \m_{\sigma} \neq \m_{\tau} \,\, {\mbox{\rm for all }} \tau \in \Delta \,\, {\mbox{\rm with }} \tau \ne \sigma\}.$$
In other words, the Scarf complex is precisely the subcomplex of the Taylor complex consisting of all faces with monomial labels that are unique in that no other face of the Taylor complex shares the label.

\begin{example}\label{e:running} Let $I = (xy, yz, zu)$ be a monomial ideal in the polynomial ring $S=\sfk[x,y,z,u]$.  In this case 
$m_1=xy$, $m_2=yz$ and $m_3=zu$. The Taylor complex is the labeled simplex on the left in \cref{f:Taylor}, 
\begin{figure}[ht!]
\begin{center}
\begin{tabular}{cc}
\begin{tikzpicture}[scale=1]
\tikzstyle{point}=[inner sep=0pt]
\node (a)at (0,1) {};
\node (b) at (-1,0) {};
\node (c) at (1,0) {};
\draw [fill=gray!20](a.center) -- (b.center) -- (c.center);
\draw (a.center) -- (b.center);
\draw (a.center) -- (c.center);
\draw (b.center) -- (c.center);
\node [point,label=below:{{\small $xyzu$}}] at (0,.65) {};
\node [point,label=above:{\small $xy$}] at (0,1) {};
\node [point,label=left:{\small $yz$}] at (-1,0) {};
\node [point,label=right:{\small $zu$}] at (1,0) {};
\node [point,label=right:{{\small $xyzu$}}] at (.5,.6) {};
\node [point,label=left:{{\small $xyz$}}] at (-.5,.6) {};
\node [point,label=below:{{\small $yzu$}}] at (0,0) {};

\draw[black, fill=black] (a) circle(.05);
\draw[black, fill=black] (b) circle(.05);
\draw[black, fill=black] (c) circle(.05);
\end{tikzpicture}
\qquad \qquad & \qquad \qquad 
\begin{tikzpicture}[scale=1]
\tikzstyle{point}=[inner sep=0pt]
\node (a)at (0,1) {};
\node (b) at (-1,0) {};
\node (c) at (1,0) {};
\draw (a.center) -- (b.center);
\draw (b.center) -- (c.center);
\node [point,label=above:{\small $xy$}] at (0,1) {};
\node [point,label=left:{\small $yz$}] at (-1,0) {};
\node [point,label=right:{\small $zu$}] at (1,0) {};
\node [point,label=left:{{\small $xyz$}}] at (-.5,.6) {};
\node [point,label=below:{{\small $yzu$}}] at (0,0) {};
\draw[black, fill=black] (a) circle(.05);
\draw[black, fill=black] (b) circle(.05);
\draw[black, fill=black] (c) circle(.05);
\end{tikzpicture}\\
Taylor complex of $I$  
\qquad \qquad & \qquad \qquad 
Scarf complex of $I$
\end{tabular}
\end{center}
\caption{Complexes supporting a free resolution of $I=(xy,yz,zu)$}\label{f:Taylor}
\end{figure}
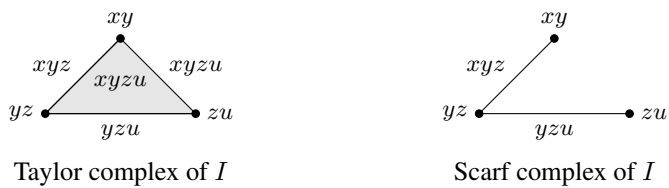
and it supports a free resolution of $I$. Now homogenizing the  simplicial chain complex
of a simplex with these labels results in a {\it multigraded} free resolution below, known as the {\bf Taylor resolution} of $I$, 
$$ 0 \longrightarrow \begin{smallmatrix} S(xyzu)\\ \end{smallmatrix}
\xrightarrow{\scriptsize \begin{bmatrix}u\\x\\-1\end{bmatrix}} 
\begin{smallmatrix}
	S(xyz)\\ \oplus \\ 
	S(yzu) \\ \oplus\\ 
    S(xyzu)\\
\end{smallmatrix}
\xrightarrow{\scriptsize \begin{bmatrix}z&0&zu\\-x&u&0\\0&-y&-xy\end{bmatrix}}
\begin{smallmatrix}
	S(xy)\\ \oplus \\ 
	S(yz) \\ \oplus\\ 
    S(zu)\\ 
\end{smallmatrix} 
\xrightarrow{\scriptsize \begin{bmatrix}xy & yz & zu\end{bmatrix}}
\begin{smallmatrix} I \\ \end{smallmatrix}
\longrightarrow 0 
\,,$$
where the notation $S(x^ay^bz^cu^d)$ refers to the $S$-free module with {\bf multi-degree} $(a,b,c,d)$, or simply $x^ay^bz^cu^d$.  The multigrading is a refinement of the usual (degree) grading. In other words, the Taylor resolution above is a regraded version of $0 \to S \to S^3 \to S^3 \to I \to 0$, where the new grading helps keep track of which face of the simplicial complex is contributing which part of the resolution.

The Scarf complex is obtained by removing the two faces with matching labels, $xyzu$. 
It is shown on the right in~\cref{f:Taylor}. 
The Scarf complex supports a minimal free resolution of $I$, which is the Taylor resolution with the two copies of $S(xyzu)$ removed.
\end{example}

 As every free resolution contains a minimal one, the Taylor resolution contains a minimal multigraded free resolution of the ideal. It follows that  each betti number $\beta_i(I)$ can be written as a sum of {\bf multigraded betti numbers} $\beta_{i,\m}(I)$, where for a monomial $\m$,  the multigraded betti number $\beta_{i,\m}(I)$ denotes the number of copies of $S(\m)$ in the $i^{th}$ homological degree in the multigraded minimal free resolution of $I$. In particular, if $\beta_{i,\m}(I) \neq 0$ then $\m \in \LCM(I)$.

When we take powers of monomial ideals, the number of faces of the Taylor complex that have monomial labels matching that of a subface increases drastically. To observe this phenomenon, consider a monomial $I$ with two minimal generators $m_1,m_2$. Then $I^2=(m_1^2,m_2^2,m_1m_2)$, and then $\TT(I^2)$ will be a triangle (simplex on $3$ vertices).  No matter what the monomials $m_1$ and $m_2$ are, the monomial label of the $2$-face of the triangle matches that of an edge, because $\lcm(m_1^2,m_2^2,m_1m_2)=\lcm(m_1^2,m_2^2)$. In other words, the Taylor resolution is never minimal for $I^2$ when $I$ has two or more generators, and it becomes further from minimal as the powers and the number of generators of $I$ grow.

By~\cref{t:e-bound}, this problem will be worst for powers of the class of ideals $\E_q$ called {\it extremal ideals} (\cite{Lr}).  Following \cite{extremal}, we define for each integer $q$ the {\bf $q$-extremal ideal}, denoted by $\E_q$, as follows.

\begin{definition}\label{d:Eq}
Fix an integer $q \geq 1$. The {\bf $q$-extremal ideal} $\E_q$ is defined to be the ideal of the polynomial ring $\MS_q = \sfk[x_A\colon \emptyset \neq A\subseteq [q]]$ generated by $\{\e_i\}_{i\in [q]}$ where for each $i\in [q]$
\[
\e_i \coloneqq \prod_{i\in A \subseteq [q]} x_A.
\]
\end{definition}

To illustrate this definition, the following example lists the generators of $\E_q$ when $q=4$.

\begin{example} For $q=4$, the ideal $\E_4$ is generated by the  monomials
  $$\begin{array}{ll} \e_1&=x_1 x_{12} x_{13} x_{14} x_{123} x_{124}            
    x_{134}  x_{1234}  ; \\ \e_2&=x_{2} x_{12} x_{23}                           
    x_{24} x_{123} x_{124} x_{234} x_{1234}                                     
    ;\\ \e_3&=x_{3} x_{13} x_{23} x_{34} x_{123} x_{134} x_{234}                
    x_{1234} ;\\ \e_4&=x_{4} x_{14} x_{24} x_{34}                               
    x_{124} x_{134} x_{234} x_{1234} \\                                         
  \end{array}$$
in the polynomial ring $\MS_4 =\sfk[x_1, x_2, x_3, x_4, x_{12}, x_{13}, x_{14}, x_{23}\
, x_{24}, x_{34}, x_{123}, x_{124}, x_{134}, x_{234}, x_{1234}]$.
\end{example}

Writing $\pmea={\e_1}^{a_1}\cdots {\e_q}^{a_q}$, the generators of $\Erq = (\E_q)^r$ are $\{ \pmea \mid a_1 + \cdots + a_q = r, a_i \geq 0\}$ where $\ba = (a_1, \ldots, a_q).$ Note that in the Taylor complex associated to $\Erq$ the monomial $\pmea$ is the label associated to the vertex $\ba.$

It was conjectured \cite{extremal} that $\Erq$ has a  Scarf resolution for any $r$ and $q$. Our first aim is to demonstrate  that as either $r$ or $q$ grows, the difficulty level of this problem can only increase: \cref{t:decrease powers} implies  that if $\Erq$ is Scarf for some $r$ and $q$, then so is ${\E_t}^s$ for all $t\leq q$ and all $s \leq r$. 

For a monomial ideal $I$ and a monomial $\m$, let $I^{\leq \m}$ denote the ideal generated by the monomials in the minimal generating set of $I$ which divide $\m$.  The following is an analog of~\cite[Lemma 2.14]{CHM24} for Scarf complexes, and generalizes a result of Faridi-H\`{a}-Hibi-Morey~\cite{FHHM24}. 

\begin{lemma}\label{lem:special-HHZ}
    If a monomial ideal $I$ is Scarf, then so is $I^{\leq \m}$ for any monomial $\m$.
\end{lemma}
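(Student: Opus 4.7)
My plan is to identify the Scarf complex of $I^{\leq \m}$ with an induced subcomplex of $\Scarf(I)$, and then invoke a standard restriction principle (the ``$\leq \m$'' truncation of a supported resolution) to transfer the Scarf property.

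\textbf{Step 1: Set-up.} Let $J = I^{\leq \m}$, and let $V_J \subseteq V_I$ denote the set of vertices in $\TT(I)$ corresponding to minimal generators $m_i$ of $I$ with $m_i \mid \m$; these are exactly the minimal generators of $J$. Note that $\TT(J)$ is the full simplex on $V_J$, which is the induced subcomplex of $\TT(I)$ on $V_J$, with identical face labels.

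\textbf{Step 2: Identify $\Scarf(J)$ as an induced subcomplex of $\Scarf(I)$.} I claim that
\[
\Scarf(J) \;=\; \Scarf(I)\big|_{V_J} \;=\; \{\sigma \in \Scarf(I) \st \m_\sigma \mid \m\}.
\]
The equality of the last two sets is immediate: $\sigma \subseteq V_J$ iff every vertex of $\sigma$ corresponds to a divisor of $\m$ iff $\m_\sigma \mid \m$. For the main equality, first note that if $\sigma \in \Scarf(I)$ and $\sigma \subseteq V_J$, then the uniqueness of $\m_\sigma$ among all faces of $\TT(I)$ trivially implies uniqueness among faces of $\TT(J) \subseteq \TT(I)$, so $\sigma \in \Scarf(J)$. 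Conversely, if $\sigma \in \Scarf(J)$ but $\sigma \notin \Scarf(I)$, then there exists $\tau \in \TT(I)$ with $\tau \neq \sigma$ and $\m_\tau = \m_\sigma$; but then $\m_\tau = \m_\sigma \mid \m$, which forces every vertex of $\tau$ into $V_J$, so $\tau \in \TT(J)$, contradicting $\sigma \in \Scarf(J)$.

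\textbf{Step 3: Transfer the resolution.} Here I invoke the well-known restriction principle for cellular resolutions (due to Bayer--Sturmfels, cf.\ Miller--Sturmfels): if a labeled cell complex $X$ supports a free resolution of a monomial ideal $I$, then the subcomplex $X^{\leq \m} = \{\sigma \in X \st \m_\sigma \mid \m\}$ supports a free resolution of $I^{\leq \m}$. Applying this with $X = \Scarf(I)$, which by hypothesis supports a (minimal) free resolution of $I$, yields that $\Scarf(I)^{\leq \m}$ supports a free resolution of $J$. By Step~2 this subcomplex equals $\Scarf(J)$.

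\textbf{Step 4: Minimality.} Any resolution supported on a Scarf complex is automatically minimal, because distinct faces of the Scarf complex carry distinct monomial labels, so for each pair $\tau \subsetneq \sigma$ in $\Scarf(J)$ the label $\m_\tau$ properly divides $\m_\sigma$, forcing the homogenized boundary coefficients to lie in the maximal ideal. Thus $\Scarf(J)$ supports a minimal free resolution of $J$, i.e., $J$ is Scarf.

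\textbf{Anticipated obstacle.} The only genuinely substantive step is Step~2, because it is a priori plausible that a face $\sigma \subseteq V_J$ could have a duplicated label within $\TT(I)$ that disappears upon restricting to $\TT(J)$; the divisibility argument $\m_\tau = \m_\sigma \mid \m \Rightarrow \tau \subseteq V_J$ is what rules this out. Step~3 is then a routine appeal to the standard restriction result, and Step~4 is a general fact about Scarf-supported resolutions.
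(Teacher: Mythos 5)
Your proof is correct and follows essentially the same route as the paper: restrict the Scarf resolution of $I$ to multidegrees dividing $\m$ via the standard truncation lemma (the paper cites \cite[Lemma~4.4]{HHZ04} where you cite the Bayer--Sturmfels cellular restriction), and then identify the Scarf faces of $I^{\leq\m}$ with the Scarf faces of $I$ whose labels divide $\m$, exactly as in your Step~2. The divisibility argument you flag as the substantive point ($\m_\tau=\m_\sigma\mid\m$ forces $\tau\subseteq V_J$) is precisely the observation the paper relies on.
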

\begin{proof}
    In order to show that $I^{\leq \m}$ is Scarf, we construct a Scarf resolution of $I^{\leq \m}$. Let $\mathbb{F}$ be the Scarf resolution of $I$. By~\cite[Lemma~4.4]{HHZ04}, the complex $\mathbb{F}^{\leq \m}$ obtained by only keeping multidegrees of $\mathbb{F}$ which divide $\m$ is a resolution of $I^{\leq \m}$.
      It follows from the definition of Scarf multidegrees that  a Scarf multidegree  of $I$ that divides $\m$ is a Scarf multidegree of $I^{\leq \m}$. Since a Scarf multidegree of $I^{\leq \m}$ is a Scarf multidegree of $I$ that divides $\m$, the result follows.
\end{proof}

\begin{theorem}\label{t:decrease powers} For positive integers  $r$ and $q$, if $\Erq \subset \MS_q$ is Scarf, then so are $\E_{q-1}^r$ and ${\E_q}^{r-1}$. In other words, if $\Srq$ supports a resolution, then so do $\SS_{q-1}^r$ and $\SS_q^{r-1}$.
\end{theorem}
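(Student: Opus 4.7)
The plan is to apply \cref{lem:special-HHZ} twice, with two carefully chosen monomials $\m$ so that the ideal $(\Erq)^{\leq \m}$ has the same lcm lattice as the target ideal, hence the same labeled Scarf complex. Throughout I will use the principle that two monomial ideals whose minimal generators are in lcm-preserving bijection have isomorphic lcm lattices, so via the Bayer--Sturmfels acyclicity criterion they are Scarf together or not at all, even when they live in different polynomial rings.

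For ${\E_q}^{r-1}$, take
$$
\m_2 \;=\; x_{[q]\sm\{1\}}^{\,r-1}\cdot \prod_{A \subseteq [q],\, A \neq [q]\sm\{1\}} x_A^{\,r}.
$$
Since the exponent of $x_{[q]\sm\{1\}}$ in $\pmea$ equals $\sum_{i\neq 1} a_i = r - a_1$, we have $\pmea \mid \m_2$ if and only if $a_1 \geq 1$, giving $(\Erq)^{\leq \m_2} = \e_1 \cdot {\E_q}^{r-1}$. Multiplying every minimal generator by the common factor $\e_1$ leaves the lcm lattice unchanged, so $\e_1 \cdot {\E_q}^{r-1}$ is Scarf iff ${\E_q}^{r-1}$ is; combined with \cref{lem:special-HHZ}, this finishes the first case.

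For $\E_{q-1}^r$, take $\m_1 = \prod_{A\subseteq[q],\, A\neq\{q\}} x_A^{\,r}$. Since $x_{\{q\}}$ appears in $\pmea$ with exponent exactly $a_q$, we have $\pmea \mid \m_1$ iff $a_q = 0$, so $J := (\Erq)^{\leq \m_1}$ has minimal generators $\{\pmea : |\ba|=r,\ a_q=0\}$, in natural bijection with the minimal generators of $\E_{q-1}^r$. Letting $\psi : \MS_{q-1} \to \MS_q$ be the ring homomorphism defined by $x_B \mapsto x_{B \cup \{q\}}$, a direct expansion shows that each such generator factors as $\pmea = m_\ba' \cdot \psi(m_\ba')$, where $m_\ba'$ is the corresponding minimal generator of $\E_{q-1}^r \subset \MS_{q-1}$, and the two factors $m_\ba'$ and $\psi(m_\ba')$ lie in disjoint variable sets of $\MS_q$. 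Since $\psi$ is injective on monomials, the map $m_\ba' \mapsto m_\ba' \cdot \psi(m_\ba')$ is an lcm-preserving bijection between the minimal generating sets of $\E_{q-1}^r$ and $J$.

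The delicate point, and where the argument requires care, is that $J$ and $\E_{q-1}^r$ live in different polynomial rings, so the equivalence of ``supporting a resolution'' must be verified directly. I would do this multidegree by multidegree via Bayer--Sturmfels: for any monomial $\n' \in \MS_q$, written as $\n' = \n_1' \cdot \n_2'$ according to whether variables involve $q$ or not, the subcomplex of $\Scarf(J)$ of faces with labels dividing $\n'$ coincides with the subcomplex of $\Scarf(\E_{q-1}^r)$ of faces with labels dividing $\gcd(\n_1',\psi^{-1}(\n_2'))$; conversely every $\n \in \MS_{q-1}$ arises this way by taking $\n' = \n \cdot \psi(\n)$. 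This gives equivalent acyclicity at every multidegree, and \cref{lem:special-HHZ} applied to $\Erq$ with $\m_1$ then yields that $\E_{q-1}^r$ is Scarf.
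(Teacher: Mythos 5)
Your proposal is correct and follows essentially the same route as the paper's proof: restrict $\Erq$ to the generators dividing a suitably chosen monomial, apply \cref{lem:special-HHZ}, and identify the resulting ideals with $\e_1{\E_q}^{r-1}$ (then strip the common factor $\e_1$) and with ${\E_{q-1}}^r$ up to an isomorphism of lcm lattices. Your $\m_2$ differs from the paper's choice but cuts out the same ideal, and your explicit lcm-preserving bijection $m'_{\ba}\mapsto m'_{\ba}\cdot\psi(m'_{\ba})$ together with the multidegree-by-multidegree acyclicity check makes rigorous the change of rings that the paper's proof handles only by asserting the identification $(\Erq)^{\leq \m_1}={\E_{q-1}}^r\MS_q$.
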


\begin{proof}  For positive integers $q$ and $r$, let
    $$\m_1=\prod_{\tiny \begin{array}{l}A \subseteq [q]\\ A \neq \emptyset, \{q\} \end{array}} {x_A}^r \qand 
      \m_2= \epsilon_1 \prod_{\tiny \begin{array}{l}A \subseteq [q]\\ A \neq \emptyset, [q] \end{array}} {x_A}^{r-1}.$$
      Then, as ideals of $\MS_q$,
      \begin{enumerate}
          \item $(\Erq)^{\leq \m_1} = (\epsilon_1,\dots, \epsilon_{q-1})^r={\E_{q-1}}^r\MS_q$, because a generator of $\Erq$ that divides $\m_1$ cannot be divisible by $\e_q$. Since $\e_q$ is the only generator of $\E_q$ divisible by $x_{\{q\}}$, the converse holds, meaning every generator of $\Erq$ that is not divisible by $\e_q$ divides $\m_1$.
          
          \item $(\Erq)^{\leq \m_2} = \epsilon_1 {\E_q}^{r-1}$ because the generators of $\epsilon_1 {\E_q}^{r-1}$ divide $\m_2$; conversely, if $\mu=\e_1^{a_1}\cdots \e_q^{a_q}$ is a generator of $\Erq$, then set $A=\{2,\ldots,q\}$ and note that $x_A \mid \e_j$ for $j\geq 2$. Then if $\mu \mid \m_2$, examining the power of $x_A$ yields $\sum_{j=2}^q a_j \leq r-1$ so $a_1 \geq 1$ and $\mu \in \e_1 {\E_q}^{r-1}$ as desired.
      \end{enumerate}

By \cref{lem:special-HHZ},
    $ \epsilon_1 {\E_q}^{r-1}$ and ${\E_{q-1}}^r\MS_q$ are both Scarf ideals of the ring $\MS_q$. 
   Since removing a common factor in every generator does not affect the Scarf property, we conclude ${\E_q}^{r-1}$ is a Scarf ideal of $\MS_q$.   

   To show that ${\E_{q-1}}^r$ is a Scarf ideals of $\MS_{q-1}$, note that the lcm lattices of  ${\E_{q-1}}^r$ as an ideal of $\MS_q$ and as an ideal of $\MS_{q-1}$ are identical, as they only depend on the minimal generating set of the ideal.  Hence the multigraded Taylor resolution and Scarf complexes are the same too. It follows that if ${\E_{q-1}}^r$ has a Scarf resolution over $\MS_q$, then it also has a Scarf
    resolution over $\MS_{q-1}$.
   \end{proof}   

 It is known that when $q\leq 4$ or $r\leq 2$, the Scarf complex $\Srq$ supports a minimal free resolution of $\Erq$ by \cite{extremal}.  \cref{t:r=3} in this paper proves the case $r=3$. These results rely on a full characterization of the Scarf complex $\Srq$ in the aforementioned cases. \cref{t:decrease powers} suggests that finding such complete descriptions of $\Srq$ will become increasingly difficult as $q$ or $r$ increases. 
For an illustration of how the complexity can increase, note that when $q\leq 4$ or $r\leq 2$, the minimal nonfaces of $\Srq$ are edges \cite{extremal}, and when $r=3$, minimal nonfaces include a triangle, as will be seen in \cref{t:minimalnonfaces3}, and the list of facets when $r=3$ includes the appearance of a new type of facet (\cref{p:weird}). Based on \cref{t:decrease powers}, it is expected that as $r$ grows, the number of classes of facets will also increase.

\section{Realizing the Scarf complex $\Erq$ through polytopes}\label{s:halfspaces}

In this section, we expand on a result from \cite{extremal} to provide a description of the Scarf complex $\Erq$ in terms of convex polytopes and to develop new tools to identify the faces of $\Srq.$ 
We will directly translate \cite[Proposition 4.2]{extremal} to a recursive condition in the language of polytopes and integer programming. Before doing so, we collect some notation that will be used throughout the paper.
\begin{notation}\label{n:setup}
For $q$ and $r$ positive integers, $\ba=(a_1,\ldots,a_q) \in \NN^q$, and $A \subseteq [q],$  we use the following notation:
 \begin{itemize} 
 \item $|\ba|=a_1+ a_2 + \cdots + a_q;$ 
 \item $\bj = (1,1,\dots,1)$ is the vector in $\RR^q$ with entries all equal to $1$;
 \item ${\bf 0} = (0,0,\ldots,0)$ is the vector in $\RR^q$ with entries all equal to $0$;
 \item $\be_i$ is the $i^{th}$ standard unit vector in $\RR^q$;
 \item $\be_A = \sum_{i \in A} \be_i$ is the characteristic vector of $A$;
 \item $\overline{A}$ is the complement of $A$ in $[q]$;
 \item $\Nrq = \{\ba \in \NN^q \st |\ba| =r\}$;  
 \item $\Trq = \TT(\Erq)$;
 \item $\Hrq$ is the hyperplane of ${\mathbb R^q}$ with normal vector $\bj$ cut out by the equation 
\[
{\bf x} \cdot \bj = x_1+ \cdots + x_q = r.
\]
 \end{itemize}
\end{notation}

Using this notation, we have
$\ba \cdot \be_A = \sum_{i\in A} a_i$ and $\ba \cdot \bj  = |\ba|.$
Note that $\Nrq$ is the set of lattice points in the first octant of $\Hrq$.  It is also precisely the set of vectors $\ba$ such that $\e^\ba \in \Erq$, which is a minimal set of generators for $\Erq$ by \cite[Proposition~7.3]{Lr}. For this reason we can denote the vertices of $\Trq$ by elements of $\Nrq$.
Restating \cite[Proposition~4.2]{extremal} using this notation, we have: 

\begin{theorem}[{\bf The faces of $\Srq$ (see \cite[Proposition~4.2]{extremal})}]\label{t:shift}
    Let $\sigma = \{{\ba_1}, \dots, {\ba_d}\}$ be a $d$-dimensional face of $\Trq$.  Then $\sigma \in \Srq$ if and only if 
    \begin{enumerate}
        \item\label{i:subsets} $\sigma'\in \Srq$ for all proper subsets $\sigma'$ of $\sigma$; and 
        \item\label{i:first-inequality} ${\ba_1}, \dots, {\ba_d}$ are the only solutions $\bw \in \Nrq$ to the system of inequalities
        \begin{equation}
        \label{e:A}
            \bw \cdot \be_A \leq \max_i \{\ba_i \cdot \be_A\} 
        \end{equation}
        where $A$ ranges over all  $A\subseteq [q]$. 
    \end{enumerate}
\end{theorem}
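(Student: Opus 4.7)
The plan is to translate the Scarf condition into a divisibility statement about monomials, and then translate divisibility into the system of linear inequalities. I would begin by computing the label of $\sigma$ explicitly. Since
\[
\pme^{\ba} \;=\; \prod_{i=1}^q \e_i^{a_i} \;=\; \prod_{\emptyset \ne A \subseteq [q]} x_A^{\ba \cdot \be_A},
\]
the least common multiple over a face $\sigma = \{\ba_1,\dots,\ba_d\}$ is
\[
\m_\sigma \;=\; \prod_{\emptyset \ne A \subseteq [q]} x_A^{\max_i \ba_i \cdot \be_A}.
\]
Reading off exponents, for any $\bw \in \Nrq$ the divisibility $\pme^{\bw} \mid \m_\sigma$ is equivalent to $\bw \cdot \be_A \le \max_i \ba_i \cdot \be_A$ for every $\emptyset \ne A \subseteq [q]$ (the case $A = \emptyset$ is vacuous). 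This dictionary between divisibility and the inequality system \eqref{e:A} is the heart of the proof.

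For the forward direction, assume $\sigma \in \Srq$. Condition (1) holds because the Scarf complex is a simplicial complex, so every subface of a Scarf face is itself Scarf; I would cite this standard fact, or give the short lcm-lattice argument. For condition (2), suppose some $\bw \in \Nrq \setminus \sigma$ solved the inequalities. Then $\pme^{\bw} \mid \m_\sigma$, and the face $\tau := \sigma \cup \{\bw\}$ of $\Trq$ would satisfy $\m_\tau = \m_\sigma$ while $\tau \ne \sigma$, contradicting the uniqueness of the label of a Scarf face.

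For the converse, assume (1) and (2) hold and suppose, for contradiction, that some $\tau \in \Trq$ with $\tau \ne \sigma$ satisfies $\m_\tau = \m_\sigma$. If $\tau \not\subseteq \sigma$, pick $\bw \in \tau \setminus \sigma$: then $\pme^{\bw} \mid \m_\tau = \m_\sigma$, so by the translation $\bw$ is a solution to \eqref{e:A} distinct from every $\ba_i$, contradicting (2). Otherwise $\tau \subsetneq \sigma$; pick $\ba \in \sigma \setminus \tau$ and set $\sigma' := \sigma \setminus \{\ba\}$. Because $\tau \subseteq \sigma'$ we get $\m_\sigma = \m_\tau \mid \m_{\sigma'} \mid \m_\sigma$, hence $\m_{\sigma'} = \m_\sigma$; thus $\sigma'$ is a proper subface of $\sigma$ sharing its label with $\sigma$, which forces $\sigma' \notin \Srq$ and contradicts (1).

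I expect the only mild obstacle to be the case analysis in the converse, where both a \emph{witness outside $\sigma$} and a \emph{witness inside $\sigma$} for a duplicate label must be separately ruled out, each by a different one of the two hypotheses. Everything else is a direct unpacking of definitions once the exponent formula for $\m_\sigma$ is in place.
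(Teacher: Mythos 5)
Your proof is correct. Note that the paper itself gives no proof of this statement — it is quoted directly from \cite[Proposition~4.2]{extremal} — so there is no in-paper argument to compare against; your derivation is the natural one: the exponent formula $\pme^{\ba}=\prod_{A}x_A^{\ba\cdot\be_A}$ turns divisibility of generators into the inequality system, and the two cases of the converse (a duplicate label coming from a vertex outside $\sigma$ versus from a proper subface of $\sigma$) are exactly what conditions \eqref{i:first-inequality} and \eqref{i:subsets} respectively rule out. The only item you defer — that the Scarf complex is closed under taking subsets, needed for condition \eqref{i:subsets} in the forward direction — is indeed standard (see, e.g., \cite{P}), so citing it is fine.
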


\begin{remark}\label{r:permutohedra}
  Inequalities very similar to the ones in~\cref{t:shift} were used by Postnikov~\cite[after Definition 6.1]{Postnikov} to describe generalized permutohedra.
\end{remark}

The existence of a vector $\bw \ne \ba_i$ that satisfies condition \eqref{i:first-inequality} in \cref{t:shift} will be used frequently throughout the paper to demonstrate that a given face of $\Trq$ is not in the Scarf complex $\Srq$.  This will lead to \cref{d:witness} of a {\it  witness} vector $\bw$. 

 Algebraically, the conditions in \cref{t:shift} are designed to ensure there are no repeated monomial labels on the faces $\sigma$ of the Scarf complex. 
To interpret \cref{t:shift} geometrically, note that each $\ba_i \in \sigma$ and each $A \subset [q]$ yields a hyperplane and a half-space of $\Hrq$, which are, respectively,
$$
\{ {\bf x} \in \Hrq \mid {\bf x} \cdot \be_A = \ba_i \cdot \be_A\}
\qand 
\{ {\bf x} \in \Hrq \mid {\bf x}\cdot \be_A \leq \ba_i \cdot \be_A\}.
$$  
Hence for a fixed  set $A$, each point in $\sigma$ yields a  hyperplane of $\Hrq$ with normal vector $\be_A$, and so the hyperplanes associated to $\sigma$ are parallel, resulting in a set of nested half-spaces of $\Hrq.$
For a fixed $A$, the maximum value of ${\bf x} \cdot \be_A$ in $\Nrq$ corresponds to one of these half-spaces, which we call $H_{\be_A}(\sigma)$, that is defined by one or more $\ba_i$. Thus the solution set to the system of inequalities \eqref{e:A} as $A$ varies over subsets of $[q]$ is the subset of $\Hrq$ defined by the intersection of half-spaces, $\cap_{A \subseteq [q]} H_{\be_A}(\sigma)$, which by definition is a convex polytope of $\Hrq.$ 
 This means that the integral solution set of the system of linear equalities is contained in this polytope. Therefore condition~\eqref{i:first-inequality} of \cref{t:shift} means that the lattice points contained in this polytope are precisely $\{\ba_1,\ldots,\ba_d\}$.
Although these polytopes  
are defined because of an algebraic problem, they are also combinatorially interesting; see \cref{r:hypersimplex}.

The direction vector $\be_A$ when situated at the origin lives in the first octant and so is not parallel to $\Hrq$, hence $\be_A$ does not lie in $\Hrq.$ There is another way to get to the polytope containing the integral solution set staying within $\Hrq$, allowing examples to be worked in the hyperplane, which has one lower dimension. Denote the projection of $\be_A$ onto $\Hrq$ by $\widetilde{\be_A}$.

\begin{example}
Set $q=3$ and $r=2$. In $\RR^3$, the vector $e_3= [0,0,1]$ is not in the plane containing $\mathcal{N}_3^2$, but its projection $\widetilde{\be_3}=[-1/3, -1/3, 2/3]$ shifted to be in the plane containing $\mathcal{N}_3^2$ connects the point $(2/3, 2/3, 2/3)$ to $(1/3, 1/3, 4/3)$. Once we have embedded $\widetilde{\be_3}$ in $\mathcal{N}_3^2$, we select a point of view so that the resulting diagram appears in the plane.

\begin{center}
\begin{tabular}{ccc}
 \begin{tikzpicture}[scale=0.75]
\coordinate (200) at (0, 0);
\coordinate (110) at (1.75, .75);
\coordinate (020) at (3.5,1.5);
\coordinate (101) at (.75, 1.75);
\coordinate (011) at (2.5, 2.5);
\coordinate (002) at (1.5, 3.5);
\coordinate (000) at (1.5, 1.5);
\coordinate (001) at (1.5, 2.5);
\coordinate (003) at (1.5, 4);
\coordinate (300) at (-0.5, -0.5);
\coordinate (030) at (4,1.5);
\coordinate (a) at (1.4, 2);

\coordinate (Z) at (0,-0.865);
\draw[] (Z) circle(0.0001);

\draw[black, fill=black] (200) circle(0.05);
\draw[black, fill=black] (110) circle(0.05);
\draw[black, fill=black] (020) circle(0.05);
\draw[black, fill=black] (101) circle(0.05);
\draw[black, fill=black] (011) circle(0.05);
\draw[black, fill=black] (002) circle(0.05);
\draw[black, fill=black] (000) circle(0.05);
\draw[-] (300) -- (000);
\draw[-] (030) -- (000);
\draw[-] (000) -- (003);
\draw[-,thick] (200) -- (020);
\draw[-,thick] (020) -- (002);
\draw[-,thick] (200) -- (002);
\draw[->, ultra thick] (000) -- (001);

 \node[label = {[xshift=-.6cm, yshift=-0.3cm] {\small $(2,0,0)$}}] at (200) {};
 \node[label = {[xshift=.45cm, yshift=-0.2cm] {\small $(0,2,0)$}}] at (020) {};
 \node[label = {[xshift=-.6cm, yshift=-0.4cm] {\small $(0,0,2)$}}] at (002) {};
 \node[label = {[xshift=.65cm, yshift=-0.45cm] {\small $(1,1,0)$}}] at (110) {};
 \node[label = {[xshift=-.6cm, yshift=-0.45cm] {\small $(1,0,1)$}}] at (101) {};
 \node[label = {[xshift=.6cm, yshift=-0.3cm] {\small $(0,1,1)$}}] at (011) {};
 \node[label = right: {$\be_3$}] at (a) {};
 \end{tikzpicture} 

  & \hspace{.4in} &

 \begin{tikzpicture}[scale=0.75]
\coordinate (200) at (0, 1);
\coordinate (110) at (1, 1);
\coordinate (020) at (2,1);
\coordinate (101) at (0.5, 1.865);
\coordinate (011) at (1.5, 1.865);
\coordinate (002) at (1, 2.73);
\coordinate (000) at (1, 1.577);
\coordinate (a) at (1, 2.154);
\coordinate (b) at (.9, 1.865);

\coordinate (Z) at (0,-0.865);
\draw[] (Z) circle(0.0001);

\draw[black, fill=black] (200) circle(0.05);
\draw[black, fill=black] (110) circle(0.05);
\draw[black, fill=black] (020) circle(0.05);
\draw[black, fill=black] (101) circle(0.05);
\draw[black, fill=black] (011) circle(0.05);
\draw[black, fill=black] (002) circle(0.05);
\draw[-, thick] (200) -- (020);
\draw[-, thick] (020) -- (002);
\draw[-, thick] (200) -- (002);
\draw[->, ultra thick] (000) -- (a);

 \node[label = {[xshift=-.4cm, yshift=-0.7cm] {\small $(2,0,0)$}}] at (200) {};
 \node[label = {[xshift=.4cm, yshift=-0.7cm] {\small $(0,2,0)$}}] at (020) {};
 \node[label = {[xshift=0cm, yshift=-0.15cm] {\small $(0,0,2)$}}] at (002) {};
 \node[label = {[xshift=0cm, yshift=-0.75cm] {\small $(1,1,0)$}}] at (110) {};
 \node[label = {[xshift=-.6cm, yshift=-.35cm]{\small $(1,0,1)$}}] at (101) {};
 \node[label = {[xshift=.6cm, yshift=-.35cm]{\small $(0,1,1)$}}] at (011) {};
 \node[label ={[xshift=.3cm, yshift=-0.65cm] $\widetilde{\be_3}$}] at (b) {};

 \end{tikzpicture} 
 \end{tabular}
\end{center}

\end{example}

\begin{proposition}\label{p:min-max}
Let $\sigma = \{\ba_1, \ldots, \ba_d\}$ be a $d$-dimensional face of $\Trq$, and let   $\bw \in \Nrq$. 
\begin{enumerate}
\item\label{i:tilde} if $A \subset [q]$, then $\bw$ satisfies \eqref{e:A} if and only if $\bw \cdot \widetilde{\be_A} \leq \max_i \{\ba_i \cdot \widetilde{\be_A}\}$;
\item\label{i:opposite} if $A \subset [q]$, then $\widetilde{\be_A} = -\widetilde{\be_{\overline{A}}}$; 
\item\label{i:tilde-min} if $A \subset [q]$, then $\bw \cdot \widetilde{\be_A} \leq \max_i \{\ba_i \cdot \widetilde{\be_A}\}$ if and only if $\min_i \{\ba_i \cdot {\be_{\overline{A}}}\}\leq  \bw\cdot \be_{\overline{A}}$;
\item\label{i:min-max} $\bw$ satisfies \eqref{e:A} for all $A\subseteq [q]$ if and only if
 $$ \min_i\{\ba_i \cdot \be_A\} \leq \bw \cdot \be_A \leq \max_i \{\ba_i \cdot \be_A\} 
            \qforall A \subseteq [q].$$    
\end{enumerate}  
\end{proposition}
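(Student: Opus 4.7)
The plan is to work out an explicit formula for $\widetilde{\be_A}$ as the orthogonal component of $\be_A$ perpendicular to $\bj$. Since $\Hrq$ has normal $\bj$ and $\be_A\cdot\bj = |A|$, the projection is
$$\widetilde{\be_A} = \be_A - \tfrac{|A|}{q}\bj.$$
All the claims will fall out from this formula together with the defining property $\bw\cdot\bj = \ba_i\cdot\bj = r$ of vectors in $\Nrq$.

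For part \eqref{i:tilde}, I would compute
$\bw\cdot\widetilde{\be_A} = \bw\cdot\be_A - \tfrac{|A|r}{q}$ and
$\ba_i\cdot\widetilde{\be_A} = \ba_i\cdot\be_A - \tfrac{|A|r}{q}$, so both sides of \eqref{e:A} get shifted by the same constant $\tfrac{|A|r}{q}$, and the inequality is preserved. Part \eqref{i:opposite} is immediate from $\be_A + \be_{\overline{A}} = \bj$ together with the fact that the projection formula sends $\bj$ to $\mathbf{0}$: summing the explicit formulas for $\widetilde{\be_A}$ and $\widetilde{\be_{\overline{A}}}$ yields $\bj - \bj = \mathbf{0}$.

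For part \eqref{i:tilde-min}, I would multiply the inequality $\bw\cdot\widetilde{\be_A} \leq \max_i\{\ba_i\cdot\widetilde{\be_A}\}$ by $-1$, which converts $\max$ to $-\min$, and then use part \eqref{i:opposite} to replace each occurrence of $-\widetilde{\be_A}$ by $\widetilde{\be_{\overline{A}}}$. This gives $\min_i\{\ba_i\cdot\widetilde{\be_{\overline{A}}}\} \leq \bw\cdot\widetilde{\be_{\overline{A}}}$, and applying the same constant-shift calculation as in part \eqref{i:tilde} (with $\overline{A}$ in place of $A$) strips the tildes off to produce the claimed inequality.

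Finally, for part \eqref{i:min-max}, the observation is that as $A$ ranges over all subsets of $[q]$, so does $\overline{A}$. The inequality in \eqref{e:A} for $A$ itself supplies the upper bound $\bw\cdot\be_A \leq \max_i\{\ba_i\cdot\be_A\}$. The inequality in \eqref{e:A} applied to $\overline{A}$ is equivalent, via parts \eqref{i:tilde} and \eqref{i:tilde-min}, to $\min_i\{\ba_i\cdot\be_A\}\leq \bw\cdot\be_A$. Pairing each $A$ with its complement therefore shows the family of inequalities \eqref{e:A} is equivalent to the two-sided bound stated in \eqref{i:min-max}. There is no real obstacle here — the entire proposition is a bookkeeping exercise built on the projection formula — but it is worth laying out carefully since these reformulations will be used repeatedly in later sections to check the Scarf condition.
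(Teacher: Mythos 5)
Your proposal is correct and follows essentially the same route as the paper: the explicit projection formula $\widetilde{\be_A} = \be_A - \frac{|A|}{q}\bj$, the observation that dotting with any vector in $\Nrq$ shifts both sides of \eqref{e:A} by the same constant $\frac{|A|}{q}r$, the identity $\widetilde{\be_A} + \widetilde{\be_{\overline{A}}} = \mathbf{0}$ from $\be_A + \be_{\overline{A}} = \bj$, and the pairing of $A$ with $\overline{A}$ for part \eqref{i:min-max}. No gaps.
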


\begin{proof}
Since $\bj$ is the normal vector to $\Hrq$, it is straightforward to verify that 
\begin{equation}\label{projection} 
\widetilde{\be_A} = \be_A - {\frac{|A|}{q}}\bj,
\end{equation}
where $|A|$ is the cardinality of the set $A$. 
Then, for any ${\bf x} \in \Nrq$,
\[
    {\bf x} \cdot \widetilde{\be_A} = {\bf x} \cdot \be_A - \frac{|A|}{q}r,
\]
so, for any ${\bf x, y} \in \Nrq$,
\[
    {\bf x} \cdot \be_A \leq {\bf y} \cdot \be_A\quad
    \text{if and only if}\quad
    {\bf x} \cdot \widetilde{\be_A} \leq {\bf y} \cdot \widetilde{\be_A}.
\]
Since $\bw, \ba_1, \ldots, \ba_d \in \Nrq$, then ~\eqref{i:tilde} readily follows.

Next note that $\be_A + \be_{\overline{A}} = \bj$.  Then
$$\widetilde{\be_A} + \widetilde{\be_{\overline{A}}} = \be_A + \be_{\overline{A}} - \left({\frac{|A|}{q}} + {\frac{|\overline{A}|}{q}}\right)\bj = \bj - \bj = 0$$ 
in $\Nrq$, by Equation~\eqref{projection}, implying~\eqref{i:opposite} .

From~\eqref{i:opposite} we deduce $\bw \cdot \widetilde{\be_{\overline{A}}} = -\bw \cdot \widetilde{\be_{A}}$ and $\max_i \{\ba_i \cdot \widetilde{\be_A}\} = - \min_i \{\ba_i \cdot \widetilde{\be_{\overline{A}}}\}$, implying~\eqref{i:tilde-min}.
Finally, as $A$ runs over all possible subsets of $[q]$, so does $\overline{A}$, so~\eqref{i:min-max} follows from~\eqref{i:tilde} and~\eqref{i:tilde-min}.
\end{proof}

Part~\eqref{i:tilde} of~\cref{p:min-max} means that we can define the polytopes using $\widetilde{\be_A}$ in place of $\be_A.$ Geometrically, $\widetilde{\be_A}$ defines a hyperplane $\{ {\bf x} \in \Hrq \mid {\bf x} \cdot \widetilde{\be_A} = \ba_i \cdot \widetilde{\be_A}\}$, and so $\{ {\bf x} \in \Hrq \mid {\bf x}\cdot \widetilde{\be_A} \leq \ba_i \cdot \widetilde{\be_A}\}$ defines a half-space of $\Hrq$ for each $A$ and each $\ba_i.$ The advantage here is that $\widetilde{\be_A}$ lies in $\Hrq.$

As an immediate consequence of part~\eqref{i:opposite}, for each face $\sigma\in \Trq$, the hyperplanes in $\Hrq$ corresponding to $\widetilde{\be_A}$ and $\widetilde{\be_{\overline{A}}}$ are parallel, even though the hyperplanes in $\RR^q$ corresponding to $\be_A$ and $\be_{\overline{A}}$ are not.
Thus when we draw in $\Hrq$ a region described by part~\eqref{i:min-max}, we form a ``sandwich'' between two parallel hyperplanes.

\begin{example}\label{e:sandwiches}
Let $r=2$ and $q=3.$
The hyperplane corresponding to $A=\{3\}$ and $\ba_i = (0,0,2)$ is the dashed horizontal line at the top of the diagram below. Since $(0,0,2) \cdot \be_A = (0,0,2) \cdot (0,0,1) = 2$ is the maximum value of $\ba_i \cdot \be_A$ as $\ba_i$ runs through the points in ${\mathcal N}_3^2,$ then for $\gamma$ the maximal face of the Taylor complex, which contains all $\ba_i \in {\mathcal N}_3^2$, we have $H_{\be_A}(\gamma)$ is the half-space below the dashed line. Now $\overline{A} = \{1,2\}$ and $H_{\be_{\overline{A}}}(\gamma)$ is the half-space lying above the dashed horizontal line at the bottom of the diagram, which corresponds to $\ba_i = (1,1,0), (2,0,0),$ and $(0,2,0),$ since $\ba_i \cdot \be_{\overline{A}} = \ba_i \cdot (1,1,0) = 2$ for each of these three points. Notice that for each of these three points $\bw \cdot \be_A =0$, which is the minimum value over all points $\bw$ in $\gamma$ of $\bw \cdot \be_A,$ so the half-space lying above the lower dashed horizontal line can also be described using the minimum value, as in \cref{p:min-max}.

 To see that $\sigma = \{(0,0,2),(1,1,0)\}$ is not an edge in $\mathbb{S}_3^2$, consider the convex region for $\sigma$. As above, for $A=\{3\}$ and $\overline{A} = \{1,2\}$ the region is the "sandwich" bounded by the two dashed lines. For $A = \{2\}$ and $\overline{A} = \{1,3\}$ the region is bounded by the lines with mixed dashes and dots, and for $A = \{1\}$ and $\overline{A} = \{2,3\}$ the region is bounded by the dotted lines. The intersection of these regions is a parallelogram containing $(1,0,1)$ and $(0,1,1)$, which are not in $\sigma$. Therefore by \cref{t:shift}, $\sigma$ is not a face of $\SS_3^2.$
    
    Similarly, $\tau = \{(1,1,0),(1,0,1),(0,1,1)\}$ is a face in $\mathbb{S}_3^2$. Note that the hyperplane corresponding to $A=\{3\}$ that maximizes the value of $\ba_i \cdot \be_A$ over $\tau$ now passes through $(1,1,0)$ and $(0,1,1).$ The intersection of the regions for $\tau$ is the central triangle in the figure, whose only lattice points lie in $\tau.$

Continuing in this manner, one can see that the facets of the Scarf complex $\SS_3^2$ are represented by the four shaded triangles in the right hand diagram, recovering a result from \cite{extremal}.

\begin{center}
\begin{tabular}{ccc}
 
 \begin{tikzpicture}[scale=0.75]
\coordinate (200) at (0, 0);
\coordinate (110) at (1, 0);
\coordinate (020) at (2,0);
\coordinate (101) at (0.5, 0.865);
\coordinate (011) at (1.5, 0.865);
\coordinate (002) at (1, 1.73);

\coordinate (TL) at (0,1.73);
\coordinate (TR) at (2,1.73);
\coordinate (LT) at (-0.5,0.865);
\coordinate (LB) at (0.5,-0.865);
\coordinate (RT) at (2.5,0.865);
\coordinate (RB) at (1.5,-0.865);

\draw [fill=gray!20](101) -- (002) -- (011) -- (110);

\draw[dashed, thick, -] (TL) -- (TR);
\draw[dashed, thick, -] (200) -- (020);
\draw[densely dotted, thick, -] (RB) -- (TL);
\draw[densely dotted, thick, -] (020) -- (002);
\draw[dashdotdotted, thick, -] (LB) -- (TR);
\draw[dashdotdotted, thick, -] (200) -- (002);

\draw[black, fill=black] (200) circle(0.05);
\draw[black, fill=black] (110) circle(0.1);
\draw[black, fill=black] (020) circle(0.05);
\draw[black, fill=black] (101) circle(0.05);
\draw[black, fill=black] (011) circle(0.05);
\draw[black, fill=black] (002) circle(0.1);

\node[label = {[xshift=-.6cm, yshift=-0.4cm] {\tiny $(2,0,0)$}}] at (200) {};
 \node[label = {[xshift=.6cm, yshift=-0.4cm] {\tiny $(0,2,0)$}}] at (020) {};
 \node[label = {[xshift=0cm, yshift=-0.15cm] {\tiny $(0,0,2)$}}] at (002) {};
 \node[label = {[xshift=.65cm, yshift=-0.65cm] {\tiny $(1,1,0)$}}] at (110) {};
 \node[label = {[xshift=-.55cm, yshift=-0.4cm] {\tiny $(1,0,1)$}}] at (101) {};
 \node[label = {[xshift=.55cm, yshift=-0.4cm]{\tiny $(0,1,1)$}}] at (011) {};
 
 \end{tikzpicture} 

 &
 
 \begin{tikzpicture}[scale=0.75]
\coordinate (200) at (0, 0);
\coordinate (110) at (1, 0);
\coordinate (020) at (2,0);
\coordinate (101) at (0.5, 0.865);
\coordinate (011) at (1.5, 0.865);
\coordinate (002) at (1, 1.73);

\coordinate (TL) at (0,1.73);
\coordinate (TR) at (2,1.73);
\coordinate (LT) at (-0.5,0.865);
\coordinate (LB) at (0.5,-0.865);
\coordinate (RT) at (2.5,0.865);
\coordinate (RB) at (1.5,-0.865);

\draw [fill=gray!20](101) -- (110) -- (011);
\draw[dashed, thick, -] (LT) -- (RT);
\draw[dashed, thick, -] (200) -- (020);
\draw[densely dotted, thick, -] (RB) -- (TL);
\draw[densely dotted, thick, -] (020) -- (002);
\draw[dashdotdotted, thick, -] (LB) -- (TR);
\draw[dashdotdotted, thick, -] (200) -- (002);

\draw[black, fill=black] (200) circle(0.05);
\draw[black, fill=black] (110) circle(0.1);
\draw[black, fill=black] (020) circle(0.05);
\draw[black, fill=black] (101) circle(0.1);
\draw[black, fill=black] (011) circle(0.1);
\draw[black, fill=black] (002) circle(0.05);

\node[label = {[xshift=-.6cm, yshift=-0.4cm] {\tiny $(2,0,0)$}}] at (200) {};
 \node[label = {[xshift=.6cm, yshift=-0.4cm] {\tiny $(0,2,0)$}}] at (020) {};
 \node[label = {[xshift=0cm, yshift=-0.15cm] {\tiny $(0,0,2)$}}] at (002) {};
\node[label = {[xshift=.65cm, yshift=-0.65cm] {\tiny $(1,1,0)$}}] at (110) {};
 \node[label = {[xshift=-.55cm, yshift=-0.2cm] {\tiny $(1,0,1)$}}] at (101) {};
 \node[label = {[xshift=.55cm, yshift=-0.2cm]{\tiny $(0,1,1)$}}] at (011) {};
 
 \end{tikzpicture} 

&

\begin{tikzpicture}[scale=0.75]
\coordinate (200) at (0, 0);
\coordinate (110) at (1, 0);
\coordinate (020) at (2,0);
\coordinate (101) at (0.5, 0.865);
\coordinate (011) at (1.5, 0.865);
\coordinate (002) at (1, 1.73);

\coordinate (Z) at (0,-0.865);
\draw[] (Z) circle(0.0001);

\draw [fill=gray!20](200) -- (110) -- (101);
\draw [fill=gray!20](101) -- (110) -- (011);
\draw [fill=gray!20](110) -- (011) -- (020);
\draw [fill=gray!20](101) -- (002) -- (011);

\draw[black, fill=black] (200) circle(0.05);
\draw[black, fill=black] (110) circle(0.05);
\draw[black, fill=black] (020) circle(0.05);
\draw[black, fill=black] (101) circle(0.05);
\draw[black, fill=black] (011) circle(0.05);
\draw[black, fill=black] (002) circle(0.05);
\draw[-] (200) -- (020);
\draw[-] (020) -- (002);
\draw[-] (200) -- (002);
\draw[-] (110) -- (011);
\draw[-] (101) -- (011);
\draw[-] (110) -- (101);

\node[label = {[xshift=-.6cm, yshift=-0.4cm] {\tiny $(2,0,0)$}}] at (200) {};
 \node[label = {[xshift=.6cm, yshift=-0.4cm] {\tiny $(0,2,0)$}}] at (020) {};
 \node[label = {[xshift=0cm, yshift=-0.15cm] {\tiny $(0,0,2)$}}] at (002) {};
 \node[label = {[xshift=0cm, yshift=-0.7cm] {\tiny $(1,1,0)$}}] at (110) {};
 \node[label = {[xshift=-.55cm, yshift=-0.4cm] {\tiny $(1,0,1)$}}] at (101) {};
 \node[label = {[xshift=.55cm, yshift=-0.4cm]{\tiny $(0,1,1)$}}] at (011) {};

 \end{tikzpicture} 

\end{tabular}
\end{center}

\end{example}                                                      

Note that for $q \leq 4$, the techniques above suffice to determine the Scarf complex $\Srq$ for all $r$.

\begin{example}[{\bf The case where $q \leq 3$}]\label{e:q23}
When $q=1$, there is only a single point, the vector $(r)=r\be_1$.
When $q=2$ the hyperplane $\HH_2^r$ is a line for all $r$ and when $q=3$ the hyperplane $\HH_3^r$ is two-dimensional and so can be depicted in a plane as in \cref{e:sandwiches}. In these two cases, it is straightforward to explicitly describe the Scarf complex for all $r.$ For $q=2$, all lattice points in ${\mathcal N}_2^r$ are necessarily colinear, so if $\sigma \in \TT_2^r$ contains two points that are not adjacent, then $\sigma$ is not in the Scarf complex. For $q=3$ one can verify using the same technique that the Scarf complex is a collection of triangles as depicted below when $r=4$. The pattern holds for all $r.$

\begin{figure}
\hfill
\begin{center} 
\begin{subfigure}{0.4\textwidth}
\begin{center}
\begin{tikzpicture}[scale=.5]
\coordinate (00) at (0, 0);
\coordinate (0r) at (0,6);
\coordinate (1r-1) at (1,5);
\coordinate (2r-2) at (2,4);
\coordinate (dots) at (3,3);
\coordinate (r-22) at (4,2);
\coordinate (r-11) at (5,1);
\coordinate (r0) at (6,0);

\draw[black, fill=black] (00) circle(0.05);
\draw[black, fill=black] (0r) circle(0.05);
\draw[black, fill=black] (1r-1) circle(0.05);
\draw[black, fill=black] (2r-2) circle(0.05);
\draw[black, fill=black] (r-22) circle(0.05);
\draw[black, fill=black] (r-11) circle(0.05);
\draw[black, fill=black] (r0) circle(0.05);
\draw[-] (00) -- (r0);
\draw[-] (00) -- (0r);
\draw[-,thick] (r0) -- (r-11);
\draw[-,thick] (r-11) -- (r-22);
\draw[-,thick] (0r) -- (1r-1);
\draw[-, thick] (1r-1) -- (2r-2);

 \node[label = right: {\small $(r,0)$}] at (r0) {};
 \node[label = right: {\small $(1,r-1)$}] at (1r-1) {};
 \node[label = right: {\small $(2,r-2)$}] at (2r-2) {};
 \node[label = right: {\small $(0,r)$}] at (0r) {};
 \node[label = right: {\small $(r-1,1)$}] at (r-11) {};
 \node[label = right: {\small $(r-2,2)$}] at (r-22) {};
 \node at (dots) {$\ddots$};
 \end{tikzpicture} 
 \end{center}
\caption{$q=2$ any $r$ as a hyperplane in $\RR^2$}
\label{f:q2}
\end{subfigure}
\hspace{.4in}
\begin{subfigure}{0.4\textwidth}
\begin{center}
 \begin{tikzpicture}[scale=0.75]
\coordinate (1111) at (0, 0);
\coordinate (1112) at (1, 0);
\coordinate (1122) at (2,0);
\coordinate (1222) at (3, 0);
\coordinate (2222) at (4, 0);

\coordinate (1113) at (0.5, 0.865);
\coordinate (1123) at (1.5, 0.865);
\coordinate (1223) at (2.5,0.865);
\coordinate (2223) at (3.5, 0.865);

\coordinate (1133) at (1, 1.73);
\coordinate (1233) at (2, 1.73);
\coordinate (2233) at (3, 1.73);

\coordinate (1333) at (1.5, 2.595);
\coordinate (2333) at (2.5, 2.595);

\coordinate (3333) at (2, 3.46);

\draw [fill=gray!20](1111) -- (1112) -- (1113);
\draw [fill=gray!20](1123) -- (1112) -- (1113);
\draw [fill=gray!20](1123) -- (1133) -- (1113);
\draw [fill=gray!20](1123) -- (1133) -- (1233);
\draw [fill=gray!20](1333) -- (1133) -- (1233);
\draw [fill=gray!20](1333) -- (2333) -- (1233);
\draw [fill=gray!20](1333) -- (2333) -- (3333);

\draw [fill=gray!20](1112) -- (1122) -- (1123);
\draw [fill=gray!20](1223) -- (1122) -- (1123);
\draw [fill=gray!20](1123) -- (1223) -- (1233);
\draw [fill=gray!20](2233) -- (1223) -- (1233);
\draw [fill=gray!20](2233) -- (2333) -- (1233);

\draw [fill=gray!20](1122) -- (1222) -- (1223);
\draw [fill=gray!20](2223) -- (1222) -- (1223);
\draw [fill=gray!20](2223) -- (1223) -- (2233);
\draw [fill=gray!20](2223) -- (1222) -- (2222);

\draw[black, fill=black] (1111) circle(0.05);
\draw[black, fill=black] (1122) circle(0.05);
\draw[black, fill=black] (1112) circle(0.05);
\draw[black, fill=black] (1222) circle(0.05);
\draw[black, fill=black] (2222) circle(0.05);
\draw[-] (1111) -- (2222);

\node[label = {[xshift=-0.6cm, yshift=-0.4cm]{\small $(4,0,0)$}}] at (1111) {};
\node[label = {[xshift=-.34cm, yshift=-0.7cm] {\small $(3,1,0)$}}] at (1112) {};
\node[label = {[xshift=0cm, yshift=-0.7cm] {\small $(2,2,0)$}}] at (1122) {};
\node[label = {[xshift=0.34cm, yshift=-0.7cm] {\small $(1,3,0)$}}] at (1222) {};   
\node[label = {[xshift=0.6cm, yshift=-0.4cm]{\small $(0,4,0)$}}] at (2222) {};      

\draw[black, fill=black] (1113) circle(0.05);
\draw[black, fill=black] (1123) circle(0.05);
\draw[black, fill=black] (1223) circle(0.05);
\draw[black, fill=black] (2223) circle(0.05);
\draw[-] (1113) -- (2223);
\node[label = {[xshift=-0.6cm, yshift=-0.4cm] {\small $(3,0,1)$}}] at (1113) {}; 
\node[label = {[xshift=0.6cm, yshift=-0.4cm] {\small $(0,3,1)$}}] at (2223) {};   

\draw[black, fill=black] (1133) circle(0.05);
\draw[black, fill=black] (1233) circle(0.05);
\draw[black, fill=black] (2233) circle(0.05);
\draw[-] (1133) -- (2233);
\node[label = {[xshift=-0.6cm, yshift=-0.4cm] {\small $(2,0,2)$}}] at (1133) {}; 
\node[label = {[xshift=0.6cm, yshift=-0.4cm] {\small $(0,2,2)$}}] at (2233) {};  

\coordinate (1333) at (1.5, 2.595);
\coordinate (2333) at (2.5, 2.595);
\draw[black, fill=black] (1333) circle(0.05);
\draw[black, fill=black] (2333) circle(0.05);
\draw[-] (1333) -- (2333);
\node[label = {[xshift=-0.6cm, yshift=-0.4cm] {\small $(1,0,3)$}}] at (1333) {};
\node[label = {[xshift=0.6cm, yshift=-0.4cm] {\small $(0,1,3)$}}] at (2333) {};

\draw[black, fill=black] (3333) circle(0.05);
\node[label = {[xshift=0.cm, yshift=-0.2cm]{\small $(0,0,4)$}}] at (3333) {};

\draw[-] (1111) -- (3333);
\draw[-] (1112) -- (2333);
\draw[-] (1122) -- (2233);
\draw[-] (1222) -- (2223);
\draw[-] (1112) -- (1113);
\draw[-] (1122) -- (1133);
\draw[-] (1222) -- (1333);
\draw[-] (2222) -- (3333);

 \end{tikzpicture} 
 \end{center}
\caption{$q=3$ and $r=4$ in the hyperplane ${\mathcal H}_3^4$}
\label{f:q3}
\end{subfigure}

 \caption{\cref{e:q23}}\label{f:two-together}
 \end{center}
 \hfill
\end{figure}

\end{example}

It turns out that $q=4$ (see \cref{t:Urq-Srq} and \cref{e:q34}) is the limit for using just the techniques above to describe all of the faces of $\Srq$. When $q\ge 5$ unexpected behavior occurs and additional faces appear. As $r$ grows, the number of classes of vertices in $\Nrq$ that are distinct in the sense of not being equivalent under permutation grows significantly, increasing the complexity of types of potential faces of $\Srq$. Computations show that the unexpected behavior persists as $q$ and $r$ increase.
Indeed, as $q$ and $r$ grow, the problem becomes increasingly difficult, which can also be seen from~\cref{t:char-face,t:decrease powers}.

\section{The faces of $\Srq$} \label{s:Scarf-faces}

In this section, we describe sets of faces that are guaranteed to appear in the Scarf complex, and show how to use the recursive nature of some such faces to visualize higher dimensional  complexes in some settings. \cref{t:shift} gives a recursive condition for checking if a subset of $\Nrq$ is in the Scarf complex, 
which leads, in turn, to other characterizations of the faces of the Scarf complex. Several of these are summarized in \cref{t:char-face}. Throughout the rest of the paper, we will use the following notation in addition to \cref{n:setup}:

\begin{notation}\label{n:setup2}
For positive integers $p$ and $q$, and $\ba=(a_1,\ldots,a_q), \bb=(b_1,\ldots,b_q) \in \ZZ^q$,  use the following  language.
 \begin{itemize} 
 \item $\ba$ is {\bf square-free} if $a_i \in \{0,1\}$ for all $i \in [q]$;
 \item $\ba \leq \bb$ means $a_i \leq b_i$ for all $i \in [q]$;
 \item $\max(\ba)=\max\{a_1,\ldots,a_q\}$;
 \item $\supp(\ba)=\{i \in [q] \st a_i \neq 0 \}$;
 \item $\ba \cap \bb=(\min(a_1,b_1), \min(a_2,b_2),\ldots, \min(a_q,b_q))$;
 \item $\ba 0^p, 0^p \ba \in \ZZ^{p+q}$ 
 are vectors obtained by appending $p$ $0$'s to the right and left of $\ba$, respectively;
 \item $\pi(\ba)=(a_{\pi(1)},\ldots,a_{\pi(q)})$ for a permutation $\pi \in S_q$. 
  \end{itemize}
\end{notation}

The following theorem makes precise what happens to the faces of the Scarf complex when the vertices are translated or are embedded in a higher dimension. It also shows that permuting the coordinates preserves Scarf faces.

\begin{theorem}[{\bf The faces of $\Srq$}]\label{t:char-face}
    Let $p$, $q$, and $r$ be positive integers, and $\sigma = \{\ba_1, \dots, \ba_d\} \in \Trq$.  The following are equivalent.
    
        \begin{enumerate}
     \item \label{i:sigma} $\sigma \in \Srq$;
      \item \label{i:extremal}
      for every $C \subseteq [d]$  the only solutions $\bw \in \Nrq$ to the system of inequalities
        $$
            \bw \cdot \be_A \leq \max_{i \in C} \{\ba_i \cdot \be_A\} 
            \qforall A \subseteq [q]
        $$
        are $\{\ba_i \mid i \in C\}$;
    \item \label{i:min-max-witness} 
    for every $C \subseteq [d]$  the only solutions $\bw \in \Nrq$ to the system of inequalities
        $$
            \min_{i \in C} \{\ba_i \cdot \be_A\} \leq
            \bw \cdot \be_A \leq \max_{i \in C} \{\ba_i \cdot \be_A\} 
            \qforall A \subseteq [q]
        $$
        are $\{\ba_i \mid i \in C\}$;      
    \item \label{i:0-left} $\sigma 0^p=\{\ba_1 0^p, \dots, \ba_d 0^p\} \in \SS_{q+p}^r$ for every  $p \in \NN$;       
    \item \label{i:0-right} $0^p \sigma=\{0^p\ba_1, \dots, 0^p\ba_d\} \in \SS_{q+p}^r$ for every  $p \in \NN$;       
    \item \label{i:permute} $\pi \sigma = \{\pi\ba_1, \dots, \pi\ba_d\} \in \Srq$ for every permutation $\pi \in S_q$;
    \item \label{i:shift} $\sigma +\bv = \{\ba_1+\bv, \dots, \ba_d+\bv\} \in \SS_q^{r+|\bv|}$ for every $\bv \in \ZZ^q$ when $\ba_i + \bv \geq {\bf 0}$ for each $i \in [d].$     \end{enumerate}
\end{theorem}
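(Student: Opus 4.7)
The plan is to unwind the recursive definition of $\Srq$ from \cref{t:shift} into the uniform quantified conditions (2) and (3), and then use these as the working characterization to verify invariance under the operations in (4)--(7). Once this is done, the theorem becomes a collection of translations between logically equivalent statements about witness sets for systems of linear inequalities.

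For (1) $\Leftrightarrow$ (2), I would induct on $d = |\sigma|$; the case $d = 1$ is immediate. For the inductive step, \cref{t:shift} says $\sigma \in \Srq$ iff every proper subset $\sigma' \subsetneq \sigma$ lies in $\Srq$ and the inequality system indexed by $C = [d]$ admits only $\ba_1, \dots, \ba_d$ as solutions in $\Nrq$. The inductive hypothesis turns the first condition into (2) for every $C \subsetneq [d]$, and the second condition is (2) for $C = [d]$; together these give (2). The equivalence (2) $\Leftrightarrow$ (3) then follows by applying \cref{p:min-max}\eqref{i:min-max} separately to the sub-face indexed by each $C$.

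For (1) $\Leftrightarrow$ (6), any $\pi \in S_q$ satisfies $\pi(\be_A) = \be_{\pi(A)}$ and so permutes vertices and index sets $A \subseteq [q]$ compatibly, leaving the quantified system of (2) invariant as a family. For (1) $\Leftrightarrow$ (4), the inequality indexed by $A = \{q+1, \dots, q+p\}$ reads $\bw \cdot \be_A \leq 0$, forcing any $\bw \in \mathcal{N}^r_{q+p}$ to vanish in its last $p$ coordinates; the remaining inequalities then reduce bijectively to the original system for $\sigma$ on $\Nrq$. The argument for (5) is symmetric, using $A = [p]$. For (1) $\Leftrightarrow$ (7), I would use (3): the substitution $\bw = \bw' - \bv$ sets up a bijection between witnesses $\bw' \in \mathcal{N}^{r+|\bv|}_q$ for $\sigma + \bv$ and witnesses $\bw \in \Nrq$ for $\sigma$. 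The total-degree condition $|\bw| = r$ is automatic, and nonnegativity $\bw \geq \mathbf{0}$ follows from the singleton min-constraint $w'_i \geq \min_{j \in C}\{\ba_{j,i} + v_i\} \geq v_i$, using $\ba_{j,i} \geq 0$.

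The main obstacle is bookkeeping the quantifier over $C \subseteq [d]$ in (2) and (3) throughout: the inductive proof of (1) $\Leftrightarrow$ (2) must carry this quantifier at every step because the Scarf condition is inherently recursive on subsets of $\sigma$, and each of (4)--(7) needs its equivalence verified against every sub-face, not only $\sigma$ itself. Once the quantified forms (2) and (3) are in hand, the invariance checks for permutation, zero-padding, and translation are each a short manipulation of the defining inequalities.
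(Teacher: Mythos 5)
Your proposal is correct and follows essentially the same route as the paper: unwinding \cref{t:shift} into the quantified conditions \eqref{i:extremal} and \eqref{i:min-max-witness} via \cref{p:min-max}, then checking invariance under padding, permutation, and translation by direct manipulation of the inequality systems (your choice of $A=\{q+1,\dots,q+p\}$ versus the paper's $A=\supp(\bw_2)$, and your min-constraint argument for nonnegativity in \eqref{i:shift} versus the paper's polytope phrasing, are cosmetic differences). No gaps.
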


\begin{proof}
Items \eqref{i:sigma} and \eqref{i:extremal} are equivalent by \cref{t:shift} by noting that $C$ represents a subface of $\sigma.$ Item \eqref{i:min-max-witness} is also equivalent to \eqref{i:extremal} by~\cref{p:min-max}.

 To see that \eqref{i:sigma} implies \eqref{i:0-left} suppose $\bw \in {\mathcal N}_{q+p}^r$ satisfies   
 $$\bw\cdot \be_{A} \leq \max_{i \in C}\{\ba_i0^p \cdot \be_{A}\} \}$$
 for all $A \subseteq [p+q]$ and all $C \subseteq [d].$ Write $\bw = \bw_10^p + 0^q\bw_2$ where $\supp(\bw_1) \subseteq [q]$ and $\supp(\bw_2) \subseteq \{ q+1, \ldots, q+p\}.$ If $\bw_2 \ne 0$, selecting $A = \supp(\bw_2)$ and $C=[d]$ gives a contradiction since $\ba_i0^p \cdot \be_A=0$ for all $i$. Thus $\supp(\bw) \subseteq [q].$
 
 For $A \subseteq [p+q],$ write $A = A_1 \cup A_2$ where $A_1 \subseteq [q]$ and $A_2 \subseteq \{ q+1, \ldots , q+p\}.$ Note that $\ba_i0^p \cdot \be_A = \ba_i \cdot \be_{A_1}$ and $\bw \cdot \be_A= \bw_1 \cdot \be_{A_1}.$
 Thus
 $$\bw_1 \cdot \be_{A_1} \leq \max_{i \in C}\{\ba_i \cdot \be_{A_1}\}.$$ 
So $\bw_1 = \ba_i$ for some $i\in C$ and $\bw = \ba_i0^p$ as desired. 

To see that \eqref{i:0-left} implies \eqref{i:sigma}, notice that if $\bw \in \Nrq$ satisfies 
 $$\bw \cdot \be_A \leq \max_{i \in C} \{\ba_i \cdot \be_A\} \qforall A \subseteq [q]$$
 then $\bw0^p$ satisfies 
 $$\bw0^p \cdot \be_{A} \leq \max_{i \in C}\{\ba_i0^p \cdot \be_{A}\} \}$$
 so by \eqref{i:0-left}, we have $\sigma 0^p \in \SS_{q+p}^r$, so $\bw0^p = \ba_i 0^p$ for some $i$ by \eqref{i:extremal}. Thus $\bw = \ba_i.$

 The equivalence of \eqref{i:sigma} and \eqref{i:0-right} is similar. 

 Next, if $A \subseteq [q]$ and $\pi \in S_q$ define $\pi(A) = \{ \pi(i) \mid i \in A\}$ and notice that 
 $$\bw \cdot \be_A \leq \max_{i \in C}\{\ba_i \cdot \be_{A}\} \Leftrightarrow \pi(\bw) \cdot \be_{\pi(A)} \leq \max_{i \in C}\{\pi(\ba_i)\cdot \be_{\pi(A)}\}$$
 since for ${\bf x} \in \Nrq$ and $A \subseteq [q]$ we have ${\bf x}\cdot \be_A = \pi({\bf x})\cdot \be_{\pi(A)}.$
 Then the equivalence of \eqref{i:sigma} and \eqref{i:permute} follows~immediately.

 The equivalence of \eqref{i:sigma} and \eqref{i:shift} follows from \eqref{i:extremal}. To see this, note that 
$$(\bw + \bv) \cdot \be_A - (\ba_i + \bv) \cdot \be_A =\bw \cdot \be_A - \ba_i \cdot \be_A.$$ 
Thus $\bw \cdot \be_A - \ba_i \cdot \be_A \leq 0$ if and only if 
$(\bw + \bv) \cdot \be_A - (\ba_i + \bv) \cdot \be_A \leq 0.$
Fix $C \subseteq [d].$ Assume $\bw \in \Nrq$ is a solution to the system
 $$
            \bw \cdot \be_A \leq \max_{i \in C} \{\ba_i \cdot \be_A\} 
            \qforall A \subseteq [q].
        $$
        Then $\bw$ lies in the convex polytope associated to $\sigma = \{ \ba_i \mid i \in C\}.$ Since $\ba_i + \bv \geq {\bf 0}$ for all $i$, the convex polytope associated to $\sigma + \bv = \{ \ba_i + \bv \mid i \in C\}$ lies in $\Nrq.$ Since $\bw + \bv$ is in this polytope, $\bw + \bv \in \Nrq.$ Thus if $\sigma \not\in \Srq$ then there exists a $\bw \in \Nrq$ with $\bw$ a solution to the system but $\bw \ne \ba_i$ for all $i \in C$, which implies $\bw + \bv \ne \ba_i + \bv$ for all $i \in C.$ The converse also follows using $-\bv$ for $\bv.$
 \end{proof}

\begin{example}
The labels in~\cref{f:q2}, setting $r=4$, correspond to the right-hand side of the full triangle in~\cref{f:q3}, 
illustrating how faces can be embedded in higher dimensions as in~\cref{t:char-face}\eqref{i:0-left}. 
In the diagram below, note that $\sigma = \{ (4,0,0), (3,1,0), (3,0,1)\}$ and $\tau = \{(2,0,2), (1,1,2), (1,0,3)\}$ satisfy $\sigma + \bv = \tau$ where $\bv = (-2,0,2)$. Moreover, all triangles in this orientation are translations of $\sigma.$ Note that $\rho = \{(2,0,0), (1,0,1), (1,1,0)\} \in \SS_3^2$ (see \cref{e:sandwiches}) so if $\bv' = (2,0,0)$ we have $\sigma = \rho + \bv' \in \SS_3^4$, so translations in \cref{t:char-face}\eqref{i:shift} can change $r.$ Note that adding $\bv'$ to every face of $\SS_3^2$ yields a copy of $\SS_3^2$ embedded in the lower left corner of the picture of $\SS_3^4$ below. This type of embedding can be generalized so that one can see multiple copies of $\SS_q^{r'}$ in $\Srq$ whenever $r' < r.$

Finally, if $\pi \in S_3$ is given by $\pi = (132),$ then $\pi \tau = \gamma$ where $\gamma = \{ (0,2,2), (1,2,1), (0,3,1)\}.$ 

\begin{center}
    \begin{tikzpicture}[scale=0.75]
\coordinate (1111) at (0, 0);
\coordinate (1112) at (1, 0);
\coordinate (1122) at (2,0);
\coordinate (1222) at (3, 0);
\coordinate (2222) at (4, 0);

\coordinate (1113) at (0.5, 0.865);
\coordinate (1123) at (1.5, 0.865);
\coordinate (1223) at (2.5,0.865);
\coordinate (2223) at (3.5, 0.865);

\coordinate (1133) at (1, 1.73);
\coordinate (1233) at (2, 1.73);
\coordinate (2233) at (3, 1.73);

\coordinate (1333) at (1.5, 2.595);
\coordinate (2333) at (2.5, 2.595);

\coordinate (3333) at (2, 3.46);

\draw [fill=gray!20](1111) -- (1112) -- (1113);
\draw [fill=gray!20](1333) -- (1133) -- (1233);
\draw [fill=gray!20](2223) -- (1223) -- (2233);

\draw[black, fill=black] (1111) circle(0.05);
\draw[black, fill=black] (1122) circle(0.05);
\draw[black, fill=black] (1112) circle(0.05);
\draw[black, fill=black] (1222) circle(0.05);
\draw[black, fill=black] (2222) circle(0.05);
\draw[-] (1111) -- (2222);

\node[label = {[xshift=-0.6cm, yshift=-0.45cm]{\small $(4,0,0)$}}] at (1111) {};
\node[label = {[xshift=-0.35cm, yshift=-0.7cm]{\small $(3,1,0)$}}] at (1112) {};
\node[label = {[xshift=0cm, yshift=-0.7cm]{\small $(2,2,0)$}}] at (1122) {};
\node[label = {[xshift=0.35cm, yshift=-0.7cm] {\small $(1,3,0)$}}] at (1222) {};   
\node[label = {[xshift=0.6cm, yshift=-0.45cm]{\small $(0,4,0)$}}] at (2222) {};      

\draw[black, fill=black] (1113) circle(0.05);
\draw[black, fill=black] (1123) circle(0.05);
\draw[black, fill=black] (1223) circle(0.05);
\draw[black, fill=black] (2223) circle(0.05);
\draw[-] (1113) -- (2223);
\node[label = {[xshift=-0.6cm, yshift=-0.45cm]{\small $(3,0,1)$}}] at (1113) {};
\node[label = {[xshift=0.6cm, yshift=-0.45cm]{\small $(0,3,1)$}}] at (2223) {};   

\draw[black, fill=black] (1133) circle(0.05);
\draw[black, fill=black] (1233) circle(0.05);
\draw[black, fill=black] (2233) circle(0.05);
\draw[-] (1133) -- (2233);
\node[label = {[xshift=-0.6cm, yshift=-0.45cm]{\small $(2,0,2)$}}] at (1133) {};
\node[label = {[xshift=0.6cm, yshift=-0.45cm]{\small $(0,2,2)$}}] at (2233) {};  

\coordinate (1333) at (1.5, 2.595);
\coordinate (2333) at (2.5, 2.595);
\draw[black, fill=black] (1333) circle(0.05);
\draw[black, fill=black] (2333) circle(0.05);
\draw[-] (1333) -- (2333);
\node[label = {[xshift=-0.6cm, yshift=-0.45cm]{\small $(1,0,3)$}}] at (1333) {};
\node[label = {[xshift=0.6cm, yshift=-0.45cm] {\small $(0,1,3)$}}] at (2333) {};

\draw[black, fill=black] (3333) circle(0.05);
\node[label = {[xshift=0cm, yshift=-0.15cm]{\small $(0,0,4)$}}] at (3333) {};

\draw[-] (1111) -- (3333);
\draw[-] (1112) -- (2333);
\draw[-] (1122) -- (2233);
\draw[-] (1222) -- (2223);
\draw[-] (1112) -- (1113);
\draw[-] (1122) -- (1133);
\draw[-] (1222) -- (1333);
\draw[-] (2222) -- (3333);

 \end{tikzpicture} 
 \end{center}
\end{example}

As $r$ grows, \cref{t:char-face} provides a way to use inductive arguments algebraically on the vertices to identify faces of $\Srq$.

\begin{example}\label{e:using-char-face}
    We can use \cref{t:char-face} to show that $\{(3,0,0,0),(1,0,1,1)\}$ is not a face of $\mathbb{S}_4^3$:  
    $$\begin{array}{lcll}
    \{(3,0,0,0),(1,0,1,1)\} \in \mathbb{S}_4^3 & \iff & \{(2,0,0,0),(0,0,1,1)\} \in \mathbb{S}_4^2 & \mbox{by \cref{t:char-face}\eqref{i:shift}}\\
                                               & \iff & \{(0,0,0,2),(0,1,1,0)\} \in \mathbb{S}_4^2 & \mbox{by \cref{t:char-face}\eqref{i:permute}}\\
                                               & \iff & \{(0,0,2),(1,1,0)\}     \in \mathbb{S}_3^2 & \mbox{by \cref{t:char-face}\eqref{i:0-left}},
    \end{array}$$ 
    but this last statement is false, by~\cref{e:sandwiches}. 

    Similarly, we can show $\{(2,1,1,0),(2,0,1,1),(1,1,1,1)\}$ is a face of $\mathbb{S}^4_4$.  
      $$\begin{array}{cll}
         & \{(2,1,1,0),(2,0,1,1),(1,1,1,1)\} \in \mathbb{S}^4_4  & \\
    \iff & \{(1,1,0,0),(1,0,0,1),(0,1,0,1)\} \in \mathbb{S}_4^2  & \mbox{by \cref{t:char-face}\eqref{i:shift}}\\
    \iff & \{(1,1,0,0),(1,0,1,0),(0,1,1,0)\}  \in \mathbb{S}_4^2 & \mbox{by \cref{t:char-face}\eqref{i:permute}}\\
    \iff & \{(1,1,0),(1,0,1),(0,1,1)\} \in \mathbb{S}^2_3        & \mbox{by \cref{t:char-face}\eqref{i:0-right}},
    \end{array}$$ 
    and this last statement is true, by~\cref{e:sandwiches}. 
\end{example}

    \cref{t:char-face} lays the foundation for our current methods of finding faces of $\SS^r_q$ for arbitrary $q, r$. \cref{t:Urq-Srq} below, quoted from~\cite{extremal}, gives us a backbone for building $\Srq$ in terms of square-free vectors. 
    See also~\cref{r:partitions}.

\begin{definition}[{\bf $\Urq$, $\Uar$, and $\UUrq$}~\cite{extremal}] \label{d:Urq}
For $r,q\geq 1$, $\ba=(a_1,\ldots, a_q) \in \NN^q$ let 
\begin{itemize}
\item $\Urq$ denote the set of all square-free
   $q$-tuples with $r$ nonzero entries, i.e.,
$$\Urq = \{\be_A \st A \subseteq [q], \, |A|=r\} \,\, = \,\, \{\ba \in \Nrq,\ a_i \in \{0, 1\} {\text{ for all }}i\in [q]\};$$

\item $\Uar = 
\ba + \U_q^{r-|\ba|}= 
\{( c_1 ,\ldots , c_q) \in \Nrq \, \mid \,
a_i\leq c_i \leq a_i + 1 \qforall i \in [q] \}$. 

\item $\UUrq$ is  the simplicial
complex described by its facets, as follows:
$$\UUrq = \langle \Uar \st \ba \in \NN^q , \quad  r - q < |\ba| < r \rangle.$$ 

\end{itemize}
\end{definition}

\begin{remark}\label{r:hypersimplex}
The realization of $\Urq$ as a polytope in $\NN^q$ is a {\bf hypersimplex} \cite{GGL}, i.e., the realization is the convex hull of every $0/1$-vector in $\NN^q$ whose coordinates add up to a fixed integer. See also~\cite{DST} and~\cite[Example~3.3]{Ziegler}.  Many invariants of a hypersimplex count interesting combinatorial objects; for example, its volume counts permutations by number of ascents (Eulerian numbers)~\cite{Laplace,Stanley}. 
More generally, a polytope that is the convex hull of $0/1$-vectors is called a $0/1$-polytope~\cite{Ziegler-lectures}.
Many well-known polytopes are the convex hull of sets of $0/1$-vectors corresponding to combinatorial objects.  These include permutation polytopes~\cite{permutation-polytopes}, matching polytopes~\cite{matching-polytopes}, and matroid polytopes~\cite{matroid-polytopes}.   The matroid polytope of a uniform matroid is a hypersimplex and vice versa.
\end{remark}

The simplicial complex $\UUrq$ has a nice and inductive structure, and it turns out that a large part of $\Srq$ consists only of $\UUrq$, as we state below.

\begin{theorem}[{\cite[Theorem~5.9~and~Corollary~6.4]{extremal}}]\label{t:Urq-Srq} For $r, q \geq 0$, $\facets(\UUrq) \subseteq  \facets(\Srq)$. Moreover $\Srq=\UUrq$ when $q\leq 4$ and $r\geq 1$. \end{theorem}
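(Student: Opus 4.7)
Proof proposal: The plan rests on two ingredients, namely the polytope/witness characterization of Scarf faces in~\cref{t:shift} (and its reformulations in~\cref{t:char-face}), and the explicit combinatorial structure of hypersimplices.

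\textbf{Part 1: $\facets(\UUrq) \subseteq \facets(\Srq)$.} A facet of $\UUrq$ has the form $\Uar = \ba + \U_q^{r-|\ba|}$ for some $\ba \in \NN^q$ with $r-q < |\ba| < r$. By the translation invariance~\cref{t:char-face}\eqref{i:shift}, it suffices to prove that the pure hypersimplex $\U_q^{r'}$, consisting of all $\be_A$ with $|A|=r'$, is a Scarf face of $\SS_q^{r'}$ for each $1 \leq r' \leq q-1$. To apply~\cref{t:shift}, I would induct on $r'$ to handle the subface condition~\eqref{i:subsets}, and for condition~\eqref{i:first-inequality} compute
\[
\max_{|A|=r'}\{\be_A \cdot \be_B\} \;=\; \min(r', |B|) \qfor B \subseteq [q].
\]
If $\bw \in \mathcal{N}_q^{r'}$ satisfies $\bw \cdot \be_B \leq \min(r', |B|)$ for every $B$, then taking singletons $B=\{i\}$ forces $w_i \leq 1$, so $\bw$ is square-free, and $|\bw|=r'$ then pinpoints $\bw = \be_A$ for some $A$ with $|A|=r'$. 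To upgrade from a face to a facet of $\Srq$, I would argue that adjoining any $\bd \notin \Uar$ strictly enlarges the nested polytope in~\cref{t:shift}\eqref{i:first-inequality}, forcing it to capture an extra lattice point and violating the Scarf condition.

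\textbf{Part 2: $\Srq = \UUrq$ when $q \leq 4$.} Combined with Part 1, it remains to show that every face $\sigma = \{\ba_1,\dots,\ba_d\} \in \Srq$ is contained in some $\Uar$; equivalently, that the coordinate-spread condition $\max_k a_{k,i} - \min_k a_{k,i} \leq 1$ holds for each $i \in [q]$. I would argue by contradiction: if the spread in some coordinate is at least two, construct a witness $\bw \in \Nrq$, distinct from every $\ba_k$, that satisfies the min-max inequalities of~\cref{t:char-face}\eqref{i:min-max-witness}. A natural construction takes the $\ba_k$ with largest entry in the offending coordinate, subtracts one from that entry, and adds one to a coordinate where some other $\ba_j$ is large enough to absorb it while staying within the min-max sandwich. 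For $q \leq 3$ the hyperplane $\Hrq$ is low-dimensional and can be handled directly as in~\cref{e:q23}; for $q = 4$, the permutation symmetry~\cref{t:char-face}\eqref{i:permute} reduces the problem to a manageable number of canonical spread patterns.

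The main obstacle is the $q=4$ step of Part 2: the witness $\bw$ must simultaneously satisfy all $2^q = 16$ half-space inequalities, and verifying this in each canonical configuration requires some case work. By contrast, Part 1 and the cases $q \leq 3$ follow fairly directly from the polytope characterization and the invariance properties already established in~\cref{t:char-face}.
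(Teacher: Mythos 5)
First, note that the paper does not prove this statement at all: it is quoted verbatim from \cite[Theorem~5.9 and Corollary~6.4]{extremal}, so there is no internal proof to compare against. Judging your proposal on its own merits, the overall skeleton (reduce to the hypersimplex $\U_q^{r'}$ via \cref{t:char-face}\eqref{i:shift}, verify the witness condition of \cref{t:shift}, and for $q\le 4$ show every Scarf face has coordinate spread at most one) is the right kind of argument, but three steps are not actually established.

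The most serious problem is your maximality argument in Part~1. The claim that adjoining $\bd\notin\Uar$ ``strictly enlarges the nested polytope\dots forcing it to capture an extra lattice point'' is false as a general principle: enlarging the polytope need not add lattice points (e.g.\ passing from the edge $\{(1,1,0),(1,0,1)\}$ to the triangle $\{(1,1,0),(1,0,1),(0,1,1)\}$ in $\N_3^2$ strictly enlarges the region but adds no lattice point, and indeed the larger set is still a Scarf face). Showing $\Uar\cup\{\bd\}\notin\Srq$ genuinely requires exhibiting, for each $\bd$ with some $d_j\le a_j-1$ or $d_j\ge a_j+2$, a concrete witness for some subset of $\Uar\cup\{\bd\}$; nothing in your write-up produces one. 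Second, in Part~1 your verification of condition~\eqref{i:first-inequality} is only done for the full set $\U_q^{r'}$, and ``induct on $r'$'' does not address condition~\eqref{i:subsets}: the proper subsets of $\U_q^{r'}$ are arbitrary collections $\{\be_{A_1},\dots,\be_{A_k}\}$ with $|A_i|=r'$, not hypersimplices of smaller $r'$. (This gap is fixable: for such a subset, singletons force a potential witness to be $\be_W$ with $|W|=r'$ and then $B=W$ gives $r'\le\max_i|A_i\cap W|$, forcing $W=A_i$ for some $i$ --- but that argument is not the one you wrote.) Third, the $q\le 4$ equality is only sketched; the ``natural construction'' of a witness when some coordinate spread is at least two is not specified precisely enough to check (which coordinate absorbs the $+1$, and why all $2^q$ inequalities hold), and you acknowledge the case analysis is not carried out. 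A useful simplification you could exploit there: since faces are closed under subsets, it suffices to show that every \emph{edge} $\{\ba,\bb\}$ of $\SS^r_4$ has $|a_i-b_i|\le 1$ for all $i$, which by \cref{t:char-face}\eqref{i:permute} and \eqref{i:shift} reduces to a short list of disjoint-support pairs of partitions. As it stands, the proposal is an outline with one incorrect step rather than a proof.
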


To better understand these important facets, we provide an example.

\begin{example}\label{e:q34}
We illustrate \cref{d:Urq} and \cref{t:Urq-Srq} starting with small $r$ and $q.$ For $q=3$ and $r=1$, $\TT$ is a triangle with square-free vertex label so ${\mathcal U}_3^1 = \SS_3^1 = \TT_3^1$.  Now $r-q = -2$ and $r=1$ so if $r-q < |\ba| < r$ we have $|\ba|=0.$ Thus $\UU_3^1 = {\mathcal U}_3^1 = {\mathcal U}_{\bf 0}^1.$  

Now for $q=3$ and $r=2$, we have $-1 < |\ba| < 2$ in the definition of $\UU_3^2.$ If $|\ba| = {0}$ we have ${\mathcal U}_3^2 = {\mathcal U}_{\bf 0}^2$ consists of three square-free vectors forming a central triangle in the figure below. Note that the remaining three triangles are $\be_1 + {\mathcal U}_3^1= {\mathcal U}_{\be_1}^2,$ $\be_2 + {\mathcal U}_3^1= {\mathcal U}_{\be_2}^2,$ and $\be_3 + {\mathcal U}_3^1 = {\mathcal U}_{\be_3}^2.$ So $\UU_3^2 = \langle {\mathcal U}_3^2, {\mathcal U}_{\be_1}^2, {\mathcal U}_{\be_2}^2, {\mathcal U}_{\be_3}^2 \rangle = \SS_3^2.$

\begin{center}
    \begin{tabular}{cc}
\begin{tikzpicture}[baseline=(current bounding box.south)]
\coordinate (100) at (0, 0);
\coordinate (010) at (1, 0);
\coordinate (001) at (.5,.865);

\draw [fill=gray!20](100) -- (001) -- (010);

\draw[black, fill=black] (100) circle(0.05);
\draw[black, fill=black] (010) circle(0.05);
\draw[black, fill=black] (001) circle(0.05);
\draw[-] (100) -- (010);
\draw[-] (010) -- (001);
\draw[-] (100) -- (001);

 \node[label = below: {\scriptsize{$(1,0,0)$}}] at (100) {};
 \node[label = below: {\scriptsize{$(0,1,0)$}}] at (010) {};
 \node[label = above: {\scriptsize{$(0,0,1)$}}] at (001) {};

 \end{tikzpicture} 
 
&
    
\begin{tikzpicture}[baseline=(current bounding box.south)]
\coordinate (200) at (0, 0);
\coordinate (110) at (1, 0);
\coordinate (020) at (2,0);
\coordinate (101) at (0.5, 0.865);
\coordinate (011) at (1.5, 0.865);
\coordinate (002) at (1, 1.73);

\coordinate (Z) at (0,0);
\draw[] (Z) circle(0.0001);

\draw [fill=gray!20](200) -- (110) -- (101);
\draw [fill=gray!20](101) -- (110) -- (011);
\draw [fill=gray!20](110) -- (011) -- (020);
\draw [fill=gray!20](101) -- (002) -- (011);

\draw[black, fill=black] (200) circle(0.05);
\draw[black, fill=black] (110) circle(0.05);
\draw[black, fill=black] (020) circle(0.05);
\draw[black, fill=black] (101) circle(0.05);
\draw[black, fill=black] (011) circle(0.05);
\draw[black, fill=black] (002) circle(0.05);
\draw[-] (200) -- (020);
\draw[-] (020) -- (002);
\draw[-] (200) -- (002);
\draw[-] (110) -- (011);
\draw[-] (101) -- (011);
\draw[-] (110) -- (101);

 \node[label = below: {\scriptsize{$(2,0,0)$}}] at (200) {};
 \node[label = below: {\scriptsize{$(0,2,0)$}}] at (020) {};
 \node[label = above: {\scriptsize{$(0,0,2)$}}] at (002) {};
 \node[label = below: {\scriptsize{$(1,1,0)$}}] at (110) {};
 \node[label = left: {\scriptsize{$(1,0,1)$}}] at (101) {};
 \node[label = right: {\scriptsize{$(0,1,1)$}}] at (011) {};

 \end{tikzpicture} 
\end{tabular}
\end{center}

Next we consider $q=4.$ If $r=1$, then $\TT^1_4$ is a tetrahedron with vertices labeled $\be_1, \be_2, \be_3, \be_4.$ Again $\SS_4^1 = \TT_3^1 = {\mathcal U}_3^1 = \UU_3^1$. For $q=4$ and $r=2$ we again have $\SS_4^2 = \UU_4^2$ by \cref{t:Urq-Srq}. To understand $\UU_4^2$ notice that once again $-2 < |\ba| < 2$ so $\UU_4^2 = \langle {\mathcal U}_4^2, {\mathcal U}_{\be_1}^2,{\mathcal U}_{\be_2}^2,{\mathcal U}_{\be_3}^2,{\mathcal U}_{\be_4}^2\rangle.$ Now ${\mathcal U}_{\be_i} = \be_i + {\mathcal U}_4^1$ is a shift of a tetrahedron. These four facets are the four corner tetrahedra of the subdivided tetrahedron below. The octahedron that remains after truncating these four corner tetrahedra is ${\mathcal U}_4^2$, which consists of the six points $(1,1,0,0), (1,0,1,0),(1,0,0,1),(0,1,1,0),(0,1,0,1),(0,0,1,1).$ Note that algebraically these six points form a five-dimensional simplex in $\SS_4^2$ that embeds in $\NN^3$ as the octrahedron, a hypersimplex. In other words, every subset of vertices in this octahedron forms a face of $\SS_4^2$, even if we do not draw those faces in the diagram.  The advantages of drawing $\SS_4^2$ this way in the diagram include: 
it illustrates the adjacencies to other facets of $\SS_4^2$; it allows us to see symmetries and transformations, such as those illustrated in \cref{t:char-face};   
and finally it is easier to see a three-dimensional picture than a five-dimensional picture.  

\begin{figure}
\hfill
\begin{center}
\begin{subfigure}{0.3\textwidth}
\begin{center}
\begin{tikzpicture}[scale=0.4]

\coordinate (raise) at (0,-4.5);
\draw[black, fill=black] (raise) circle(0.001);

\coordinate (2000) at (0, 0);
\coordinate (1100) at (3.5, 0);
\coordinate (1010) at (2, 1.25);
\coordinate (1001) at (2,3);
\coordinate (0200) at (7,0);
\coordinate (0110) at (5.5, 1.25);
\coordinate (0101) at (5.5, 3);
\coordinate (0020) at (4,2.5);
\coordinate (0011) at (4, 4.25);
\coordinate (0002) at (4,6);

   \coordinate (2000a) at (0, .1);
\coordinate (1100a) at (3.5, .1);
\coordinate (1010a) at (2.15, 1.25);
\coordinate (1001a) at (2.1,3);
\coordinate (0200a) at (7,.1);
\coordinate (0110a) at (5.35, 1.25);
\coordinate (0101a) at (5.4, 3);
\coordinate (0020a) at (4,2.34);
\coordinate (0011a) at (3.85, 4.25);
\coordinate (0002a) at (4,5.85);

\draw[black, fill=black] (2000) circle(0.075);
\draw[black, fill=black] (0200) circle(0.075);
\draw[black, fill=black] (0020) circle(0.075);
\draw[black, fill=black] (0002) circle(0.075);
\draw[black, fill=black] (1100) circle(0.075);
\draw[black, fill=black] (1010) circle(0.075);
\draw[black, fill=black] (1001) circle(0.075);
\draw[black, fill=black] (0110) circle(0.075);
\draw[black, fill=black] (0101) circle(0.075);
\draw[black, fill=black] (0011) circle(0.075);

\draw[-, very thick] (2000) -- (0200);
\draw[-, semithick] (2000) -- (0020);
\draw[-, very thick] (2000) -- (0002);
\draw[-, semithick] (0200) -- (0020);
\draw[-, very thick] (0200) -- (0002);
\draw[-, semithick] (0020) -- (0002);
\draw[-] (1100) -- (1001);
\draw[-] (1100) -- (0101);
\draw[-] (1001) -- (0101);
\draw[-, very thin] (1010) -- (0110);
\draw[-, very thin] (1010) -- (0011);
\draw[-, very thin] (0110) -- (0011);
\draw[-, very thin] (1010) -- (1100);
\draw[-, very thin] (1010) -- (1001);
\draw[-, very thin] (0110) -- (1100);
\draw[-, very thin] (0110) -- (0101);
\draw[-, very thin] (0011) -- (1001);
\draw[-, very thin] (0011) -- (0101);

 \node[label = below: {\scriptsize{$(2,0,0,0)$}}] at (2000a) {};
 \node[label = below: {\scriptsize{$(0,2,0,0)$}}] at (0200a) {};
 \node[label = above: {\scriptsize{$(0,0,2,0)$}}] at (0020a) {};
 \node[label = above: {\scriptsize{$(0,0,0,2)$}}] at (0002a) {};
 \node[label = below: {\tiny{$(1,1,0,0)$}}] at (1100a) {};
 \node[label = left: {\tiny{$(1,0,1,0)$}}] at (1010a) {};
 \node[label = right: {\tiny{$(0,1,1,0)$}}] at (0110a) {};
 \node[label = left: {\tiny{$(1,0,0,1)$}}] at (1001a) {};
 \node[label = right: {\tiny{$(0,1,0,1)$}}] at (0101a) {};
 \node[label = right: {\tiny{$(0,0,1,1)$}}] at (0011a) {}; 
 \end{tikzpicture} 
\end{center}
\caption{$\SS^2_4 = \UU^2_4$}
\label{f:q4r2}
\end{subfigure}
\hspace{0.2in}
\begin{subfigure}{0.4\textwidth}
\begin{center}
 \begin{tikzpicture}[scale=0.4]
\coordinate (3000) at (0, 0);
\coordinate (2100) at (3.5, 0);
\coordinate (2010) at (2, 1.25);
\coordinate (2001) at (2,3);
\coordinate (1200) at (7,0);
\coordinate (1110) at (5.5, 1.25);
\coordinate (1101) at (5.5, 3);
\coordinate (1020) at (4,2.5);
\coordinate (1011) at (4, 4.25);
\coordinate (1002) at (4,6);
\coordinate (0003) at (6,9);
\coordinate (0300) at (10.5,0);
\coordinate (0210) at (9, 1.25);
\coordinate (0201) at (9, 3);
\coordinate (0120) at (7.5,2.5);
\coordinate (0102) at (7.5,6);
\coordinate (0030) at (6, 3.75);
\coordinate (0021) at (6, 5.5);
\coordinate (0012) at (6, 7.25);
\coordinate (0111) at (7.5, 4.24);

   \coordinate (3000a) at (0, .1);
\coordinate (2100a) at (3.5, .1);
\coordinate (2010a) at (2.15, 1.25);
\coordinate (2001a) at (2.1,3);
\coordinate (1200a) at (7,.1);
\coordinate (1110a) at (5.35, 1.25);
\coordinate (1101a) at (5.4, 3);
\coordinate (1020a) at (4,2.34);
\coordinate (1011a) at (3.85, 4.25);
\coordinate (1002a) at (4.1,6);
\coordinate (0003a) at (6,8.85);
\coordinate (0300a) at (10.4,0);
\coordinate (0210a) at (9, 1.15);
\coordinate (0201a) at (8.8, 3);
\coordinate (0120a) at (7.4,2.5);
\coordinate (0102a) at (7.4,6);
\coordinate (0030a) at (6, 3.75);
\coordinate (0021a) at (6, 5.5);
\coordinate (0012a) at (6, 7.25);
\coordinate (0111a) at (7.7, 4.7);

\draw[black, fill=black] (3000) circle(0.05);
\draw[black, fill=black] (1200) circle(0.05);
\draw[black, fill=black] (1020) circle(0.05);
\draw[black, fill=black] (1002) circle(0.05);
\draw[black, fill=black] (2100) circle(0.05);
\draw[black, fill=black] (2010) circle(0.05);
\draw[black, fill=black] (2001) circle(0.05);
\draw[black, fill=black] (1110) circle(0.11);
\draw[black, fill=black] (1101) circle(0.11);
\draw[black, fill=black] (1011) circle(0.11);
\draw[black, fill=black] (0003) circle(0.05);
\draw[black, fill=black] (0300) circle(0.05);
\draw[black, fill=black] (0210) circle(0.05);
\draw[black, fill=black] (0201) circle(0.05);
\draw[black, fill=black] (0120) circle(0.05);
\draw[black, fill=black] (0102) circle(0.05);
\draw[black, fill=black] (0030) circle(0.05);
\draw[black, fill=black] (0021) circle(0.05);
\draw[black, fill=black] (0012) circle(0.05);
\draw[black, fill=black] (0111) circle(0.11);

\draw[-, very thick] (3000) -- (0300);
\draw[-, semithick] (3000) -- (0030);
\draw[-, very thick] (3000) -- (0003);
\draw[-, semithick] (0300) -- (0030);
\draw[-, very thick] (0300) -- (0003);
\draw[-, semithick] (0030) -- (0003);

\draw[-] (2001) -- (0201);
\draw[-] (1002) -- (0102);
\draw[-] (2001) -- (2100);
\draw[-] (1002) -- (1200);
\draw[-] (1200) -- (0201);
\draw[-] (2100) -- (0102);

\draw[-, very thin] (2010) -- (0210);
\draw[-, very thin] (1020) -- (0120);
\draw[-, very thin] (1020) -- (1200);
\draw[-, very thin] (2100) -- (0120);
\draw[-, very thin] (1200) -- (0210);
\draw[-, very thin] (2010) -- (2100);
\draw[-, very thin] (2001) -- (2010);
\draw[-, very thin] (1002) -- (1020);
\draw[-, very thin] (2001) -- (0021);
\draw[-, very thin] (1002) -- (0012);
\draw[-, very thin] (2010) -- (0012);
\draw[-, very thin] (1020) -- (0021);

\draw[-, very thin] (0210) -- (0201);
\draw[-, very thin] (0120) -- (0102);
\draw[-, very thin] (0021) -- (0201);
\draw[-, very thin] (0012) -- (0102);
\draw[-, very thin] (0012) -- (0210);
\draw[-, very thin] (0021) -- (0120);

\draw[-, very thin] (1110) -- (1101);
\draw[-, very thin] (1110) -- (1011);
\draw[-, very thin] (1110) -- (0111);
\draw[-, very thin] (1011) -- (1101);
\draw[-, very thin] (1011) -- (0111);
\draw[-, very thin] (1101) -- (0111);

 \node[label = below: {\tiny{$(3,0,0,0)$}}] at (3000a) {};
 \node[label = below: {\tiny{$(1,2,0,0)$}}] at (1200a) {};
 \node[label = left: {\tiny{$(1,0,0,2)$}}] at (1002a) {};
 \node[label = below: {\tiny{$(2,1,0,0)$}}] at (2100a) {};
 \node[label = left: {\tiny{$(2,0,0,1)$}}] at (2001a) {};
 \node[label = above: {\tiny{$(0,0,0,3)$}}] at (0003a) {};
 \node[label = below: {\tiny{$(0,3,0,0)$}}] at (0300a) {}; 
 \node[label = right: {\tiny{$(0,2,0,1)$}}] at (0201a) {};
 \node[label = right: {\tiny{$(0,1,0,2)$}}] at (0102a) {}; 
 \end{tikzpicture} 
\end{center}
\caption{$\SS^3_4 = \UU^3_4$}
\label{f:q4r3}
\end{subfigure}
\caption{$\SS^r_4 = \UU^r_4$ for $r = 2, 3$.  The top of $\UU^3_4$ is the same as all of $\UU^2_4$ with $\be_4$ added to each vertex, and similarly for the other three corners of $\UU^3_4$ with $\be_1, \be_2, \be_3$, respectively.  See~\cref{e:q34}. The complexes $\SS^r_q$ and $\UU^r_q$ are no longer equal for $q \geq 5$ and $r \geq 3$; see~\cref{p:weird,e:picture-q5-r3}.}
 \end{center}
\hfill
\end{figure}

When $q=4$ and $r=3$, we again see: ten translated copies of $\U_4^1$ (tetrahedra with a triangular base at the bottom), one at each of the four corners, and one along each of the six edges; four translated copies of $\U_4^2$ (octahedra); and one copy of $\U_4^3$ (tetrahedron near the bottom of the diagram with a triangular base at the top). The (square-free) vertices of $\U_4^3$, which are located at the centroid of each triangular face, are shown larger to make this new face easier to identify.  Note that $\U_4^3$ is a reflection of each of the translated copies of $\U_4^1$.  We also again see four translated copies of $\SS_4^2$, one in each corner of the diagram. 

The pictures above are reminiscent of edge-wise subdivisions of simplicial complexes; we refer the interested reader to \cref{log-concave}.
\end{example}

The equivalence of items \eqref{i:extremal} and \eqref{i:min-max-witness} of \cref{t:char-face} leads to the definition of a witness, allowing for two formulations that can be useful. The existence of a witness tracks the failure of a face of the Taylor complex to be Scarf. In light of condition \eqref{i:subsets} of \cref{t:shift}, the terminology allows for either condition of \cref{t:shift} to fail via the use of a subface $\sigma'$ of $\sigma$ defined by a subset $C$ of the indices of $\ba_i$ defining $\sigma.$

\begin{definition}\label{d:witness}
A vector $\bw \in \Nrq$ is a {\bf witness} to $\sigma = \{\ba_1, \ldots,\ba_d\} \notin \Srq$ if there exists a $C \subseteq [d]$ such that $\bw \ne \ba_i$ for all $i \in C$, and one of the following equivalent conditions holds:
\begin{itemize}
    \item[(i)] for all $A \subseteq [q]$
    $$\bw \cdot \be_A \leq \max_{i\in C}\{\ba_i \cdot \be_A\};$$
    \item[(ii)] for all $A \subseteq [q]$
    $$\min_{i \in C} \{\ba_i \cdot \be_A\} \leq \bw \cdot \be_A \leq \max_{i\in C} \{\ba_i \cdot \be_A\}.$$
\end{itemize}
\end{definition}
Note that by \cref{t:char-face}, the two conditions in \cref{d:witness} are equivalent when considered over all $A \subseteq [q]$. Either condition can be used, with the optimal choice depending on the situation at hand.

As an example, consider \cref{e:sandwiches}, where it was shown that $\bw = (1,0,1)$ is a witness to $\sigma = \{(0,0,2),(1,1,0)\} \notin \SS_3^2.$

 \cref{d:witness} says in particular that faces of $\Srq$ correspond to polytopes defined by special equations that have no interior lattice point. It is an interesting combinatorial question to study which polytopes can arise in this way. 

In light of \cref{t:Urq-Srq}, our main goal is understanding the faces of $\Srq$ which are not in $\UUrq$ for $q \ge 5.$ We start by identifying a large number of edges in $\Srq$ not in $\UUrq.$

\begin{lemma}\label{l:all-1-easy}
    Let $\bu, \bv \in \Nrq$.  Let $J \subseteq [q]$ and assume for all $j \in J$ that $u_j = 1$.  Also assume for some $i \not\in J$ that $1 \leq v_i < |J|$ and that $u_i=0$.  Then if $\bw \in \Nrq$ is a witness to $\{\bu, \bv\} \notin \Srq$
    then either $w_j = 0$ for all $j \in J$, or $w_j = 1$ for all $j \in J$. 
\end{lemma}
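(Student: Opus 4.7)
The plan is to translate the hypothesis that $\bw$ is a witness into the two-sided inequalities of \cref{d:witness}(ii) and then probe them with a small number of carefully chosen subsets of $[q]$. Since $\sigma = \{\bu,\bv\}$ has only two vertices, the index set $C$ in \cref{d:witness} must equal $\{1,2\}$: if, for instance, $C = \{1\}$, then applying condition (i) to each singleton $A = \{j\}$ would give $w_j \leq u_j$ for every $j$, which combined with $|\bw| = |\bu| = r$ forces $\bw = \bu$ and contradicts $\bw \neq \bu$. Therefore $\bw$ satisfies
$$\min(\bu \cdot \be_A,\ \bv \cdot \be_A) \leq \bw \cdot \be_A \leq \max(\bu \cdot \be_A,\ \bv \cdot \be_A) \qforall A \subseteq [q].$$

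First I read off the singleton consequences: for each $j \in J$, since $u_j = 1$, we get $w_j \geq 1$ whenever $v_j \geq 1$, and $w_j \in \{0,1\}$ when $v_j = 0$. Suppose, for contradiction, that $\bw$ takes two distinct values on $J$, and split $J$ into $J_0 = \{j \in J \st w_j = 0\}$ and $J_+ = J \setminus J_0$, both assumed nonempty. The singleton analysis forces $v_j = 0$ for every $j \in J_0$, and $w_j \geq 1$ for every $j \in J_+$, so in particular $\sum_{j \in J_+} w_j \geq |J_+|$.

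The key step is to apply the two-sided inequality to the three sets $A_0 = J_0 \cup \{i\}$, $A_+ = J_+ \cup \{i\}$, and $A = J \cup \{i\}$. Using $u_i = 0$ together with the partition above, the values of $\bu$, $\bv$, and $\bw$ on these sets simplify explicitly in terms of $|J_0|$, $|J_+|$, $v_i$, $\sum_{j \in J_+} v_j$, $w_i$, and $\sum_{j \in J_+} w_j$. Combining the upper and lower bounds on $w_i$ that arise from $A_0$ and $A_+$ with the inequality $\sum_{j \in J_+} w_j \geq |J_+|$, and invoking the strict inequality $v_i < |J| = |J_0| + |J_+|$, the constraints on $w_i$ become incompatible, producing the desired contradiction.

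The main obstacle will be the case analysis forced by the signs of $|J_0| - v_i$ and $|J_+| - v_i$: depending on which side the min and max are attained, the bounds on $w_i$ take different explicit forms. In every case, however, the strict inequality $v_i < |J|$ is exactly what closes the borderline regime, and I expect the set $A_0 = J_0 \cup \{i\}$ to be pivotal because it isolates the $w=0$ part of $J$ together with the index carrying the $v$-weight, which is the precise combination that the hypothesis $1 \leq v_i < |J|$ is designed to control.
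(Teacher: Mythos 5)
Your overall strategy --- partition $J$ into $J_0=\{j\in J\st w_j=0\}$ and $J_+=J\setminus J_0$ and derive incompatible bounds on $w_i$ from the sets $A_0=J_0\cup\{i\}$ and $A_+=J_+\cup\{i\}$ --- is genuinely different from the paper's proof, which instead shows $w_{j'}=w_{j''}$ for each pair $j',j''\in J$ by testing sets of the form $\{i\}\cup J_0\cup\{j'\}$ with $|J_0|=v_i-1$, on which $\bu$ and $\bv$ take the common value $v_i$. Your reduction to $C=\{1,2\}$ is correct, and in the setting where the lemma is actually applied (in \cref{p:square-free} one has $\supp(\bu)\cap\supp(\bv)=\emptyset$ and $J=\supp(\bu)$, hence $v_j=0$ for all $j\in J$) your case analysis does close: singletons then give $w_j\in\{0,1\}$ on $J$, so $A_+$ yields $w_i+|J_+|\le\max(|J_+|,v_i)$ and $A_0$ yields $\min(|J_0|,v_i)\le w_i\le\max(|J_0|,v_i)$, and every sign configuration contradicts $|J_0|,|J_+|,v_i\ge 1$ together with $v_i<|J_0|+|J_+|$. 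In that regime your argument is correct and arguably tidier than the paper's.

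Two genuine gaps remain, however. First, ``$\bw$ takes two distinct values on $J$'' is not the negation of the conclusion; you must first establish $w_j\le 1$ on $J$, and your singleton analysis delivers that only where $v_j=0$. Second, and more seriously, the decisive computation is asserted rather than carried out, and it cannot be carried out from the stated hypotheses alone: $\bv\cdot\be_{A_+}=v_i+\sum_{j\in J_+}v_j$, and when that sum is positive the upper bound coming from $A_+$ no longer controls $w_i$. Indeed the lemma as literally stated is false: take $q=r=4$, $\bu=(1,1,0,2)$, $\bv=(0,3,1,0)$, $J=\{1,2\}$, $i=3$; then $\bw=(0,2,1,1)$ satisfies $\min(\bu\cdot\be_A,\bv\cdot\be_A)\le\bw\cdot\be_A\le\max(\bu\cdot\be_A,\bv\cdot\be_A)$ for every $A\subseteq[4]$, so it is a witness, yet $w_1=0$ and $w_2=2$. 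The paper's own proof silently uses $v_j=0$ on $J$ as well (in the claim $\be_{A'}\cdot\bu=\be_{A'}\cdot\bv=v_i$), so you are in good company, but a complete writeup must add that hypothesis explicitly and then actually exhibit the contradiction in each case rather than gesture at it.
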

\begin{proof}
    Choosing $A = J$, \cref{p:min-max} implies $0 \leq w_j \leq 1$ for all $j \in J$.  We will show that $w_{j'} = w_{j''}$ for every pair of distinct $j', j'' \in J$, from which the result follows immediately.
    
    Pick $J_0 \subset J$ such that $|J_0| = v_i - 1$ and such that $j', j'' \in J \setminus J_0$.  It is possible to pick such a $J_0$ because $1 \leq v_i < |J|$.  Let $J' = J_0 \cup j'$ and $J'' = J_0 \cup j''$, and then let $A' = i \cup J'$ and $A'' = i \cup J''$.  It is easy to see that $\be_{A'} \cdot \bu = \be_{A'} \cdot \bv = \be_{A''} \cdot \bu = \be_{A''} \cdot \bv = v_i$.  \cref{p:min-max} then implies that $\be_{A'} \cdot \bw = \be_{A''} \cdot \bw = v_i$.  Then
    \[
        0 = (\be_{A'} - \be_{A''}) \cdot \bw 
            = (\be_{j'} - \be_{j''}) \cdot \bw
            = w_{j'} - w_{j''},
    \]
    as desired.
\end{proof}

\begin{proposition}[\textbf{Edges incident to a square-free vertex}]\label{p:square-free} 
Let $r,q >1.$ If $\bu \in \Urq$ and $\bv \in \Nrq$ are such that $\supp(\bu) \cap \supp(\bv)=\emptyset$,
then
\begin{center} $\{\bu,\bv\} \in \Srq$ if and only if $\bv \neq r\be_i$ for all $i \in [q]$.
\end{center}
\end{proposition}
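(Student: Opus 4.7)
My plan is to prove the two directions separately, with the harder backward direction leveraging \cref{l:all-1-easy}.

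\emph{Forward direction.} Assume $\bv = r\be_i$ for some $i \in [q]$. I would exhibit an explicit witness. Pick any $j \in \supp(\bu)$; this is possible because $|\supp(\bu)| = r \geq 1$, and $j \neq i$ since $\supp(\bu) \cap \supp(\bv) = \emptyset$. Take $\bw = (r-1)\be_i + \be_j \in \Nrq$. Then $\bw \neq \bv$ since $w_j = 1 \neq 0 = v_j$, and $\bw \neq \bu$ since $w_i = r-1 \geq 1$ while $u_i = 0$. The key check is that $\bw \cdot \be_A \leq \max(\bu \cdot \be_A, \bv \cdot \be_A)$ for every $A \subseteq [q]$; I would split on whether $i \in A$. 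If $i \in A$, then $\bv \cdot \be_A = r$, which dominates $|\bw| = r \geq \bw \cdot \be_A$. If $i \notin A$, then $\bv \cdot \be_A = 0$ and $\bw \cdot \be_A \in \{0,1\}$, being $1$ exactly when $j \in A$; but then $\bu \cdot \be_A = |A \cap \supp(\bu)| \geq 1$, since $j \in \supp(\bu)$, so the inequality holds. Thus $\bw$ is a witness and $\{\bu,\bv\} \notin \Srq$.

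\emph{Backward direction.} Assume $\bv \neq r\be_i$ for every $i \in [q]$. Singleton faces are automatically Scarf, since the minimal generators $\pme^\ba$ of $\Erq$ are all distinct. Thus, to show $\{\bu,\bv\} \in \Srq$, it suffices to rule out a witness $\bw \in \Nrq$ with $\bw \neq \bu, \bv$ satisfying the inequalities of \cref{d:witness} with $C = \{1,2\}$. Set $J := \supp(\bu)$, so $|J| = r$ and $u_j = 1$ for all $j \in J$. The hypothesis $\bv \neq r\be_i$ combined with $|\bv| = r$ forces $\bv$ to have at least two positive coordinates; in particular, there exists $i \in \supp(\bv)$ with $1 \leq v_i < r = |J|$. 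Since $i \notin J$ and $u_i = 0$, the hypotheses of \cref{l:all-1-easy} are satisfied, so every potential witness $\bw$ must be constant on $J$. This splits into two cases. If $w_j = 1$ for all $j \in J$, then $|\bw| \geq |J| = r = |\bw|$ forces $\bw = \be_J = \bu$, a contradiction. If $w_j = 0$ for all $j \in J$, then testing the witness inequality at each singleton $A = \{i\}$ with $i \notin J$ yields $w_i \leq v_i$; summing gives $r = |\bw| = \sum_{i \notin J} w_i \leq \sum_{i \notin J} v_i = r$, forcing equality $w_i = v_i$ throughout and hence $\bw = \bv$, again a contradiction.

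The only delicate point is arranging the hypotheses of \cref{l:all-1-easy} in the backward direction; this is where the two assumptions (disjoint supports and $\bv \neq r\be_i$) come together to produce the index $i$ with $1 \leq v_i < |J|$. Everything else reduces to direct inequality checks against the extremal values on $A$.
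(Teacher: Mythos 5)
Your proof is correct and follows essentially the same route as the paper: the backward direction invokes \cref{l:all-1-easy} with $J=\supp(\bu)$ and an index $i\in\supp(\bv)$ satisfying $1\le v_i<r$ (which exists precisely because $\bv\ne r\be_i$), and the forward direction exhibits the same explicit witness $(r-1)\be_i+\be_j$ that the paper offers as its alternative argument. Your write-up is slightly more detailed than the paper's in the case $w_j=0$ for all $j\in J$, where you justify $\bw=\bv$ via the singleton inequalities; this is a routine but welcome completion of a step the paper leaves implicit.
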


\begin{proof}
    Assume $\bv \neq r\be_i$ for all $i \in [q]$, and assume $\bw$ is a potential witness to $\{\bu, \bv\} \not\in \Srq$,
    Let $J = \supp(\bu)$, and let $i \in \supp(\bv)$.  Since $|J|=r$ and $\bv \neq r\be_i$, then $1 \leq v_i < r$, and \cref{l:all-1-easy} applies. Then either: $\bw_j = 1$ for all $j \in \supp(\bu)$, in which case, $\bw = \bu$; or $\bw_j = 0$ for all $j \in \supp(\bu)$, in which case $\bw=\bv$.  Therefore $\bw$ is not a witness.  Since no witness exists, $\{\bu,\bv\} \in \Srq$.

    Conversely, the only edges of $\Srq$ containing $r\be_i$ are of the form $\{r\be_i, (r-1)\be_i + \be_j\}$ by \cite{Lr}, which contradicts $\supp(\bu) \cap \supp(\bv) = \emptyset$.  Alternatively, it is straightforward to check that $\bw=(r-1)\be_i + \be_j$ is a witness to $\{r\be_i,\bv\} \not\in \Srq$, where $j \in \supp(\bv)$.
\end{proof}

\begin{corollary}\label{c:reduce to square-free} 
Let $r,q >1.$ If $\ba,\bb \in \Nrq$ such that $\ba-(\ba\cap\bb)$ is square-free, then 
$\{\ba,\bb\} \in \Srq$ if and only if
\begin{enumerate}
\item $(r-|\ba \cap \bb|) =1$ or 
\item $(r-|\ba \cap \bb|) \ge 2$ and $\bb - (\ba \cap \bb) \neq (r-|\ba \cap \bb|) \be_i$ for all $i \in [q].$
\end{enumerate}
\end{corollary}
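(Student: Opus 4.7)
The plan is to reduce to the disjoint-support situation of \cref{p:square-free} by translating by $-(\ba \cap \bb)$, then apply \cref{p:square-free} directly, with a separate (essentially trivial) treatment of the $r = |\ba \cap \bb| + 1$ case.

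\smallskip

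\textbf{Step 1 (Translate).} Set $\bc = \ba \cap \bb$, $\ba' = \ba - \bc$, $\bb' = \bb - \bc$, and $r' = r - |\bc|$. By the definition of $\cap$ in~\cref{n:setup2}, for each $i \in [q]$ at least one of $a_i - c_i$ or $b_i - c_i$ is zero, so $\supp(\ba') \cap \supp(\bb') = \emptyset$; moreover $\ba', \bb' \in \Nrq[r'][q]$ (with $r' \geq 1$ since $\ba \neq \bb$ for a valid edge). By the hypothesis $\ba - (\ba \cap \bb) = \ba'$ is square-free, so $\ba' \in \U_q^{r'}$. Now apply \cref{t:char-face}\eqref{i:shift} with the translation vector $-\bc$: since all resulting coordinates are nonnegative, we get
\[
\{\ba,\bb\} \in \Srq \iff \{\ba',\bb'\} \in \SS_q^{r'}.
\]

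\smallskip

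\textbf{Step 2 (Case $r'=1$).} Here condition (1) of the corollary holds. Moreover, $\ba' = \be_i$ and $\bb' = \be_j$ for distinct $i,j$, so $\{\ba',\bb'\}$ is an edge of $\U_q^1$. By~\cref{t:Urq-Srq}, $\facets(\U_q^1) \subseteq \facets(\SS_q^1)$, and since $\UU_q^1 = \U_q^1$ is the full $(q{-}1)$-simplex on $\be_1,\dots,\be_q$, every subset (including $\{\be_i,\be_j\}$) is a face of $\SS_q^1$. So $\{\ba',\bb'\} \in \SS_q^1$ and, by Step 1, $\{\ba,\bb\} \in \Srq$.

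\smallskip

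\textbf{Step 3 (Case $r' \geq 2$).} Now $r' > 1$ and $q > 1$, so the hypotheses of \cref{p:square-free} are met with $\bu = \ba'$ and $\bv = \bb'$. The proposition gives
\[
\{\ba',\bb'\} \in \SS_q^{r'} \iff \bb' \neq r'\be_i \text{ for all } i \in [q],
\]
which, together with Step 1, is exactly condition (2) of the corollary.

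\smallskip

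The two cases in Steps 2 and 3 are mutually exclusive and cover all $r' \geq 1$, so combining them with Step 1 yields the desired equivalence. The only subtle point is Step 2: \cref{p:square-free} is stated for $r > 1$, so the $r'=1$ possibility must be handled by hand, but it follows immediately from the well-known structure of $\SS_q^1$ as a full simplex.
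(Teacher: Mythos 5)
Your proof is correct and is essentially the paper's argument made explicit: the paper's proof is the one-line remark that the corollary ``follows immediately from \cref{p:square-free} and \cref{t:char-face}\eqref{i:shift},'' which is exactly your translate-then-apply strategy. Your separate handling of the case $r-|\ba\cap\bb|=1$ (where \cref{p:square-free} does not apply because it requires $r>1$) is a careful touch that the paper leaves implicit.
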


\begin{proof}
 This follows immediately from \cref{p:square-free}  and \cref{t:char-face}\eqref{i:shift}.
 \end{proof}

\begin{remark}\label{r:partitions}
    Finding faces for arbitrary values of $q, r$ is a very hard problem. 
    In order to check whether two vertices $\ba$ and $\bb$  
    share an edge
    in $\Srq$, \cref{t:char-face}\eqref{i:permute},\eqref{i:shift} imply we may assume $\ba$ and $\bb$ have disjoint support, and that the non-zero entries of each vertex are a weakly decreasing sequence of positive integers (also known as a {\bf partition}). See \cref{e:using-char-face}. 
    Equivalently, to fully describe edges of $\Srq$ for $q \geq 2r$, it is enough to understand the graph $G(q,r)$ with vertices the partitions of $r$, and two partitions $\lambda = (\lambda_1, \dots, \lambda_n)$ and $\mu = (\mu_1, \dots, \mu_m)$ are adjacent if and only if the vectors $\lambda 0^m$ and $0^n \mu$ are adjacent in $\Srq$.

    This equivalence of problems then implies that the number of edges of $\Srq$ can be computed by an expression in terms of the edges of $G(q,r)$.  
    In particular, this approach makes the problem of finding edges more tractable.
     We have so far found every edge for $r \leq 8$ and arbitrary $q$.

    We note that for the case of edges specifically, in~\cite[Theorem 7.2]{extremal} it is shown that the number of edges of $\Srq$ is exactly  the first betti number of $\Erq$. As a consequence, counting the number of edges of $\Srq$ is equivalent to bounding the first betti number of powers of all square-free monomial ideals.
\end{remark}

\begin{remark}
\cref{t:char-face}\eqref{i:shift} may be restated as saying that translations preserve whether or not a set of vertices is a face of $\Srq$.  Similarly, other rigid motions, i.e., rotations and reflections, also preserve this property.  The corresponding algebraic interpretations are more involved, and they are not helpful until $r \geq 4$, so we do not explore these further here.
   
\end{remark}

\section{The facets and minimal nonfaces  of  $\Sq$}\label{s:S3q}

When $r = 3$, there are only three types of vectors $\ba \in \N^3_q$ up to permutation:
$$
   \ba = 3\be_i, \quad \ba = 2\be_i + \be_j, \qor \ba = \be_i + \be_j + \be_k, \qfor i, j, k \in [q]. 
$$
\cref{t:char-face} allows us to  describe all faces of $\Sq$ for any $q$.

We start by recalling the characterization of the edges.
\begin{lemma}[{\bf The edges of $\Sq$ (see~{\cite[Lemma~8.1]{extremal}})}]
\label{l:types}
Let  $q\ge 2$ and $\ba\ne \bb$ in $\N^3_q$. Then $\{\ba, \bb\}\in \Sq$ if and only if $\{\ba, \bb\}$ is of one of the following forms for distinct indices $u,v,w,i,j,k \in [q]$: 
$$
\begin{array}{lll}
\{3\be_u, 2\be_u+\be_j\}&& \{2\be_i+\be_j, \be_u+\be_v+\be_w \}\\
\{2\be_u+\be_i, 2\be_u+\be_j\}&& \{\be_i+\be_j+\be_k, \be_u+\be_v+\be_w\}\\
\{2\be_u+\be_i, 2\be_i+\be_u\}&&\{\be_u+\be_i+\be_j, \be_u+\be_v+\be_w\}\\
\{2\be_u+\be_i, \be_u+\be_i+\be_j\} && \{\be_u+\be_v+\be_i, \be_u+\be_v+\be_j\}\\
\{2\be_u+\be_i, \be_u+\be_j+\be_k\}&&
\end{array}
$$
\end{lemma}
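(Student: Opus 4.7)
The plan is to proceed by case analysis on the types of $\ba$ and $\bb$, exploiting the symmetries of \cref{t:char-face}. Using \cref{t:char-face}\eqref{i:permute}, each vector in $\N^3_q$ can, up to permutation of coordinates, be placed into one of three types: type A ($3\be_u$), type B ($2\be_u+\be_i$), or type C ($\be_u+\be_v+\be_w$). This reduces the problem to six type-pairs (AA, AB, AC, BB, BC, CC), each with a small number of subcases determined by how the indices of $\ba$ and $\bb$ coincide.

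The pairs involving a type-A vertex are handled uniformly by the fact (quoted from~\cite{Lr} in the proof of \cref{p:square-free}) that the only Scarf edges containing $r\be_u$ are $\{r\be_u,\,(r-1)\be_u+\be_j\}$. Applied with $r=3$, this rules out all AA and AC edges as well as all AB edges other than $\{3\be_u, 2\be_u+\be_j\}$, matching the first line of the list. For the remaining BB, BC, and CC pair-types both vectors have entries in $\{0,1,2\}$, and it is convenient to stratify by the overlap $\bd := \ba \cap \bb$ and its size $s = |\bd|$. When a pair appears in the list, I would verify it is a Scarf edge via \cref{c:reduce to square-free}: if $s = 2$ apply part~(1), and if $s \in \{0,1\}$ check that $\bb - \bd \neq (3-s)\be_i$ for any $i$ and apply part~(2), after possibly swapping the roles of $\ba$ and $\bb$ so that $\ba - \bd$ is square-free. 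When a pair does not appear in the list, I would exhibit an explicit witness $\bw \in \N^3_q$ and verify the inequalities of \cref{d:witness} in their min-max form (\cref{p:min-max}\eqref{i:min-max}) for every $A \subseteq [q]$. Typical witnesses are small modifications of the two vectors, such as $\bw = \be_u+\be_v+\be_w$ for three distinct indices drawn from the combined supports, or $\bw = 2\be_i+\be_j$ used to reconcile mismatched $2$-entries.

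The main obstacle is bookkeeping rather than conceptual depth: the BB, BC, and CC strata each contain several subcases according to the pattern of index overlap, and each negative case requires a tailored witness. The cleanest way to organize the argument is to tabulate all pair-types (with canonical representatives reduced via \cref{t:char-face}\eqref{i:permute}) alongside the value of $s$, and for each row either invoke \cref{c:reduce to square-free} or supply a witness with the min-max verification. A minor subtlety is that \cref{c:reduce to square-free} requires $\ba - \bd$ (rather than $\bb - \bd$) to be square-free, so when only one of the two satisfies this after subtracting $\bd$, the vectors must be ordered accordingly before invoking the corollary.
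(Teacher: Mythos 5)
Your plan is correct, and it is worth noting that the paper itself offers no proof of this lemma: it is quoted from \cite[Lemma~8.1]{extremal}, so any comparison is between your self-contained derivation and a citation. Your derivation does go through. The reduction via \cref{t:char-face}\eqref{i:permute} to the three shapes $3\be_u$, $2\be_u+\be_i$, $\be_u+\be_v+\be_w$ and then to overlap patterns is exhaustive (AA; AB with the single index equal to, swapped with, or disjoint from the tripled index; BB with the four possible identifications of $\{a,b\}$ with $\{c,d\}$; BC with $|\{a,b\}\cap\supp(\bb)|\in\{0,1,2\}$ split by whether $a$ or $b$ is shared; CC with overlap $0,1,2$). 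Every pair on the list is confirmed by \cref{c:reduce to square-free} exactly as you describe, including the one place where the roles must be swapped, namely $\{2\be_i+\be_j,\ \be_u+\be_v+\be_w\}$ with disjoint supports, where only the square-free member can play the role of $\ba$. Every pair not on the list is one of the nine minimal non-edges of \cref{t:minimalnonfaces3}\eqref{i:nonface-edges}, and the witnesses you anticipate are precisely the ones the paper records in the table of \cref{list}: $2\be_a+\be_b$ for all pairs containing $3\be_a$, and $\be_a+\be_b+\be_c$ for the four non-edges of the form $\{2\be_a+\be_b,\,\cdot\,\}$; that table also carries out the inequality check $\omega(\sigma)\cdot\be_A\le\max\{\ba\cdot\be_A,\bb\cdot\be_A\}$ that your \cref{d:witness} verification requires. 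The one spot where you lean on an outside fact is the type-A negative cases, for which you invoke the classification of edges at $r\be_u$ from~\cite{Lr}; this is consistent with the paper's own usage in \cref{p:square-free}, but if you want the argument fully self-contained the same witness $2\be_a+\be_j$ (with $j$ chosen in the support of the other vertex, or arbitrary for $\{3\be_a,3\be_b\}$) disposes of all of them directly. What your route buys is a uniform, checkable argument built entirely from \cref{t:char-face}, \cref{c:reduce to square-free}, and explicit witnesses, at the cost of the bookkeeping you acknowledge.
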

Notice that all but two of the types of edges listed in \cref{l:types} come from edges of $\Srq$ for $r<3$ that have been shifted in the sense of \cref{t:char-face}\eqref{i:shift}. For instance, the first type listed has the form $\sigma + \bv$ where $\sigma = \{\be_u, \be_j \} \in \SS_q^1$ and $\bv = 2\be_u.$ When $q \ge 6$, edges of the form $\{\be_i+\be_j+\be_k, \be_u+\be_v+\be_w\}$ are not shifted but they are in $\U_q^3.$ That leaves edges of the form $\{2\be_i+\be_j, \be_u+\be_v+\be_w \}$ for $q \ge 5$, which are of particular interest since they are neither shifted nor in $\U_q^3.$ 

Many of the higher dimensional faces of $\Sq$ are also either shifted from $\Srq$ for lower values of $r$ or are in $\U_q^3.$ However, there is one class of faces that is new in the sense that it is not formed in either of these two ways.

\begin{proposition}[\textbf{A new face(t) of $\Sq$}]\label{p:weird}
 Let $q$ be a positive integer, and suppose 
 $i \in P \subseteq [q]$.
 Then the simplex $W_{P,i}$ defined as 
 \begin{align*}
W_{P,i}= \{2\be_{i}+\be_{j} \st j \in P \setminus \{i\}\} 
 &\cup 
\{\be_j + \be_k + \be_l \st   j < k <l\leq q, \quad j,k,l \in {[q]\setminus P}\}\\
& \cup   
\{\be_i + \be_j + \be_k \st  1 \leq j < k \leq q, \quad i \notin \{j,k\}\} 
\end{align*}
is \
\begin{enumerate}
\item\label{i:P1} the facet $\U^3_q$ when $|P|=1$;
\item\label{i:P2} a facet of $\Sq$ that is not in $\UU_q^3$ when $2 \leq  |P| \leq  q-3$;
\item\label{i:Pbig} a face of $\Sq$  contained in the facet $\U_{\be_i}^3$  when $|P|>q-3$.
\end{enumerate}
Moreover, if $W_{P,i}=W_{Q,j}$ for $2\le |P|, |Q|\le q-3$, then $P=Q$ and $i=j$.
 \end{proposition}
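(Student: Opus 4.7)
The argument breaks into four parts corresponding to the four claims of the proposition. Parts \eqref{i:P1}, \eqref{i:Pbig}, and the uniqueness assertion are the easiest and I would dispatch them directly. When $|P|=1$ we have $P\setminus\{i\}=\emptyset$, so the Type~A set (first set) in the definition of $W_{P,i}$ is empty; the remaining two sets jointly enumerate every square-free $\ba\in\N^3_q$, giving $W_{\{i\},i}=\U^3_q$, which is a facet of $\Sq$ by \cref{t:Urq-Srq}. For \eqref{i:Pbig}, if $|P|\ge q-2$ then $|[q]\setminus P|<3$ forces the second set to be empty, while each remaining vertex $\bc$ satisfies $c_i\in\{1,2\}$ and $c_l\in\{0,1\}$ for $l\ne i$, which is exactly the defining condition for $\U^3_{\be_i}$. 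For the uniqueness statement, observe that when $|P|\ge 2$ there is at least one Type~A vertex $2\be_i+\be_j$, and these are the only vertices of $W_{P,i}$ whose coordinates ever attain the value $2$. Hence $i$ is determined as the unique coordinate attaining value $2$ in some vertex of $W_{P,i}$, after which $P\setminus\{i\}=\{j : 2\be_i+\be_j\in W_{P,i}\}$ recovers $P$.

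For part \eqref{i:P2}, the central claim is that $W_{P,i}\in\Sq$. I would apply \cref{t:char-face}\eqref{i:min-max-witness}: for each subset $C\subseteq W_{P,i}$ and each putative witness $\bw\in\N^3_q$ satisfying $\min_{\ba\in C}\ba\cdot\be_A\le\bw\cdot\be_A\le\max_{\ba\in C}\ba\cdot\be_A$ for all $A\subseteq[q]$, one must show $\bw\in C$. The plan is to first extract universal bounds from singleton coordinate sets: $A=\{i\}$ gives $w_i\le 2$ and $A=\{l\}$ for $l\ne i$ gives $w_l\le 1$, which restricts $\bw$ to either the Type~A shape $2\be_i+\be_m$ or to a square-free vector. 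Next, the specializations $A=P$, $A=[q]\setminus P$, and $A=\supp(\ba)$ for individual Type~B and Type~C vertices $\ba\in C$ pin down $\supp(\bw)$ and rule out any $\bw\notin C$. The three vertex types interact through a manageable number of sub-cases, and whenever $C$ lies entirely within a single type the argument degenerates to existing results: within Type~A it reduces to edges of the form $\{2\be_i+\be_j,2\be_i+\be_k\}$ already in \cref{l:types}, within Type~B or Type~C it reduces to Scarf faces of $\U^3_q$ via \cref{t:Urq-Srq}, and mixed cases follow from \cref{p:square-free} and \cref{c:reduce to square-free}.

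Part \eqref{i:P2} also demands showing $W_{P,i}\notin\UU^3_q$ and that it is maximal. For the first, the facets of $\UU^3_q$ have the form $\U^3_\ba$ with $|\ba|\le 2$; any such containment would impose $\ba\le\bc$ for every $\bc\in W_{P,i}$, but the Type~B vertices have support in $[q]\setminus P$ (so $c_i=0$) while the Type~A vertices force $i\in\supp(\ba)$, yielding the contradiction. For maximality, given $\bw\in\N^3_q\setminus W_{P,i}$, I would split on the shape of $\bw$ (either $3\be_l$; or $2\be_l+\be_m$ with $l\ne i$ or $m\notin P\cup\{i\}$; or a square-free vector with $i\notin\supp\bw$ and $\supp\bw\cap P\ne\emptyset$) and in each case exhibit a two- or three-element subset $\sigma'\subseteq W_{P,i}\cup\{\bw\}$ together with a lattice point $\bw'\in\N^3_q\setminus\sigma'$ satisfying the min-max inequalities over $\sigma'$. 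This certifies $\sigma'\notin\Sq$ by \cref{t:char-face}\eqref{i:min-max-witness}, and \cref{t:shift}\eqref{i:subsets} then forces $W_{P,i}\cup\{\bw\}\notin\Sq$. The main obstacle throughout is the combinatorial bookkeeping required in (2a): the subsets $C$ are exponentially many, so the proof hinges on systematically using the reductions provided by $A=\{i\}$, $A=P$, and $A=[q]\setminus P$ to collapse the analysis into a finite list of coordinate patterns for $\bw$.
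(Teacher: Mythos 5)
Your treatment of parts \eqref{i:P1} and \eqref{i:Pbig}, the uniqueness statement, the non-containment $W_{P,i}\notin\UU^3_q$, and the maximality argument all match the paper's proof in substance and are correct (the paper recovers $P$ from the Type~B vertices rather than the Type~A vertices in the uniqueness step, but both work). The overall strategy for showing $W_{P,i}\in\Sq$ — take a witness $\bw$, use $A=\{i\}$ and $A=\{l\}$ to force $w_i\le 2$ and $w_l\le 1$, then case-split on the resulting shapes of $\bw$ — is also the paper's strategy.

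The gap is in your dispatch of the mixed cases. You assert that when $C$ meets more than one vertex type, the argument "follows from \cref{p:square-free} and \cref{c:reduce to square-free}." Those results only certify that certain \emph{pairs} are edges of $\Sq$; by \cref{t:shift} the Scarf condition must be verified for every subset $C$, and knowing that all edges of $C$ lie in $\Sq$ does not imply $C\in\Sq$ — indeed \cref{t:minimalnonfaces3}\eqref{i:nonface-triangles} exhibits a triangle all of whose edges are Scarf but which is a minimal nonface, and avoiding exactly such configurations is the whole point of this proposition. The hard case, which your sketch does not engage, is a square-free witness $\bw=\be_i+\be_j+\be_k$ (i.e.\ $w_i=1$) against a mixed subset $C$ containing a Type~A vertex. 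There the paper's proof must use the specific structure of $W_{P,i}$: the inequality for $A=\{i,j,k\}$ forces some $\bv=2\be_i+\be_\ell\in C$ with $\ell\in\{j,k\}\cap P$, and then the inequality for $A=\{j,k\}$ demands a square-free vertex of $C$ whose support meets $\{j,k\}$ twice — impossible because Type~B vertices have support disjoint from $P$ while the only Type~C candidate is $\bw$ itself. Your proposed specializations $A=P$ and $A=[q]\setminus P$ do not by themselves produce this contradiction; the argument needs the two-step interplay above, and without it the claim that the sub-cases are "manageable" is unsubstantiated. As written, the proof of \eqref{i:P2} is therefore incomplete at its central step.
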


\begin{proof}  Part~\eqref{i:P1} follows from the definition of $W_{P,i}$ by noting that in this case, $P=\{i\}$ and the first set in the definition is empty. The remaining two sets include all vertices in $\U_q^3.$ Similarly,~\eqref{i:Pbig} follows from noting that the second set in the definition of $W_{P,i}$ is empty and the remaining vertices are in $\U_{\be_i}^3$. If $P \ne [q]$, the containment will be proper.

Assume now $2\le |P|\le q-3$. Note that in this scenario, $q \geq 5.$ We prove first $W_{P,i}\in \Sq$.  By \cref{t:char-face}\eqref{i:permute}, it is enough to prove the case $P=[p]$ and $i=1$ for some $p$ with  $2\le p\le q-3$. In this case, 
 $W=W_{[p],1}$ is defined as
 \begin{equation}\label{e:weird}
\begin{array}{lcl}
\{2\be_{1}+\be_{j} \st j=2,\ldots,p\}
& \cup & 
\{\be_j + \be_k + \be_l \st p+1 \leq j < k < l \leq q\} 
\\
&\cup & 
\{\be_1 + \be_j + \be_k \st  1 < j < k \leq q\}.
\end{array}
\end{equation}
Assume $W\notin \Sq$. Let $W'\subseteq W$ be a minimal subset of $W$ (under inclusion) such that $W'\notin \Sq$. 

By \cref{t:char-face} there exists a witness $\bw \in \N_{q}^3 \setminus W'$ such that 
    \begin{equation}
    \label{e:w}
    \bw\cdot \be_A \leq \max\{\ba\cdot \be_A \st \ba \in W'\} \qforall A \subseteq [q] 
    \end{equation}
    Observe that 
    \begin{align}
    \begin{split}
    \label{e:w2}
    &\max\{\ba\cdot \be_1 \st \ba \in W'\}\le \max\{\ba\cdot \be_1 \st \ba \in W\} =2\qand\\
    &\max\{\ba\cdot \be_i \st \ba \in W'\}\le \max\{\ba\cdot \be_i \st \ba \in W\}=1 \qforall i\in [q]\smallsetminus \{1\}
    \end{split}
    \end{align}
 and hence \eqref{e:w}, with $A=\{1\}$ and $A=\{i\}$ respectively,  implies 
 \begin{equation}
 \label{e:w3}
w_1\le 2  \qand w_i\le 1 \qforall i\in [q]\smallsetminus \{1\}.
\end{equation}
 
 If $w_1 = 0$, then \eqref{e:w3} implies $\bw=\be_j+\be_k+\be_l$ with $1<j<k < l\le q$. Then  \eqref{e:w}  gives 
    $$
        3=\bw\cdot \be_{\{j,k,l\}}  \le   \max\{\ba\cdot \be_{\{j,k,l\}} \st \ba \in W'\}\le 3.
    $$
There exists thus $\bv\in W'$ such that $\bv\cdot \be_{\{j,k,l\}}=3$. Since $j,k,l>1$ and $W'\subseteq W$, this implies $\bv=\bw$, a contradiction.

If $w_1 = 1$, then \eqref{e:w3} implies $\bw=\be_1+\be_j+\be_k$ for some $1<j<k\le q$. As above, this implies that there exists $\bv\in W'$ such that $\bv\cdot \be_{\{1,j,k\}}=3$.  It follows that $$\bv=2\be_1+\be_j \qor \bv=2\be_1+\be_k \qor \bv=\be_1+\be_j + \be_k.$$  Since $\bv\neq \bw$, the only options are $\bv=2\be_1+\be_\ell$ with $\ell\in \{j,k\}$. Moreover, since $\bv\in W'\subseteq W$, the definition of $W$ implies $\ell\le p$; in particular $j\leq p$. 

Now  $\bw\cdot \be_{\{j,k\}}=2$ and \eqref{e:w} shows there must exist $\bv'\in W'$ with $\bv'\cdot \be_{\{j,k\}}\ge 2$. 
It follows that $\bv'$ is a square-free vector $$\bv'=\be_j+\be_k+\be_l \qwith p+1 \leq j,k,l   \qor  \bv'=\be_1+\be_j+\be_k .$$
The first scenario is impossible because we know $j\leq p$ and the second one is impossible because $\bv \neq \bw$, ending our argument for the case $w_1=1$.

If $w_1 = 2$, then    $\bw=2\be_1+\be_j$ with $j\in [q]$. 
Then using  \eqref{e:w} we have then 
        $$
            \bw\cdot \be_{\{1,j\}} = 3\le  \max\{\ba\cdot \be_{\{1,j\}} \st \ba \in W'\}\leq 3.
        $$
There exists thus $\bv\in W'$ such that $\bv\cdot \be_{\{1,j\}}=3$, so our only option is $\bv=2\be_1+\be_j=\bw$,  a contradiction.

Now we show that $W_{[p],1}$ is a facet of $\Sq$. Recall that $q \geq 5$. Assume that $\ba \in \Nrq \setminus W_{[p],1}$. Below we use \eqref{e:weird} to consider each possible $\ba$, and in each case we use the list in \cref{l:types}, or equivalently \cref{t:minimalnonfaces3}, to show that there is a vertex $\bb$ of $W_{[p],1}$ such that $\{\ba,\bb\} \notin \Sq$, showing that $W_{[p],1} \cup \{\ba\} \notin \Sq$.  

\begin{itemize}
\item $\ba=3\be_a$ with $a\in [q]$. Then let $\bb=\be_1+\be_2+\be_3$.
\item $\ba=2\be_a+\be_b$ with  $1$, $a$, $b$ distinct. Then let $\bb=\be_1+\be_b+\be_c$ with  $1$, $a$, $b$, $c$ distinct.
\item $\ba=2\be_a+\be_1$  with  $1$, $a$  distinct. Then let $\bb=\be_1+\be_b+\be_c$  with  $1$, $a$, $b$, $c$ distinct.
\item $\ba=2\be_1+\be_a$ with $a>p$. Then let $\bb=\be_a+\be_b+\be_c$  when $p< b < c \leq q$, and  $a$, $b$, $c$ distinct.
\item $\ba=\be_a+\be_b+\be_c$ for some $a<b<c$ where $1<a \leq p$. Then let $\bb=2\be_1+\be_a$. 
\end{itemize}
We conclude that $W_{[p],1}$ is a facet of $\Sq$.

Finally, notice that $W_{P,i}$ is not in $\U_q^3$ since it contains a vertex that is not square-free. Moreover, since $q\geq 5$, $W_{P,i}$ contains vertices $\bb=2\be_i+e_j$ with $j\in P\setminus\{i\}$ and $\bc = \be_k + \be_l + \be_u$ with $k,l,u \in [q]\setminus P$. Since $\bb, \bc \in W_{P,i}$ and $\supp(\bb) \cap \supp (\bc) = \emptyset$, then $W_{P,i}\neq \U_{\ba}^3$ for any $\ba \in \NN^q.$ Thus $W_{P,i} \notin \UU_q^3$. 

For the final statement, note that $2 \leq |P|, |Q| \leq q - 3$ implies all three sets in the definition of $W_{P, i} = W_{Q, j}$ are non-empty. In particular, a nonsquare-free vector in $W_{P, i} = W_{Q, j}$ implies $i = j$, and the set $\{\be_j + \be_k + \be_l \st j < k < l \leq q, \quad j,k,l \in [q]\setminus P\}$ implies $P = Q$.
\end{proof}

\begin{theorem}[{\bf The facets of $\Sq$}]\label{thm:complete-list-of-faces}
    The facets of $\Sq$ are of one of the following types: 
    \begin{enumerate}
    \item\label{i:2ea3}  (when $q\geq 1$)\, $\mathcal U_{2\be_a}^3=\{2\be_a+\be_i\colon i\in [q]\}$\, for some $a\in [q]$; 
    \item\label{i:eab3}  (when $q\geq 2$)\, $\mathcal U_{\be_a+\be_b}^3=\{\be_a+\be_b+\be_i\colon i\in [q]\}$\, for some $a,b\in [q]$ with $a\ne b$;
    \item\label{i:ea3}  (when $q\geq 3$)\, $\mathcal U_{\be_a}^3=\{\be_a+\be_i+\be_j\colon i,j\in [q], i<j\}$\, for some $a\in [q]$;
     \item\label{i:q3} (when $q\geq 4$)\, $\U_q^3=\{\be_i+\be_j+\be_k\colon 1\le i<j<k\le q \}$;    \item\label{i:WPa} (when $q\geq 5$)\, The simplex $W_{P,a}$ for some $P\subseteq [q]$ with $2\le |P|\le q-3$ and $a\in P$. 
    \end{enumerate}
In particular, 
$$\Sq=\UU^3_q \cup 
\langle W_{P,a} \st P\subseteq [q], \  2\le |P|\le q-3, \  a\in P \rangle.$$

\end{theorem}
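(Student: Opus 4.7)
The plan is to prove the theorem in two steps: first, each listed type yields a facet of $\Sq$; second, every facet of $\Sq$ appears in the list. Direction (i) follows from prior results: reading off \cref{d:Urq}, the facets of $\UU_q^3$ are precisely $\U_{2\be_a}^3$, $\U_{\be_a+\be_b}^3$, $\U_{\be_a}^3$, and $\U_q^3$, so by \cref{t:Urq-Srq} these simplices are facets of $\Sq$, yielding types \eqref{i:2ea3}--\eqref{i:q3}; type \eqref{i:WPa} is a facet of $\Sq$ by \cref{p:weird}\eqref{i:P2}. The final display $\Sq = \UU_q^3 \cup \langle W_{P,a} : 2 \le |P| \le q-3,\, a \in P\rangle$ then follows from direction (ii).

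For direction (ii), let $F$ be a facet of $\Sq$. Every vertex of $\N_q^3$ has one of three shapes, which we call type A ($3\be_a$), type B ($2\be_a + \be_b$ with $a \ne b$), or type C ($\be_a + \be_b + \be_c$ with distinct indices). We proceed by case analysis on which shapes occur in $F$, using \cref{l:types} as the primary source of pairwise obstructions. If $F$ contains some $3\be_a$, then \cref{l:types} forces every other vertex of $F$ to be of the form $2\be_a + \be_j$, so $F \subseteq \U_{2\be_a}^3$ and $F = \U_{2\be_a}^3$ by maximality. If $F$ has type B vertices with two distinct big indices $a \ne b$, then \cref{l:types} forces $\{2\be_a+\be_b, 2\be_b+\be_a\} \subseteq F$, forbids any further type B vertex, and restricts every type C vertex $\bc \in F$ to satisfy either $\{a,b\} \subseteq \supp(\bc)$ or $\supp(\bc) \cap \{a,b\} = \emptyset$. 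The latter option is eliminated by exhibiting $\bw = \be_a + \be_b + \be_i$ (for any $i \in \supp(\bc)$) as a witness against the subface $\{2\be_a+\be_b, 2\be_b+\be_a, \bc\}$, so $F \subseteq \U_{\be_a+\be_b}^3$ and hence $F = \U_{\be_a+\be_b}^3$.

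In the remaining cases $F$ has no type A vertex and its type B vertices (if any) share a common big index $a$. If $F$ has both a type B vertex and a type C vertex, set $P_0 = \{j : 2\be_a + \be_j \in F\}$ and $P = P_0 \cup \{a\}$; the pairwise constraints from \cref{l:types} against each $2\be_a + \be_j$ force every type C vertex in $F$ to contain $a$ in its support or to have support disjoint from $P$, yielding $F \subseteq W_{P,a}$. Then \cref{p:weird} gives $F = W_{P,a}$ when $2 \le |P| \le q-3$ (type \eqref{i:WPa}) and $F = \U_{\be_a}^3$ when $|P| \ge q-2$ (type \eqref{i:ea3}, since $W_{P,a} \subseteq \U_{\be_a}^3$). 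If $F$ has type B but no type C vertex, then $F \subsetneq \U_{2\be_a}^3$, contradicting facetness; and if $F$ has only type C vertices, then $F \subseteq \U_q^3$ so $F = \U_q^3$. The main obstacle is the witness verification in the second case: one must confirm the inequalities of \cref{t:shift} for every $A \subseteq [q]$, which reduces to a short case split based on which of $a, b, i$ belong to $A$ and is driven by the observation that $\bw \cdot \be_A = |A \cap \{a, b, i\}|$ is always dominated by one of $2\alpha+\beta$, $2\beta+\alpha$, or $\iota+\gamma+\delta$ (where $\alpha, \beta, \iota, \gamma, \delta$ are the indicators of $a, b, i, j, k \in A$).
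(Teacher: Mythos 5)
Your proposal is correct and follows essentially the same route as the paper's proof: both directions rest on \cref{t:Urq-Srq} and \cref{p:weird} for sufficiency, and on \cref{l:types} for the pairwise obstructions, the triangle witness $\be_a+\be_b+\be_i$ to rule out square-free vertices disjoint from $\{a,b\}$, and the reduction to $W_{P,a}$ for necessity. Your organization of the case split (by which vertex shapes occur and whether the type B vertices share a common ``big index'') is a cosmetic rearrangement of the paper's two cases in \eqref{e:2cases}, and all the individual steps check out.
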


\begin{proof}
    By \cref{t:Urq-Srq} and \cref{p:weird}, every item on the list above is a facet of $\Sq$. 
    
    Assume now that $F$  is a facet of $\Sq$. If $F\subseteq \U_q^3$, then it follows that $F=\U_q^3$, so we may assume  $F\not\subseteq \U_q^3$. Then $F$ must contain a nonsquare-free vector.

Assume first $3\be_i\in F$ for some $i\in [q]$.  If $F$ contains a vertex $\bv$ other than $3\be_i$, then $\{3\be_i,\bv\}\in \Sq$ and \cref{l:types} implies $\bv\in \U_{2\be_i}^3$ for all $\bv \in F.$ Thus $F\subseteq \U_{2\be_i}^3$, and hence $F=\U_{2\be_i}^3$. 

It remains to consider the case when  $3\be_i\notin F$ for all $i\in [q]$, and $F$ contains an element of the form $2\be_i+\be_j$ with $i\ne j$. Without loss of generality, assume $2\be_1+\be_2\in F$ and set
\[
P=\{u\in [q]\colon 2\be_1+\be_u\in F, u\ne 1\}\cup\{1\}\,.
\]
Without loss of generality, we may assume $P=[p]$, where $1<p\le q$. 
Suppose $F$ contains another nonsquare-free vector $2\be_a+\be_b$ where $a \neq b$ and $a \neq 1$. Then for every $u \in \{2,\ldots,p\}$ 
$$\{2\be_1+\be_u, 2\be_a+\be_b\}\in \Sq$$ and \cref{l:types} implies that $a=u$ and $b=1$.  This means, in particular
that either $2\be_1+\be_u$,with $1 < u \leq p $ are the only nonsquare-free vectors in $F$, or otherwise $p=2$. In other words
\begin{equation} \label{e:2cases}
F\subseteq  \{2\be_1+\be_u\colon u\in [p]\}\cup \U_q^3 \qor
F\subseteq  \{2\be_1+\be_2, 2\be_2+\be_1\}\cup \U_q^3.
\end{equation}

Now suppose $F$ contains a square-free vector $\be_a+\be_b+\be_c$ with $a<b<c$. Then we have 
$$\{2\be_1+\be_u, \be_a+\be_b+\be_c\}\in \Sq  \qwhere 2\le u\le p.$$ 
By \cref{l:types}, we must have 
\begin{equation}
\label{e:u}
a=1 \qor a \geq p+1.
\end{equation}
Now, going back to \eqref{e:2cases}, we will consider each case separately.

\begin{enumerate}
    \item If $F\subseteq  \{2\be_1+\be_u\colon u\in [p]\}\cup \U_q^3 $
then we see from \eqref{e:u} and  \eqref{e:weird} that $F\subseteq W_{[p],1}$. Recall $p\ge 2$. So by \cref{p:weird}, and since we assumed $F$ is a facet, we must have $F=W_{[p],1}$ 
or $F=\U_{\be_1}^3$. 
    \item If $F\subseteq  \{2\be_1+\be_2, 2\be_2+\be_1\}\cup \U_q^3$ then 
take $\be_a+\be_b+\be_c\in \U_q^3$, where $a<b<c$. By \eqref{e:u} we must have $a \neq 2$.
If $a\geq 3$, then 
\begin{equation}
\label{e:nontriangle}
\tau =\{2\be_1+\be_2, 2\be_2+\be_1, \be_a+\be_b+\be_c\} \notin \Sq
\end{equation}
because we can pick a witness $\bw = \be_1 + \be_2 + \be_a \notin \tau$, and for all $A\subseteq [q]$ we have  
    \begin{equation*}
        \bw \cdot \be_A =
        |A\cap \{1,2,a\}| \leq  
        \max \{(2\be_1 + \be_2)\cdot \be_A, (\be_1 + 2\be_2)\cdot \be_A, (\be_a + \be_b + \be_c)\cdot \be_A\}. 
    \end{equation*}
 Thus we must have $a=1$. On the other hand, since $\{ 2\be_2+\be_1, \be_1+\be_b+\be_c\}\in \Sq$, \cref{l:types} shows that $b=2$. 
 We have so far proved 
 \[
F\subseteq  \{2\be_1+\be_2, 2\be_2+\be_1\}\cup \{\be_1+\be_2+\be_c\colon 2<c\le q\}\,.
\]
This implies $F\subseteq \U_{\be_1+\be_2}^3$, and hence $F=\U_{\be_1+\be_2}^3$. 
\end{enumerate}

We have proved that in all possible scenarios $F$ is one of the facets listed above, and this ends our argument. 
\end{proof}

\cref{thm:complete-list-of-faces} provides a formulaic description of all the individual facets of $\Srq.$ To better understand this list, we discuss the geometry of the facets and how they fit together in the following example. 

\begin{example}\label{e:picture-q5-r3} {\bf (Picturing $\SS_5^3$.)}

The facets described in~\cref{thm:complete-list-of-faces}\eqref{i:2ea3}-\eqref{i:q3} are precisely those of $\UU_5^3$, which has five overlapping shifted copies of $\UU_5^2=\SS_5^2$, one in each corner,
just as we found four shifted copies of all of $\SS_4^2$ in $\SS_4^3$, in \cref{e:q34}.

We also have the facets described in \cref{thm:complete-list-of-faces}\eqref{i:WPa}. In this case, $|P|=2$ and $a\in P$, so we have $20$ simplices of the form $W_{\{i,j\},i}$, one for every choice of an ordered pair $(i,j)$ of distinct $i,j \in [q]$.  Each such facet is a $7$-dimensional simplex whose vertices consist of: one vertex of the form $2\be_i + \be_j$; six square-free vertices containing $\be_i$, and one square-free vertex consisting of $\be_a + \be_b + \be_c$ where $\{i,j,a,b,c\} = \{1,2,3,4,5\}.$  The realization of each of these facets is a polytope combinatorially equivalent to a $4$-cross-polytope, with minimal non-faces $\{2\be_i+\be_j,\ \be_a + \be_b + \be_c\}, \{\be_i+\be_j+\be_a,\ \be_i+\be_b+\be_c\}, \{\be_i+\be_j+\be_b,\ \be_i+\be_a+\be_c\}, \{\be_i+\be_j+\be_c,\ \be_i+\be_a+\be_b\}$.
To see this, note that the midpoint between any two vertices in a minimal non-face is the same: $\be_i + (\be_j + \be_a + \be_b + \be_c)/2$; and it is easy to see that the four direction vectors, each pointing from one vertex in a minimal non-edge to the other vertex in that minimal non-edge, are linearly independent.
\end{example}

\medskip

We are now ready to describe all minimal nonfaces of $\Sq$. As a consequence, \cref{t:minimalnonfaces3} shows that $\Srq$ is not a flag complex for $r \geq 3,\ q \geq 5$.

\begin{theorem}[{\bf Minimal nonfaces of $\Sq$}] \label{t:minimalnonfaces3}
     A non-empty subset  $\tau$ of $\N_q^3$ is a minimal nonface of $\Sq$ if and only if $\tau$ is of one of the following forms 
\begin{enumerate}     
\item\label{i:nonface-edges} {\bf (edges)} for distinct indices $a,b,c,d$ in $[q]$
$$
\begin{array}{lll}
\{3\be_a, 3\be_b\} &&\{2\be_a+\be_b, 2\be_b+\be_c\}\\    
\{3\be_a, \be_a + 2 \be_b\} && \{2\be_a+\be_b, \be_b+2\be_c\}\\     
\{3\be_a, 2\be_b+\be_c\} &&\{2\be_a+\be_b, 2\be_c+\be_d\}\\     
\{3\be_a, \be_a+\be_b+\be_c\}&& \{2\be_a+\be_b,\be_b+\be_c+\be_d\}\\   
\{3\be_a, \be_b+\be_c+\be_d\} &&\\
\end{array}
$$
\item\label{i:nonface-triangles} {\bf (triangles)} for distinct $a,b,c,d,e \in [q]$
$$\{2\be_a + \be_b, \be_a + 2\be_b, \be_c + \be_d + \be_e\}.$$
\end{enumerate}
\end{theorem}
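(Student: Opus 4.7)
The plan is to prove both directions of the claimed characterization. For the forward direction, I would verify that each listed edge $\{\ba, \bb\}$ is a nonface by checking that its form matches no entry of Lemma~\ref{l:types}: a case split by type (T-T, T-D, T-S, D-D, D-S, S-S) enumerates exactly the pairs missing from that list, matching the first block of the theorem. Since every singleton is a face, each such edge is automatically a minimal nonface. For the triangle $\tau = \{2\be_a + \be_b, \be_a + 2\be_b, \be_c + \be_d + \be_e\}$, I would verify that its three pairs all appear in Lemma~\ref{l:types} (two of the form $\{2\be_i + \be_j, \be_u + \be_v + \be_w\}$ and one of the form $\{2\be_u + \be_i, 2\be_i + \be_u\}$), so every proper subset is a face; then exhibit the witness $\bw = \be_a + \be_b + \be_c$ to show $\tau$ itself is not a face, which is precisely the computation already performed in~\eqref{e:nontriangle}.

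For the reverse direction, suppose $\tau$ is a minimal nonface with $|\tau| \geq 3$. Then every proper subset of $\tau$ is a face, so in particular every pair in $\tau$ is an edge of $\Sq$, and by Theorem~\ref{thm:complete-list-of-faces} the set $\tau$ is not contained in any facet. I would split on the types of vertices appearing in $\tau$. If $3\be_a \in \tau$, Lemma~\ref{l:types} forces every other vertex to have the form $2\be_a + \be_j$, so $\tau \subseteq \U_{2\be_a}^3$, a facet, contradicting the assumption. If all vertices of $\tau$ are square-free, then $\tau \subseteq \U_q^3$, again a facet. Otherwise $\tau$ contains a vector $2\be_a + \be_b$; without loss of generality $2\be_1 + \be_2 \in \tau$.

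Now split on whether $2\be_2 + \be_1 \in \tau$. If so, Lemma~\ref{l:types} rules out any further non-square-free vertex (such a vertex would need to be adjacent to both $2\be_1 + \be_2$ and $2\be_2 + \be_1$, which is impossible), and for every square-free $\be_c + \be_d + \be_e \in \tau$ the D-S adjacency cases force either $\{c,d,e\} \supseteq \{1,2\}$ or $\{c,d,e\} \cap \{1,2\} = \emptyset$. If the former holds for every square-free vertex, then $\tau \subseteq \U_{\be_1 + \be_2}^3$, a facet, a contradiction; otherwise $\tau$ contains a triangle of the stated form as a subset, and minimality forces equality. If instead $2\be_2 + \be_1 \notin \tau$, every remaining non-square-free vertex of $\tau$ must have the form $2\be_1 + \be_j$. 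Setting $J' = \{j : 2\be_1 + \be_j \in \tau\}$ (with $2 \in J'$) and $P = J' \cup \{1\}$, the D-S analysis in Lemma~\ref{l:types} forces every square-free $\be_S \in \tau$ to satisfy $1 \in S$ or $S \cap J' = \emptyset$, placing $\tau \subseteq W_{P, 1}$. When $2 \leq |P| \leq q - 3$ this is a facet directly; when $|P| > q - 3$ the middle set of $W_{P, 1}$ is empty and the remaining vertices of $W_{P, 1}$ all lie in the facet $\U_{\be_1}^3$. In either case $\tau$ is a face, a contradiction.

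The main obstacle is the final case, which requires careful bookkeeping to identify the correct facet absorbing $\tau$ across every value of $|P|$, in particular verifying that the boundary values $|P| \in \{q-2, q-1, q\}$ do not produce genuinely new minimal nonfaces but collapse into $\U_{\be_1}^3$; the rest of the argument is essentially an organized application of Lemma~\ref{l:types} and Theorem~\ref{thm:complete-list-of-faces}.
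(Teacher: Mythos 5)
Your proposal is correct and follows essentially the same route as the paper: both directions rest on the edge classification of \cref{l:types}, the witness computation in \eqref{e:nontriangle} for the triangle, and, for the converse, absorbing $\tau$ into one of the facets $\U_q^3$, $\U_{2\be_a}^3$, $\U_{\be_a+\be_b}^3$, or the simplex $W_{P,1}$ (with \cref{p:weird} covering the boundary values of $|P|$ exactly as you note). The only difference is organizational — you case-split on whether $2\be_2+\be_1\in\tau$ up front, whereas the paper first defines $P$ and derives $\be_1+2\be_2\in\tau$ as its Claim~3 — but the underlying argument is the same.
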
  

\begin{proof}
\eqref{i:nonface-edges}: The listed edges are precisely the edges that are not of the forms described in \cref{l:types}.  

\eqref{i:nonface-triangles}: $(\Longleftarrow)$ Assume, without loss of generality,  
$\tau=\{2\be_1 + \be_2, \be_1 + 2\be_2, 
\be_3 + \be_4 + \be_5\}$. The conclusion follows from \eqref{e:nontriangle}. 

$(\Longrightarrow)$ Now assume $|\tau|>2$.
Since $\tau$ is a minimal nonface with $|\tau|>2$, every pair of vectors in $\tau$ must form an edge of $\Sq$. In other words,
\begin{equation}\label{e:tau-edge}
\{\bu,\bv\}\in \Sq \qif \bu,\bv \in \tau.
\end{equation}

\begin{Claim1}
 $\tau$ cannot consist of only square-free vectors. 
\end{Claim1}
Claim~1 follows directly from \cref{t:Urq-Srq}.

\begin{Claim2} $3\be_i \notin \tau$ for any $i \in [q]$
\end{Claim2}
    Assume without loss of generality that $3\be_1 \in \tau$. Then by \eqref{e:tau-edge},  $\{3\be_1, \bv\}$ is an edge of $\Sq$, which by \cref{l:types} is equivalent to $\bv = 2\be_1 + \be_j$. 
         Therefore  every other vector in $\tau$ is of the form $2\be_1 + \be_j$. This implies $\tau$ is contained in the facet $\U_{2\be_1}$, and thus $\tau$ is a face of $\Sq$, a contradiction.  This settles Claim~2.

By Claim~1 and Claim~2 we can assume, without loss of generality, that $$2\be_1+\be_2 \in \tau.$$
Set
    \begin{equation}
    \label{e:TheP} 
    P=\{k\in [q]\smallsetminus \{1\}\colon 2\be_1+\be_k\in \tau\}\cup\{1\}\,.
    \end{equation}
By \cref{l:types} and in view of~\eqref{i:nonface-edges}, we have that if $\bb\in \tau$ is not of the form listed in \eqref{e:TheP}, then it can have one of the following forms. 

\begin{itemize}
\item $\bb=\be_1+\be_u+\be_v$ for distinct $1,u,v$.

\item $\bb=\be_u+\be_v+\be_w$ for distinct $1,u,v,w$.
 By \cref{l:types}, $\{2\be_1+\be_l,\be_u+\be_v+\be_w\}\in \Sq$ if and only if $l \notin\{u,v,w\}$, and we see that 
\begin{equation}
\label{e:uvw}
\bb=\be_u+\be_v+\be_w\in \tau \implies u,v,w\notin P.
\end{equation}
\item  $\bb=\be_1+2\be_2$.
 \end{itemize}
    
\begin{Claim3}
$\be_1+2\be_2 \in \tau$. 
\end{Claim3}
 
If $\be_1+2\be_2\notin \tau$, then we use \eqref{e:uvw} to see that $\tau\subseteq W_{P,1}$, and \cref{p:weird} implies $\tau\in \Sq$, a contradiction.

\begin{Claim4}
$\tau\smallsetminus \{2\be_1+\be_2, 2\be_2+\be_1\}\subseteq \mathcal U_q^3$. 
\end{Claim4}
By~\eqref{e:tau-edge}  $\{\be_1+2\be_2,2\be_1+\be_k\}\in \Sq$ for all $k\in P$. However,  \cref{l:types} implies  $P=\{1,2\}$ and this gives $\tau\smallsetminus \{2\be_1+\be_2, 2\be_2+\be_1\}\subseteq \mathcal U_q^3$. This finishes the proof of Claim~4.

\begin{Claim5} If $\bb \in 
\tau\smallsetminus \{2\be_1+\be_2, 2\be_2+\be_1\}$, then $\bb=\be_u+\be_v+\be_w$ where $1,2,u,v,w$ are distinct. 
\end{Claim5}

By our previous arguments, $\bb$ must be square-free. Assume 
$\bb=\be_u+\be_v+\be_w$ where $u<v<w$ and $u,v,w \notin P.$ If $u=1$, then
by~\eqref{i:nonface-edges} we know $\{\bb, 2\be_2+\be_1\}\notin \Sq$, contradicting the minimality of $\tau$. So $u>1$.

By~\eqref{e:uvw} and the fact that $P=\{1,2\}$ established in Claim~4,
$u>2$. This finishes our proof of Claim~5.

Now choose $\bb=\be_u+\be_v+\be_w\in \tau$ with $2<u<v<w$, and note that $\sigma=\{2\be_1+\be_2, \be_1+2\be_2, \bb\}$ is a nonface of $\Sq$ by the reverse implication that we just proved. Since $\tau$ is a minimal nonface of $\Sq$ containing $\sigma$, we conclude that $\tau=\sigma$. 
\end{proof}

\section{A general Morse Theorem}\label{s:morse}

Recall that our goal is to show the subcomplex $\Srq$ of $\Trq$ supports a free resolution of $\Erq$. 
To achieve this goal, we will use a homogenized version of Morse matchings due to Batzies and Welker~\cite{BW}. In this section we develop a general matching strategy that uses the minimal nonfaces of a simplicial complex. We will then use this strategy on the minimal nonfaces of $\Sq$ that were described in previous sections. Specifically, \cref{p:nice-Morse} can be used to show that under certain conditions, all the nonfaces of $\Srq$ can be eliminated from $\Trq$ via a homogeneous acyclic matching, and as a  result the remaining faces, which form  $\Srq$, support a resolution of $\Erq$.

We start by setting up the notation and language for discrete Morse theory, which was first developed by Forman~\cite{For} in a topological setting, and later adapted to the study of free resolutions of monomial ideals by Batzies and Welker~\cite{BW} by using a translation of Chari~\cite{Char}.

Let $V$ be a finite set, and $Y \subseteq 2^V$ a  set of subsets of $V$. We call the elements of $Y$ {\bf cells}. A cell of cardinality $n$ is called an $(n-1)${\bf -cell}. 
We define the directed graph $G_Y$ as a graph with vertex  set $\{\sigma \mid \sigma \in Y\}$ and
with directed edges $E_Y$ consisting of $$\sigma\to \sigma' \qwith
\sigma'\subseteq \sigma \qand |\sigma|=|\sigma'|+1.$$

A {\bf matching} of $G_Y$ is a set $A\subseteq E_Y$ of edges of $G_Y$
with the property that each cell of $Y$ occurs in at most one edge of
$A$.  Given a matching $A$, let the directed graph $G_Y^A$ be the same graph as $G_Y$ except that the direction of the arrows in $A$ are reversed.
The matching $A$ is said to be {\bf acyclic} if $G^A_Y$ contains no directed cycles. The cells of $Y$ that do not appear in the edges of the matching are called $A${\bf -critical cells}.

Given an ideal $I$, minimally generated by $t$ monomials, we consider $Y$ to be labeled  Taylor complex $\TT$ of $I$. Recall that the label of $\sigma\in \TT$ is denoted $\m_\sigma$, and it is equal to the lcm of the monomial labels of the vertices in $\sigma$. In this context, we say that a matching $A$ on $G_Y$ is {\bf homogeneous} if $\m_{\sigma}=\m_{\sigma'}$ for all directed edges $\sigma\to \sigma'\in A$. The following special case of a result of Batzies and Welker \cite[Proposition~1.2, Theorem~1.3]{BW} translates information given by a homogeneous acyclic matching on the Taylor complex of an ideal into a CW-complex supporting a free resolution of the ideal. 

\begin{theorem}[{\bf Resolutions from acyclic matchings}, see \cite{BW}] 
\label{t:BW}Let $I$ be a monomial ideal and let $\TT$ be the Taylor complex of $I$. If $A$ is a homogeneous acyclic matching on $G_\TT$, then there is a CW-complex $\mathcal{X}_A$ supporting a free resolution of $I$ whose $i$-cells are in one to one correspondence with the $A$-critical $i$-cells of the Taylor complex of $I$.
\end{theorem}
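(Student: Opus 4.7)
The plan is to invoke discrete Morse theory in the form developed by Forman and reformulated combinatorially by Chari, then carefully track the monomial grading using the homogeneity hypothesis. The starting point is Taylor's own result: the labeled simplex $\TT$ on the generators of $I$, with the standard homogenization of its cellular chain complex, supports a (typically non-minimal) free resolution $\F$ of $I$. The task is to use the acyclic matching $A$ to collapse $\F$ down to a smaller, but still exact, complex indexed by the $A$-critical cells.

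First I would invoke the Forman--Chari theorem: for any acyclic matching $A$ on the face poset of a regular CW complex $X$, there is a CW complex $X_A$, homotopy equivalent to $X$, whose $i$-cells are in bijection with the $A$-critical $i$-cells of $X$. The attaching data for $X_A$ are encoded by \emph{gradient paths} in $G_X^A$, namely alternating sequences $\sigma_0 \to \sigma_1 \leftarrow \sigma_2 \to \sigma_3 \leftarrow \cdots$ where forward arrows lie in $E_X$ and backward arrows lie in $A$. Acyclicity of $A$ guarantees that each gradient path is finite, so the induced incidence coefficients between critical cells are well-defined integers (signed sums over all gradient paths between them). Applying this to $X = \TT$ yields a CW complex $\X_A$ with the correct cell count.

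The second step is to promote the cellular chain complex of $\X_A$ to a complex $\F_A$ of free $\MS$-modules, using the multidegree labels $\m_\sigma$ inherited from $\TT$. Concretely, each critical cell $\sigma$ contributes a summand $\MS(-\m_\sigma)$, and the differential is obtained by homogenizing the gradient-path incidences, exactly as in Taylor's construction but now summed over paths. This is where the homogeneity of $A$ is essential: along a gradient path, each reverse arrow $\sigma_{2k-1} \leftarrow \sigma_{2k}$ belongs to $A$, so $\m_{\sigma_{2k-1}} = \m_{\sigma_{2k}}$, meaning the ``jump'' in multidegree only happens along the forward arrows. Consequently the monomial ratios $\m_{\sigma_{i+1}}/\m_{\sigma_i}$ multiply to a well-defined monomial in $\MS$, and the homogenized differential produces honest homogeneous maps between the free modules.

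Finally, one must check that $\F_A$ is acyclic (in positive degrees) with $H_0(\F_A) \cong I$. The conceptual reason is that the collapses prescribed by $A$ correspond to elementary collapses of the underlying CW complex $\TT$, and iterating them produces $\X_A$ up to homotopy equivalence; homogenization preserves this because the matched pairs $(\sigma,\sigma')$ carry identical multidegree and hence cancel as a free summand of $\F$ along with its matched partner, using the standard ``splitting off a trivial complex'' argument for mapping cones of identity maps. I would formalize this by an induction on $|A|$, at each step removing one matched pair from $\F$ via a homogeneous isomorphism of complexes identifying $\F$ with the mapping cone of a multidegree-preserving quasi-isomorphism. The main obstacle, and the technical heart of the argument, is verifying that this inductive splitting is genuinely homogeneous and that the composite of the splittings reproduces exactly the gradient-path differentials on $\F_A$; here one needs the acyclicity of $A$ to ensure that the induction terminates and that no circular dependencies among matched pairs obstruct the simultaneous cancellation.
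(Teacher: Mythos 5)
This statement is quoted in the paper directly from Batzies--Welker (\cite{BW}, Proposition~1.2 and Theorem~1.3) and is not proved there, so there is no in-paper argument to compare against; your sketch is essentially a faithful reconstruction of the standard discrete-Morse-theoretic proof from \cite{BW}. Two points would need care in a full write-up: the Morse complex $\mathcal{X}_A$ is in general not a \emph{regular} CW complex, so ``supports a free resolution'' must be taken in the graded CW sense of \cite{BW} rather than via the simplicial/regular Bayer--Sturmfels criterion; and in the inductive cancellation step the coefficient linking a matched pair $(\sigma,\sigma')$ in the homogenized Taylor differential is $\pm\,\m_\sigma/\m_{\sigma'}$, which is a unit precisely because the matching is homogeneous --- this is exactly where that hypothesis enters, and is worth stating explicitly rather than leaving implicit in the phrase ``mapping cones of identity maps.''
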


\begin{remark}
\label{r:BW}
If the set $\Delta$ of $A$-critical cells of $\TT$ is a simplicial complex, then $\Delta$ itself supports a free resolution of $I$. 
Indeed, in this case, it can be seen that the differential of the CW-complex $\mathcal {X}_A$ described in \cite[Lemma~7.7]{BW} coincides with the differential of the homogenized cell complex of $\Delta$, up to the identifying the cells of the two complexes; see also \cite[Proposition~5.2]{CK24}.
\end{remark}

\begin{lemma}[{\bf Cluster Lemma} {\cite[Lemma~4.2]{Jo}}]\label{clusterlemma} 
Let $V$ be a set and $Y\subseteq 2^V$, $(Q,\sq)$ a poset and $\{Y_q\}_{q\in Q}$ a partition
of $Y$ such that
$$
\text{If $\sigma\in Y_q$ and $\sigma'\in Y_{q'}$ satisfy $\sigma\supseteq \sigma'$, then $q\sq q'$.}
$$
Let $A_q$ be an acyclic matching on $G_{Y_q}$ for each $q$. Then $A\coloneqq \bigcup_{q\in Q}A_q$ is an acyclic matching on $G_{Y}$. 
\end{lemma}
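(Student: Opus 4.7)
The plan is to verify the two defining properties of an acyclic matching in turn. First I would check that $A = \bigcup_{q \in Q} A_q$ is a matching on $G_Y$, and then I would address the more substantive question of acyclicity by tracking how the partition classes behave along a hypothetical directed cycle in $G_Y^A$.

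For the matching property, the key observation is that since $\{Y_q\}_{q \in Q}$ partitions $Y$, every cell $\sigma \in Y$ lies in a unique $Y_q$. Each edge of $A_q$ is an edge of $G_{Y_q}$, so both of its endpoints lie in $Y_q$. Therefore a cell $\sigma \in Y_q$ can appear only in edges of $A_q$ (and not in edges of $A_{q'}$ for any $q' \ne q$), and since $A_q$ is a matching on $G_{Y_q}$ it appears in at most one such edge. Hence $A$ is a matching on $G_Y$.

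For acyclicity, I would argue by contradiction: assume $G_Y^A$ contains a directed cycle $\sigma_1 \to \sigma_2 \to \cdots \to \sigma_n \to \sigma_1$, and let $q_i \in Q$ be defined by $\sigma_i \in Y_{q_i}$. Each directed edge of the cycle is of one of two types. If $\sigma_i \to \sigma_{i+1}$ is an original edge of $G_Y$ (not in $A$), then $\sigma_i \supseteq \sigma_{i+1}$ with $|\sigma_i| = |\sigma_{i+1}| + 1$, and the hypothesis on the partition forces $q_i \sq q_{i+1}$. If instead the arrow comes from reversing an edge of $A \subseteq A_{q}$ for some $q$, then both $\sigma_i$ and $\sigma_{i+1}$ lie in $Y_q$, giving $q_i = q_{i+1}$, and in particular $q_i \sq q_{i+1}$. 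Traversing the cycle we obtain $q_1 \sq q_2 \sq \cdots \sq q_n \sq q_1$, and antisymmetry of $\sq$ forces all $q_i$ to equal a common $q$. The entire cycle therefore lies inside $G_{Y_q}^{A_q}$, contradicting the assumed acyclicity of $A_q$.

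There is no real obstacle here; the main subtlety is simply keeping the orientation conventions straight, since non-matching arrows descend in cardinality whereas matching arrows ascend after being reversed in $G_Y^A$. It is precisely the one-way nature of the hypothesis $\sigma \supseteq \sigma' \Rightarrow q \sq q'$ that prevents the poset coordinate from decreasing along non-matching arrows, while reversed matching arrows are forced to keep the poset coordinate constant because the matchings $A_q$ live strictly inside the parts of the partition.
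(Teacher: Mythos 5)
Your argument is correct: the partition hypothesis forces the poset coordinate to be weakly increasing along non-matching (downward) arrows and constant along reversed matching arrows, so antisymmetry traps any directed cycle inside a single $G_{Y_q}^{A_q}$, contradicting acyclicity of $A_q$. The paper does not prove this lemma but cites it from Jonsson, and your proof is essentially the standard argument given there.
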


\begin{lemma}{\cite[Lemmas~3.2, 3.3]{Morse}} \label{matching-lemma}
Let $Y\subseteq 2^V$. 
\begin{enumerate}[\quad\rm(1)] 
\item\label{i:matchingY} If  $Y \subseteq Y'\subseteq 2^V$ and  $A\subseteq E_Y$, then $A$ is an acyclic matching of $G_{Y'}$ if and only if $A$ is an acyclic matching of $G_Y$. 
\item\label{i:matchingA} If $A$ is an acyclic matching of $G_Y$ and $A'\subseteq A$, then $A'$ is an acyclic matching of $G_Y$ as well.  
\item\label{i:matchingv}  If $v\in V$, then 
\begin{equation*}
A_Y^{v} = \big\{\sigma\to \sigma'\in E_Y\st v\in \sigma   {\text{ and }} \sigma'=\sigma\smallsetminus \{v\}\big\}
\end{equation*}
is an acyclic matching on $G_Y$. 
\end{enumerate}
\end{lemma}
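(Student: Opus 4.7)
The plan is to prove the three parts of the lemma in order, dealing with the matching property and the acyclicity separately in each case. For (1), the matching condition transfers between $G_Y$ and $G_{Y'}$ transparently because $A\subseteq E_Y$ only involves cells of $Y$, so the requirement that each cell lies in at most one edge of $A$ is the same whether read in $G_Y$ or $G_{Y'}$. For acyclicity, the direction ``$A$ acyclic on $G_{Y'}$ implies $A$ acyclic on $G_Y$'' is immediate: $G_Y^A$ sits inside $G_{Y'}^A$ as a directed subgraph, since the forward edges $E_Y\setminus A$ lie in $E_{Y'}\setminus A$ with the same orientation and the reversed $A$-edges are identical. The substantive direction requires showing that any directed cycle in $G_{Y'}^A$ must in fact live entirely inside $G_Y^A$. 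The key inputs are that every reversed edge comes from $A\subseteq E_Y$ and hence has both endpoints in $Y$, and that the matching condition forbids two consecutive reversed edges in any directed cycle. A careful analysis of how cycle-forward edges could route through cells of $Y'\setminus Y$—where no reversed edge can be incident—should allow one to either reroute the cycle inside $G_Y^A$ or derive a direct contradiction with the matching property of $A$.

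For (2), the matching property is inherited trivially by any $A'\subseteq A$. For acyclicity, suppose $A$ is acyclic on $G_Y$ but a cycle $C$ exists in $G_Y^{A'}$. The two graphs $G_Y^{A'}$ and $G_Y^A$ differ precisely on the edges in $A\setminus A'$, which appear reversed in $G_Y^A$ but forward in $G_Y^{A'}$. If $C$ uses no such edge, then $C$ is already a cycle in $G_Y^A$, contradicting acyclicity. Otherwise, I would pick an edge $\sigma\to\sigma'$ of $C$ with $(\sigma\to\sigma')\in A\setminus A'$; since $\sigma$ and $\sigma'$ are $A$-matched to one another, the matching property of $A$ prevents either of them from being incident to any other $A$-edge, which in turn forces the edges of $C$ adjacent to $\sigma\to\sigma'$ to be non-$A$ forward edges (and hence to agree in direction in $G_Y^A$ and $G_Y^{A'}$). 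Iterating this local rigidity through the cycle should produce either a violation of the matching property of $A$ or a genuine cycle in $G_Y^A$, the desired contradiction.

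Part (3) is the most concrete. For the matching property of $A_Y^v$, each cell $\tau\in Y$ lies in at most one edge: if $v\in\tau$ and $\tau\setminus\{v\}\in Y$, then $\tau\to\tau\setminus\{v\}$ is the edge, while if $v\notin\tau$ and $\tau\cup\{v\}\in Y$, then $(\tau\cup\{v\})\to\tau$ is the edge; no cell can be in two edges of this form. For acyclicity I would use the invariant $f(\sigma)=|\sigma\setminus\{v\}|$. Every reversed $A_Y^v$-edge $\sigma'\to\sigma'\cup\{v\}$ leaves $f$ unchanged, while every forward edge $\sigma\to\sigma'$ of $G_Y^{A_Y^v}$ necessarily removes an element other than $v$ (otherwise it would belong to $A_Y^v$), so either $v\notin\sigma$ (and then $v\notin\sigma'$) or $v\in\sigma\cap\sigma'$; in either case $f$ strictly decreases by $1$. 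Since the total change of $f$ around any directed cycle must be zero, every edge of such a cycle would have to be reversed, but two consecutive reversed edges would violate the matching property at their shared vertex. The main obstacle I anticipate is the substantive half of (1): producing a global argument ruling out directed cycles in $G_{Y'}^A$ that genuinely pass through cells of $Y'\setminus Y$, since such cells are locally compatible with cycle structure (both cycle-edges at them being forward), and the contradiction must be extracted from the matching constraint on $A$ applied along the whole cycle.
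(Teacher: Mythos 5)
The paper itself offers no proof of this lemma: it is quoted from \cite{Morse} (Lemmas~3.2 and~3.3), so your argument can only be judged on its own merits. Your part~(3) is complete and correct. Parts~(1) and~(2), however, both break off exactly at the step you yourself flag as the main obstacle, and that step is where the one genuinely needed idea lives: the alternation property of acyclic-matching graphs. For a matching $A$ on $G_Y$ (or $G_{Y'}$), every edge of the modified graph changes cardinality by exactly $\pm1$ --- reversed $A$-edges go up by one, forward edges go down by one --- so around any directed cycle the numbers of up-steps and down-steps are equal. The matching property forbids two consecutive reversed edges (the shared cell would lie in two edges of $A$), and a cyclic $\pm1$-sequence with zero sum and no two adjacent $+1$'s must alternate perfectly (the $k$ up-steps cut the cycle into $k$ gaps, each containing at least one of the $k$ down-steps, hence exactly one). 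Thus every directed cycle alternates strictly between reversed and forward edges.

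This single observation closes both of your gaps. In~(1): every vertex of a putative cycle in $G_{Y'}^A$ is an endpoint of some reversed $A$-edge of the cycle, hence lies in $Y$ because $A\subseteq E_Y$; the configuration you describe at a cell of $Y'\smallsetminus Y$ (both incident cycle-edges forward) is precisely two consecutive down-steps, which alternation rules out. Since all vertices of the cycle lie in $Y$, its forward edges lie in $E_Y$, and the whole cycle already sits in $G_Y^A$, a contradiction. In~(2): by alternation in $G_Y^{A'}$, every forward edge $\sigma\to\sigma'$ of a cycle is immediately preceded by a reversed $A'$-edge $\tau\to\sigma$ with $(\sigma\to\tau)\in A'\subseteq A$ and $\tau\ne\sigma'$; if $(\sigma\to\sigma')$ also belonged to $A$, the cell $\sigma$ would lie in two edges of $A$. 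Hence no forward edge of the cycle is in $A$, and the cycle survives verbatim in $G_Y^A$, again a contradiction. Your proposed ``iteration of local rigidity'' is not needed and, as written, does not by itself terminate in a contradiction; until the alternation argument (or an equivalent) is supplied, parts~(1) and~(2) remain genuine gaps.
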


\begin{construction}[{\bf A partition of $Y$}]\label{c:nice}
    Let $V$ be a finite set, and $N \subseteq Y \subseteq 2^V$
be such that for every $\gamma\in Y$ there exists $\sigma\in N$ such that $\sigma\subseteq \gamma$, and $N$ is given a total order
$\sq$. 

For each $\sigma\in N$, define
\begin{equation}
\label{eq:nice}
Y_\sq(\sigma)=\{\gamma\in Y\colon \sigma= \max_\sq\{\tau\in N\colon \tau\subseteq \gamma\}\}.
\end{equation}
In other words, $Y_\sq(\sigma)$ consists of all cells $\gamma$ of $Y$ for which $\sigma$ is the largest cell of $N$ that is contained in $\gamma$. 

Observe that $\{Y_\sq(\sigma)\}_{\sigma\in N}$ is a partition of $Y$ with the property that for $\sigma,\sigma'\in N$, if $\gamma\in Y_\sq(\sigma)$ and $\gamma'\in Y_\sq(\sigma')$, then $\gamma\supseteq \gamma'$ implies $\sigma\sq \sigma'$. 
\end{construction}

\begin{proposition}[{\bf A Morse matching induced by a special  order}]\label{p:nice-Morse}
 Let $V$ be a finite set, and $N \subseteq Y \subseteq 2^V$
be such that for every $\gamma\in Y$ there exists $\sigma\in N$ such that $\sigma\subseteq \gamma$, and $N$ is given a total order
$\sq$. 
With notation as in \eqref{eq:nice}, assume that $$\omega\colon N\to V$$ is a function such that
\begin{enumerate}
\item\label{i:nice-Morse-sigma} $\omega(\sigma)\notin \sigma$  for each $\sigma \in N$;
\item\label{i:nice-Morse-gamma} if  $\gamma\in Y_\sq(\sigma)$  then $\gamma\cup \omega(\sigma)\in Y_\sq(\sigma)$.
\end{enumerate}
For each $\sigma\in N$, construct a set of directed edges 
\[
A_\sigma=\{\gamma\to \gamma\smallsetminus \{\omega(\sigma)\} \st  \gamma\in Y_\sq(\sigma), \quad \omega(\sigma)\in \gamma\}.
\]
Then $A_\sigma$ is an acyclic matching of $G_{Y_\sq(\sigma)}$, and $A\coloneqq \bigcup_{\sigma\in N}A_\sigma$ is an acyclic matching of $G_{2^V}$ such that the set of $A$-critical cells equal to $2^V\smallsetminus Y$. 
\end{proposition}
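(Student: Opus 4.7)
The plan is to treat $A=\bigcup_\sigma A_\sigma$ as a concatenation of simple ``toggle $\omega(\sigma)$'' matchings on the blocks $Y_\sq(\sigma)$ of the partition produced in \cref{c:nice}, and then to invoke the Cluster Lemma (\cref{clusterlemma}) to propagate acyclicity from each block to all of $G_Y$, finally upgrading to $G_{2^V}$ using \cref{matching-lemma}\eqref{i:matchingY}.

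For a fixed $\sigma\in N$, the first step is to verify that every arrow listed in $A_\sigma$ really is an edge of $G_{Y_\sq(\sigma)}$, i.e.\ that whenever $\gamma\in Y_\sq(\sigma)$ contains $\omega(\sigma)$, the cell $\gamma':=\gamma\setminus\{\omega(\sigma)\}$ still lies in $Y_\sq(\sigma)$. Condition~\eqref{i:nice-Morse-sigma} gives $\omega(\sigma)\notin \sigma$, hence $\sigma\subseteq \gamma'$; since $\{\tau\in N \st \tau\subseteq \gamma'\}\subseteq \{\tau\in N \st \tau\subseteq \gamma\}$, both sets contain $\sigma$, and $\sigma$ is the $\sq$-maximum of the larger one, so it is also the $\sq$-maximum of the smaller one. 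Membership $\gamma'\in Y$ is then secured by reading condition~\eqref{i:nice-Morse-gamma} as closing $Y_\sq(\sigma)$ under the $\omega(\sigma)$-toggle in both directions. Once both endpoints are in $Y_\sq(\sigma)$, the set $A_\sigma$ coincides with the matching $A^{\omega(\sigma)}_{Y_\sq(\sigma)}$ of \cref{matching-lemma}\eqref{i:matchingv}, which is acyclic.

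Next, apply the Cluster Lemma to $\{Y_\sq(\sigma)\}_{\sigma\in N}$---whose compatibility with the total order $(N,\sq)$ is precisely the observation recorded immediately after \eqref{eq:nice} in \cref{c:nice}---to conclude that $A=\bigcup_\sigma A_\sigma$ is an acyclic matching on $G_Y$, and then use \cref{matching-lemma}\eqref{i:matchingY} to upgrade this to an acyclic matching on $G_{2^V}$. For the claim about critical cells, note that each $\gamma\in Y$ sits in a unique block $Y_\sq(\sigma)$; if $\omega(\sigma)\in \gamma$ then $\gamma$ is the upper endpoint of an edge of $A_\sigma$, and if $\omega(\sigma)\notin \gamma$ then condition~\eqref{i:nice-Morse-gamma} places $\gamma\cup\{\omega(\sigma)\}$ in $Y_\sq(\sigma)$, making $\gamma$ the lower endpoint of an edge of $A_\sigma$. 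Conversely, no cell outside $Y$ appears in any edge of any $A_\sigma$, since both endpoints of each such edge lie in $Y_\sq(\sigma)\subseteq Y$. Thus the $A$-critical cells are precisely $2^V\setminus Y$.

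The main obstacle is the well-definedness check in the second paragraph: deriving $\gamma\setminus\{\omega(\sigma)\}\in Y$ from $\gamma\in Y_\sq(\sigma)$ with $\omega(\sigma)\in \gamma$. The maximality half of the block-membership argument is painless, but actually locating $\gamma\setminus\{\omega(\sigma)\}$ inside $Y$ is the subtle point that forces condition~\eqref{i:nice-Morse-gamma} to be read biconditionally (equivalently, that $Y_\sq(\sigma)$ is closed under symmetric difference with $\{\omega(\sigma)\}$); making this interpretation explicit at the outset is the cleanest way to avoid ambiguity and keep the rest of the argument short.
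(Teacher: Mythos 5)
Your argument is essentially the paper's: a toggle-on-$\omega(\sigma)$ matching on each block $Y_\sq(\sigma)$, acyclic by \cref{matching-lemma}\eqref{i:matchingv}, glued together by the Cluster Lemma, then lifted to $G_{2^V}$ by \cref{matching-lemma}\eqref{i:matchingY}, with condition~\eqref{i:nice-Morse-gamma} guaranteeing that every cell of $Y_\sq(\sigma)$ is matched, so that exactly $2^V\smallsetminus Y$ survives. The one place you deviate is the well-definedness check $\gamma\smallsetminus\{\omega(\sigma)\}\in Y_\sq(\sigma)$. You propose to secure $\gamma\smallsetminus\{\omega(\sigma)\}\in Y$ by ``reading condition~\eqref{i:nice-Morse-gamma} biconditionally,'' but reinterpreting a stated hypothesis is not a proof step, and it is not needed. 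The paper gets this from condition~\eqref{i:nice-Morse-sigma}: since $\omega(\sigma)\notin\sigma$ and $\sigma\subseteq\gamma$, one has $\sigma\subseteq\gamma\smallsetminus\{\omega(\sigma)\}$, hence $\gamma\smallsetminus\{\omega(\sigma)\}\in Y$ --- here one is using that $Y$ contains every superset of an element of $N$, which is how the proposition is always applied (in \cref{t:nice-Morse-resolution}, $Y$ is the set of all nonfaces and $N$ the minimal nonfaces, and in \cref{l:max} the up-set is taken by definition). Your maximality argument then places $\gamma\smallsetminus\{\omega(\sigma)\}$ in the same block $Y_\sq(\sigma)$. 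With that substitution your proof is complete; condition~\eqref{i:nice-Morse-gamma} is used only in the forward direction, namely to show that a $\gamma\in Y_\sq(\sigma)$ not containing $\omega(\sigma)$ is still matched, as the target of the edge $\gamma\cup\{\omega(\sigma)\}\to\gamma$.
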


\begin{proof}
   Let $\sigma\in N$. First, observe that if $\gamma\in Y_\sq(\sigma)$ and $\omega(\sigma)\in \gamma$, then $\gamma \setminus \{\omega(\sigma)\} \in Y_\sq(\sigma)$ as well. Indeed, since $\omega(\sigma) \notin \sigma$, we have $\sigma\subseteq \gamma\setminus \{\omega(\sigma)\}$.  If $\tau \in N$ and $\tau \subseteq \gamma \setminus \{\omega(\sigma\}$, then $\tau \subseteq \gamma$ trivially. Thus $\sigma \sq \tau$, and hence $\gamma \setminus \{\omega(\sigma)\} \in Y_\sq(\sigma)$. 
   
   The fact that $A_\sigma$ is an acyclic matching is then a consequence of \cref{matching-lemma}\eqref{i:matchingv}. Observe  that all cells $\gamma\in Y_\sq(\sigma)$ appear in an edge of $A_\sigma$. Indeed, if $\omega(\sigma)\in \gamma$ then $\gamma\to \gamma\smallsetminus \{\omega(\sigma)\}$ is  in $A_\sigma$. If $\omega(\sigma)\notin \gamma$, then $\gamma\cup \omega(\sigma)\in Y_\sq(\sigma)$ by~\eqref{i:nice-Morse-gamma} and $\gamma\cup \omega(\sigma)\to \gamma$ is in $A_\sigma$. Thus, the set of $A_\sigma$-critical cells is empty.
   
  Then, \cref{clusterlemma} gives that  $A=\bigcup_{\sigma\in M}A_\sigma$ is also an acyclic matching on $G_Y$ and its set of critical cells is empty. Using \cref{matching-lemma}\eqref{i:matchingY}, we conclude that $A$ is also an acyclic matching on $G_{2^V}$, with set of critical cells equal to $2^V\smallsetminus Y$. 
\end{proof}

\begin{theorem}[{\bf A Morse resolution from the minimal nonfaces of a simplicial complex}]\label{t:nice-Morse-resolution}
Let $\TT$ be the Taylor complex of a monomial ideal $I$ minimally generated by monomials $\m_1,\ldots,\m_t$ , and let $\Delta$ be a subcomplex of $\TT$. Let $N$ denote the set of minimal (with respect to inclusion) nonfaces of $\Delta$ equipped with a total order $\sq$, and let $Y$ denote the set of all nonfaces of $\Delta$ partitioned by $Y=\bigcup_{\sigma \in N} Y_\sq(\sigma)$ as in \eqref{eq:nice}.

Assume that $$\omega\colon N\to \{\m_1,\ldots,\m_t\}$$ is a function such that for each $\sigma \in N$
\begin{enumerate}
\item\label{i:resolution-sigma} $\omega(\sigma)\notin \sigma$;
\item\label{i:resolution-lcm} $\omega(\sigma)\mid \lcm(\sigma)$;
\item\label{i:resolution-tau} if  $\tau\in Y_\sq(\sigma)$  then $\tau\cup \omega(\sigma)\in Y_\sq(\sigma)$.
\end{enumerate}
Then $\Delta$ supports a free resolution of $I$. 
\end{theorem}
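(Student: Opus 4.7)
The plan is to deduce Theorem~\ref{t:nice-Morse-resolution} from Proposition~\ref{p:nice-Morse} together with the Batzies--Welker machinery in Theorem~\ref{t:BW} and Remark~\ref{r:BW}. The whole argument is essentially a matter of checking that the three hypotheses of the theorem translate the hypotheses of Proposition~\ref{p:nice-Morse} and at the same time force homogeneity of the resulting matching.

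First I will identify the vertex set $V$ of $\TT$ with the generating set $\{\m_1,\dots,\m_t\}$, so that the given function $\omega\colon N\to\{\m_1,\dots,\m_t\}$ is literally a function $\omega\colon N\to V$. Since every nonface of $\Delta$ contains some minimal nonface, the set $Y$ of nonfaces of $\Delta$ admits the partition $\{Y_\sq(\sigma)\}_{\sigma\in N}$ of Construction~\ref{c:nice}. Hypotheses~\eqref{i:resolution-sigma} and~\eqref{i:resolution-tau} of the theorem are precisely hypotheses~\eqref{i:nice-Morse-sigma} and~\eqref{i:nice-Morse-gamma} of Proposition~\ref{p:nice-Morse}, so that proposition produces an acyclic matching $A=\bigcup_{\sigma\in N}A_\sigma$ on $G_{2^V}=G_\TT$ whose set of critical cells is $2^V\smallsetminus Y$, i.e.\ the set of faces of $\Delta$ together with the empty set.

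Next I will verify that $A$ is homogeneous, which is where hypothesis~\eqref{i:resolution-lcm} enters. For any edge $\gamma\to\gamma\smallsetminus\{\omega(\sigma)\}$ in $A_\sigma$, the fact that $\gamma\in Y_\sq(\sigma)$ implies $\sigma\subseteq\gamma$, and because $\omega(\sigma)\notin\sigma$ by~\eqref{i:resolution-sigma} we get $\sigma\subseteq\gamma\smallsetminus\{\omega(\sigma)\}$. Hypothesis~\eqref{i:resolution-lcm} then yields $\omega(\sigma)\mid \lcm(\sigma)\mid \lcm(\gamma\smallsetminus\{\omega(\sigma)\})$, so $\m_\gamma=\m_{\gamma\smallsetminus\{\omega(\sigma)\}}$, which is exactly the homogeneity condition.

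Finally I will apply Theorem~\ref{t:BW} to obtain a CW-complex $\mathcal{X}_A$ that supports a free resolution of $I$ and whose $i$-cells correspond bijectively to the $A$-critical $i$-cells of $\TT$. Since those critical cells are precisely the faces of $\Delta$, which form a simplicial complex, Remark~\ref{r:BW} lets us conclude that $\Delta$ itself supports a free resolution of $I$. I anticipate no real obstacle: the only nontrivial step is the homogeneity check, and condition~\eqref{i:resolution-lcm} has been placed in the statement precisely to make it immediate.
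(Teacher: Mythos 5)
Your proposal is correct and follows essentially the same route as the paper: apply \cref{p:nice-Morse} to get an acyclic matching whose critical cells are the faces of $\Delta$, use condition~\eqref{i:resolution-lcm} to see the matching is homogeneous, and conclude via \cref{t:BW} and \cref{r:BW}. The only difference is that you spell out the homogeneity check (that $\omega(\sigma)\mid\lcm(\sigma)\mid\m_{\gamma\smallsetminus\{\omega(\sigma)\}}$ forces $\m_\gamma=\m_{\gamma\smallsetminus\{\omega(\sigma)\}}$), which the paper leaves implicit.
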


\begin{proof} By \cref{p:nice-Morse} there is an acyclic matching $A$ on the faces  of $\Trq$ whose critical cells  are exactly the  faces of $\Delta$. The condition $\omega(\sigma)\mid \lcm(\sigma)$ 
ensures that this matching is homogeneous. It now follows from \cref{t:BW}, in view of \cref{r:BW}, that $\Delta$ supports a free resolution of $I$. \end{proof}

The next result offers a path towards proving condition~\eqref{i:resolution-tau} in \cref{t:nice-Morse-resolution}. In our discussions below, for a finite set $V$ and a total order $\sq$ of $2^V$, we will use the total order induced by $\sq$ on $V$ itself, defined by 
\begin{equation}\label{e:induced-order}
v \sq u
\iff
\{v\} \sq \{u\} \qfor v,u\in V.
\end{equation}
Also, by $v \succ w$ we mean $v \sq w$ and $v \neq w$, and for  $\sigma \in 2^V$, we denote  $\max_{\sq}(\sigma)$ by $\max(\sigma)$.

\begin{lemma}
\label{l:max}
Let $V$ be a finite set whose power set  $2^V$ is endowed with a total order $\sq$ such that   
\begin{equation}
\label{e:order}
\sigma \sq \tau \Longrightarrow \{ \max(\sigma)\} \sq \{\max(\tau)\} \qquad \text{for $\sigma, \tau \in 2^V$.  }
\end{equation}
Let $\emptyset\ne N \subseteq 2^V$ and  $$Y:=\{\tau \in 2^V \st \sigma \subseteq \tau \qforsome \sigma \in N\}.$$ 
Consider a function $\omega\colon N\to V$, and let
$\sigma\in N$ satisfying the following property:  
\begin{enumerate}
    \item[(\#)] if $\alpha\in N$ is such that $\omega(\sigma)\in \alpha$ and $\alpha\succ \sigma$, then $\max(\alpha)\ne \max(\sigma)$, and there exist $\sigma' \subseteq \sigma$ and $\sigma''\subseteq \alpha\smallsetminus\{\omega(\sigma)\}$ such that 
$$
\sigma'\cup \sigma''\in N \qand \max(\alpha)\in \sigma''\,.
$$
\end{enumerate}
If $\tau\in Y_\sq(\sigma)$, then $\tau\cup\{\omega(\sigma)\}\in  Y_\sq(\sigma)$. 
\end{lemma}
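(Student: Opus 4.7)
The plan is to verify $\tau^+ := \tau \cup \{\omega(\sigma)\} \in Y_\sq(\sigma)$ directly from the definition. Since $\sigma \subseteq \tau \subseteq \tau^+$, membership in $Y$ is automatic, so the real task reduces to showing that no $\alpha \in N$ with $\alpha \subseteq \tau^+$ strictly exceeds $\sigma$ in the order $\sq$. I would fix such an $\alpha$ and argue by contradiction, splitting on whether or not $\omega(\sigma) \in \alpha$.

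If $\omega(\sigma) \notin \alpha$, then $\alpha \subseteq \tau$, which immediately contradicts the hypothesis $\tau \in Y_\sq(\sigma)$ together with $\alpha \succ \sigma$. The substantive case is therefore $\omega(\sigma) \in \alpha$, which is exactly where (\#) applies. The heart of the proof is to take the decomposition $\sigma' \subseteq \sigma$, $\sigma'' \subseteq \alpha \setminus \{\omega(\sigma)\}$ guaranteed by (\#) and observe that $\beta := \sigma' \cup \sigma''$ sits inside $\sigma \cup (\alpha \setminus \{\omega(\sigma)\}) \subseteq \tau$ while still belonging to $N$. The role of (\#) is precisely to produce such a witness inside $\tau$ that inherits the top element $\max(\alpha)$ from $\alpha$.

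Since $\beta \in N$ and $\beta \subseteq \tau \in Y_\sq(\sigma)$, the definition of $Y_\sq(\sigma)$ gives $\sigma \sq \beta$, and \eqref{e:order} upgrades this to $\max(\sigma) \sq \max(\beta)$. Because $\max(\alpha) \in \sigma'' \subseteq \beta$, we also have $\max(\beta) \sq \max(\alpha)$, so $\max(\sigma) \sq \max(\alpha)$. On the other hand, $\alpha \succ \sigma$ combined with \eqref{e:order} yields $\max(\alpha) \sq \max(\sigma)$, and antisymmetry of the total order forces $\max(\alpha) = \max(\sigma)$, contradicting the first conclusion of (\#).

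The only real content lies in spotting that (\#) is engineered precisely so that its output $\beta$ can be tested against $\sigma$ inside $\tau$; after that, the compatibility property \eqref{e:order} is purely bookkeeping to transfer inequalities from cells to their maxima. The main thing to be careful about is keeping the direction of $\sq$ straight, but I do not foresee any genuine obstacle beyond that.
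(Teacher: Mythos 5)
Your proof is correct and follows essentially the same route as the paper's: in the nontrivial case you invoke (\#) to manufacture $\beta=\sigma'\cup\sigma''\in N$ inside $\tau$, then use $\tau\in Y_\sq(\sigma)$ together with \eqref{e:order} to force $\max(\alpha)=\max(\sigma)$, contradicting (\#). The only (harmless) difference is that you use $\max(\beta)\sq\max(\alpha)$ directly instead of computing $\max(\beta)=\max(\alpha)$ exactly as the paper does.
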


\begin{proof}
Let $\tau\in Y_\sq(\sigma)$. If $\omega(\sigma) \in \sigma$ then since $\sigma \subseteq \tau$ we have $\tau \cup \{\omega(\sigma)\} = \tau$, so there is nothing to prove. Assume $\omega(\sigma) \notin \sigma.$ Since $\sigma\subseteq \tau$, we also have $\sigma\subseteq \tau\cup\{\omega(\sigma)\}$, hence $\tau\cup\{\omega(\sigma)\}\in Y$. Then  $\tau\cup\{\omega(\sigma)\} \in Y_\sq(\alpha)$ for some $\alpha \in N$. Assume $\tau\cup\{\omega(\sigma)\}\notin  Y_\sq(\sigma)$, therefore 
$$\alpha \succ \sigma \qand \omega(\sigma)\in \alpha.$$

By property (\#) in the hypothesis, we have 
 \begin{equation*}\gamma:=\sigma'\cup\sigma''\in N \qforsome \sigma'\subseteq \sigma, \quad  \sigma''\subseteq\alpha\smallsetminus\{\omega(\sigma)\} \qwith \max(\alpha)\in \sigma''.
 \end{equation*}

Since $\alpha\subseteq \tau \cup \{\omega(\sigma)\}$, we have 
$\alpha\smallsetminus\{\omega(\sigma)\}\subseteq \tau$. Further, since $\sigma'\subseteq \sigma\subseteq \tau$ and $\sigma''\subseteq \alpha\smallsetminus\{\omega(\sigma)\}$, we conclude 
\begin{equation}
\label{e:D}
    \gamma\subseteq \tau.
\end{equation} 

But $\tau\in Y_\sq(\sigma)$ and $\gamma\in N$, and from \eqref{e:D} we conclude $\sigma \sq \gamma$, and hence 
\begin{equation}\label{e:E}\max(\sigma)\succcurlyeq \max(\gamma).\end{equation} 

 Since $\alpha\succ\sigma$ and $\sigma'\subseteq \sigma$, 
 we must have $\max(\alpha)\sq\max(\sigma)\sq \max(\sigma')$. Also $\max(\alpha)\in\sigma''$ and $\sigma''\subseteq \alpha$  implies $\max(\alpha)=\max(\sigma'')$.  In particular, since $\gamma=\sigma'\cup\sigma''$, we have
\begin{equation}
\label{e:F}
\max(\gamma)=\max(\alpha)\succcurlyeq\max(\sigma).
\end{equation}
 
Then \eqref{e:E} and \eqref{e:F} together imply that 
$$
\max(\alpha)=\max(\gamma)= \max(\sigma),
$$
contradicting (\#). This ends our argument.
\end{proof}

Next we define an order in the case of interest, with $V=\mathcal N^r_q$ for integers $r,q$, for which $2^V$ is the Taylor complex $\Trq$.  We first define an order on $V$, and we use this order to induce an order on $2^V$. 

If $\ba=(a_1,a_2,\ldots, a_q) \in \Nrq$, then we let $\lambda(\ba)$ denote the vector $\ba$ written in partition form, obtained by ordering the integers $a_i$ in non-increasing order. For example, if $\ba=(2,3,3,1)$, then $\lambda(\ba)=(3,3,2,1)$. 

\begin{definition}[\bf A total order on the cells of $\Trq$] \label{d:total-order} We define a total order  $\sq$ on the cells of $\Trq$ starting from a lexicographic order on the partition forms of their vertices.

\begin{enumerate}  
\item \label{i:V-order} For 
$\ba=(a_1,\cdots,a_q), \ \bb = (b_1,\cdots,b_q) \in \Nrq$,  
\begin{itemize}
    
\item The lexicographic order $\geq _\lex$ is defined as follows: $\ba \geq_\lex \bb$ means  $\ba=\bb$ or $\ba \neq \bb$ and $\ba >_\lex \bb$ where  
$$\ba >_\lex \bb 
 \iff
a_1=b_1, a_2=b_2, \ldots , a_k=b_k, \qand a_{k+1}> b_{k+1}
\qforsome  k \geq 1.
 $$
\item
The total order $\sq$ is defined as follows: 
$$\ba \sq  \bb \qif 
 \lambda(\ba) >_\lex \lambda(\bb), \qor
 \lambda(\ba)=\lambda(\bb)        \qand 
 \ba \geq_\lex \bb.$$
\end{itemize}

By $\ba \succ \bb$ we mean $\ba \sq \bb$ and $\ba \neq \bb$.
\item\label{i:E-order}   We order the minimal generators of $\Erq$ as follows:  given $\pmea$, $\pmeb$ two minimal generators of $\Erq$, we write 
$$\pmea \sq \pmeb  \qif 
    \ba \sq \bb.$$  

\item\label{i:T-order}   For $\sigma=\{\ba_1,\ldots,\ba_s\}$  and $\tau=\{\bb_1,\ldots,\bb_t\}$ in $\Trq$   such that 
$$\ba_1 \sq \ba_2 \sq \cdots \sq \ba_s
\qand 
\bb_1 \sq \bb_2 \sq \cdots \sq \bb_t,$$
we say $\sigma \sq  \tau$ if $\sigma=\tau$ or 
$$\ba_1=\bb_1, \ba_2=\bb_2, \ldots , \ba_k=\bb_k, 
\qand \ba_{k+1} \succ \bb_{k+1}
\qforsome  k \geq 1.
$$
\end{enumerate}
\end{definition}

In \cref{d:total-order} with $V=\Nrq$ as the vertex set of $\Trq$, we first defined  an order on $V$, and then we used this order to define an order on  $2^V=\Trq$. Observe that the order on $\Trq$ in~\eqref{i:T-order}  above induces an order on $\Nrq$ in the sense of \eqref{e:induced-order}, which  coincides with the one in \eqref{i:V-order}.

The order on the cells of $\Trq$ introduced in \cref{d:total-order} is the one we will use to show that ${\E_q}^3$ is Scarf in \cref{s:Eq3-Scarf}.

\section{${\E_q}^3$ is Scarf}\label{s:Eq3-Scarf}

We are now ready to show that the Scarf complex $\SS_q^3$ supports a resolution of ${\E_q}^3$ by eliminating all other faces of the Taylor complex $\TT_q^3$ using the Morse matching from \cref{t:nice-Morse-resolution}.

\begin{construction}[{\bf A function $\omega\colon N\to \N_q^3$}]
\label{list}
In \cref{t:minimalnonfaces3} we found all elements of the set $N$ of  minimal nonfaces of $\Sq$. For the convenience of the reader, we list the witness $\omega(\sigma)$ for each non-edge $\sigma$ of $\Sq$ in the table below (these choices come from~\cite[Lemma 6.1, Remark 6.5]{extremal}). 
For a subset $A\subseteq [q]$ and $i\in [q]$ let
$$t_i=|A \cap\{i\}|.$$ Then the table below helps the reader verify  that for each $\sigma=\{\ba,\bb\}$, $$\omega(\sigma)\cdot \be_A \leq \max\{\ba\cdot \be_A, \bb \cdot \be_A).$$
$$
\begin{array}{l|l|l|l|l}
\sigma=\{\ba,\bb\} 
& \ba\cdot \be_A &
\bb\cdot \be_A  & \omega(\sigma) & \omega(\sigma)\cdot \be_A \\
\hline
\{3\be_a, 3\be_b\} &
3t_a & 3t_b & 
2\be_a+\be_b &2t_a+t_b\\
\{3\be_a, \be_a + 2 \be_b\} &
3t_a & t_a + 2t_b& 
2\be_a+\be_b &2t_a+t_b\\
\{3\be_a, 2\be_b+\be_c\} &
3t_a & 2t_b+t_c&
2\be_a+\be_b &2t_a+t_b\\
\{3\be_a, \be_a+\be_b+\be_c\}&
3t_a & t_a+t_b+t_c&
2\be_a+\be_b &2t_a+t_b\\
\{3\be_a, \be_b+\be_c+\be_d\} &3t_a & t_b+t_c+t_d&
2\be_a+\be_b &2t_a+t_b\\
\{2\be_a+\be_b, 2\be_b+\be_c\}& 
2t_a+t_b & 2t_b+t_c&
\be_a+\be_b+\be_c&t_a+t_b+t_c\\    
\{2\be_a+\be_b, \be_b+2\be_c\} &
2t_a+t_b & t_b+2t_c&
\be_a+\be_b+\be_c&t_a+t_b+t_c\\  
\{2\be_a+\be_b, 2\be_c+\be_d\}&
2t_a+t_b & 2t_c+t_d&
\be_a+\be_b+\be_c&t_a+t_b+t_c\\  
\{2\be_a+\be_b,\be_b+\be_c+\be_d\}&
2t_a+t_b & t_b+t_c+t_d&
\be_a+\be_b+\be_c&t_a+t_b+t_c
\end{array}
$$

We list them below, using our $\epsilon$ notation rather than vectors as our next arguments will be easier in this language, together with a choice of $\omega(\sigma)\in {\E_q}^3$  such that $\omega(\sigma)\notin \sigma$ and  $\omega(\sigma)\mid \lcm(\sigma)$ for all $\sigma\in N$.  The choice of $b$  recorded below comes from the proof of \cref{t:minimalnonfaces3} for the triangles. Note that when $q< 5$ some of these faces may not exist. (For example, one needs $q\ge 5$ in order to have faces of Type~III.)

$$\begin{array}{c|c|c}
\mbox{Type} & \sigma \in N & \omega(\sigma)\\
\hline
\begin{array}{l}
\mbox{\small I} \\ 
\mbox{\small II} \\
\mbox{\small III} \\
\end{array}
&
\begin{array}{ll}
 \{\e_i^3, \bn\}, & 
 \bn \not \in \{ \e_i^2\e_j, \st j \in [q]\}, \e_i^3\succ \bn \\ 
 \{\e_i^2\e_j,\e_i\e_j^2, \bn\},  & i<j, \quad \bn \in \U_{[q]\smallsetminus\{i,j\}}^3  \\
 \{\e_i^2\e_j, \bn\},& i\ne j, \quad \bn\in \mathcal Q_{i,j} \\
\end{array}
&
\begin{array}{ll} 
\e_i^2\e_j, &
j=\min\{\ell \colon \ell\in \supp(\bn) \smallsetminus \{i\}\}\\
\e_i\e_j\e_k, &
k=\min\{\ell\colon \ell\in \supp(\bn)\}\\
\e_i\e_j\e_k, &
k=\min \{\ell\colon \ell\in \supp(\bn)\smallsetminus \{i,j\}\}
\end{array}
 \end{array}
$$ 
where 
 \[
 \mathcal Q_{i,j}=\left\{\e_a^2\e_b\colon a\ne b,a\ne i, \e_i^2\e_j\succ \e_a^2\e_b\ne \e_i\e_j^2 \right\}\cup \mathcal \e_j\mathcal U_{[q]\smallsetminus\{i,j\}}^2\,.
 \]
\end{construction}

Below we use the total order $\sq$ defined in \cref{d:total-order} and use the notation in \eqref{eq:nice} with this order. 

 \begin{lemma}
 \label{l:r=3}
In the setup of \cref{list}, let $\sigma \in N$ and 
$\tau\in Y_\sq(\sigma)$. Then  $\tau\cup\{\omega(\sigma)\}\in Y_\sq(\sigma)$. 
 \end{lemma}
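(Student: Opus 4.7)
The plan is to invoke Lemma \ref{l:max} with $V=\N_q^3$, with $N$ the set of minimal nonfaces of $\Sq$ from \cref{t:minimalnonfaces3}, equipped with the total order $\sq$ of \cref{d:total-order} and the function $\omega$ of \cref{list}. The compatibility assumption \eqref{e:order} is immediate from \cref{d:total-order}: by construction, the cell order compares vertices lexicographically after listing them in decreasing $\sq$-order, so $\sigma\sq\tau$ forces $\max(\sigma)\succcurlyeq\max(\tau)$. Granting this, the lemma reduces the goal to verifying property (\#) for every $\sigma\in N$.

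Property (\#) will be proved by a case analysis on the type (I, II, or III) of both $\sigma$ and a hypothetical $\alpha\in N$ with $\omega(\sigma)\in\alpha$ and $\alpha\succ\sigma$. The shape of $\omega(\sigma)$ restricts the possibilities: in Type I one has $\omega(\sigma)=\e_i^2\e_j$ of partition $(2,1)$, while in Types II and III one has $\omega(\sigma)=\e_i\e_j\e_k$ square-free, of partition $(1,1,1)$. A partition-comparison argument eliminates several subcases outright; for instance, if $\sigma$ is of Type I with $\max(\sigma)=\e_i^3$ of partition $(3)$, and $\alpha$ is of Type II or III, then $\max(\alpha)$ has partition $(2,1)$, so $\max(\sigma)\succ\max(\alpha)$ and hence $\sigma\succ\alpha$, contradicting $\alpha\succ\sigma$. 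Thus only the (Type I, Type I) pair survives when $\sigma$ is of Type I.

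As a representative surviving case, consider $\sigma=\{\e_i^3,\bn\}$ of Type I with $\omega(\sigma)=\e_i^2\e_j$, and $\alpha=\{\e_k^3,\e_i^2\e_j\}\in N$ of Type I. The condition $\alpha\succ\sigma$ forces $\e_k^3\succ\e_i^3$, hence $k<i$, so $\max(\alpha)=\e_k^3\neq\e_i^3=\max(\sigma)$. Taking $\sigma'=\{\e_i^3\}\subseteq\sigma$ and $\sigma''=\{\e_k^3\}=\alpha\smallsetminus\{\omega(\sigma)\}$ yields $\sigma'\cup\sigma''=\{\e_i^3,\e_k^3\}$, a Type I minimal nonface containing $\max(\alpha)$, as required. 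For $\sigma$ of Type II or III the analysis proceeds in the same spirit: one enumerates the possible $\alpha$ containing the square-free $\omega(\sigma)=\e_i\e_j\e_k$ (three type options, each with a few sub-cases depending on which vertex of $\alpha$ equals $\omega(\sigma)$), and in each case one exhibits an explicit Type I edge or Type II/III triangle built from one vertex of $\sigma$ and suitable vertices of $\alpha\smallsetminus\{\omega(\sigma)\}$ that serves as $\sigma'\cup\sigma''$.

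The main obstacle will be the combinatorial bookkeeping in the Type II/III cases: although no single subcase is deep, verifying $\max(\alpha)\neq\max(\sigma)$ requires careful use of the strict inequality $\e_i^2\e_j\succ\e_a^2\e_b$ built into $\mathcal Q_{i,j}$ together with the condition $i<j$ imposed in Type II, while verifying $\sigma'\cup\sigma''\in N$ crucially uses the minimum-index conventions in the definition of $\omega$ in \cref{list} to guarantee that the produced edge or triangle really is a minimal nonface rather than an edge of $\Sq$ allowed by \cref{l:types}.
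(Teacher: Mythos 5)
Your proposal follows essentially the same route as the paper: reduce to \cref{l:max} by verifying property (\#) via the stronger condition that $\max(\alpha)\ne\max(\sigma)$ and that $\{\max(\alpha),\m\}\in N$ for some $\m\in\sigma$ (so $\sigma'=\{\m\}$, $\sigma''=\{\max(\alpha)\}$), and then a case analysis on the types of $\sigma$ and $\alpha$, with the partition-rank comparison forcing $\alpha$ to be Type~I whenever $\sigma$ is. The Type~I case you work out matches the paper's (note only that $\alpha\succ\sigma$ by itself gives $\e_k^3\sq\e_i^3$; the strictness $k\ne i$ comes from $\{\e_i^3,\e_i^2\e_j\}\notin N$), and the Type~II/III bookkeeping you defer is exactly what the paper carries out, in each case producing a two-element minimal nonface $\{\max(\alpha),\e_i^2\e_j\}$ or $\{\e_l^3,\bn\}$ from the lists in \cref{t:minimalnonfaces3}.
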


 \begin{proof} To prove the statement, we will prove~(\#)  in \cref{l:max}. Let $\alpha\in N$ with $\omega(\sigma)\in \alpha$ and $\alpha\succ\sigma$. To verify (\#) of \cref{l:max},  it suffices to show \begin{equation}
 \label{e:(2)}
 \max(\sigma)\ne\max(\alpha)\ne \omega(\sigma) \qquad\text{and}\qquad \{\max(\alpha), \m\}\in N \quad\text{for some  $\m\in \sigma$.}
 \end{equation}
 Indeed, if \eqref{e:(2)} holds, then (\#) holds with $\sigma'=\{\m\}$ and $\sigma''=\{\max(\alpha)\}$.

Since $\sigma, \alpha \in N$, they are both  of the types introduced in \cref{list}.

Assume $\sigma=\{\e_i^3,\bn\}$ is of Type~I.  Then $\omega(\sigma)=\e_i^2\e_j$ and since $\alpha\succ \sigma$, $\alpha$ is of  Type~I as well. Therefore $\alpha=\{e_k^3, \e_i^2\e_j\}$ for some $k\in[q]$. If $k=i$, then $\alpha=\{\e_i^3,\e_i^2\e_j\}\notin N$, a contradiction. Hence $k\ne i$, implying $\max(\alpha)=\e_k^3\ne\e_i^3=\max(\sigma)$ and also $\max(\alpha)\ne \omega(\sigma)$, and we  know  $\{\e_k^3,\e_i^3\}$ is a Type~I face of  $N$, 
and thus \eqref{e:(2)} holds with $\m=\e_i^3$. 

Assume $\sigma=\{\e_i^2\e_j,\e_i\e_j^2, \bn\}$ or $\sigma=\{\e_i^2\e_j,\bn\}$ is of Type~II or III, hence $\omega(\sigma)=\e_i\e_j\e_k$ is square-free. Assume  $\alpha$ is of Type~I, then $\max(\alpha)=\e_l^3$ for some $l\in [q]$, implying $\max(\alpha)\ne \max(\sigma)$ and $\max(\alpha)\ne \omega(\sigma)$. If $l\ne i$, then $\{\e_l^3,\e_i^2\e_j\}$ is a Type~I face of $N$,  which proves  \eqref{e:(2)} with $\m=\e_i^2\e_j$. If $l=i$, then $\{\e_i^3,\bn\}$ is a Type~I face of $N$, thus  \eqref{e:(2)} holds with $\m=\bn$.   

Assume $\alpha$ is of Type~II. Since $\omega(\sigma)=\e_i\e_j\e_k$ and $\omega(\sigma)\in \alpha$, we must have 
$$
\alpha=\{\e_a^2\e_b,\e_a\e_b^2, \e_i\e_j\e_k\}\qwhere \{a,b\}\cap \{i,j,k\}=\emptyset \qand a<b.
$$
Then $\max(\alpha)=\e_a^2\e_b\ne \e_i^2\e_j= \max(\sigma)$ and $\max(\alpha)\ne \omega(\sigma)$. 
Observe that 
$\{\e_a^2 \e_b, \e_i^2 \e_j\}$ is a Type~III face of $N$, hence  \eqref{e:(2)} holds with $\m=\e_i^2\e_j$. 

Assume $\alpha$ is of Type~III. Since $\omega(\sigma)=\e_i\e_j\e_k$ and $\omega(\sigma)\in \alpha$, we must have 

$$\alpha=\{\e_a^2\e_b, \e_i\e_j\e_k\} \qwhere
\{a,b\}\cap \{i,j,k\}=\{b\} \qand b\ne a.
$$
In particular, $a\ne i$, or else $a\in \{a,b\}\cap \{i,j,k\}$, implying $a=b$, a contradiction. We have then  $\max(\alpha)=\e_a^2\e_b\ne \e_i^2\e_j=\max(\sigma)$ and $\max(\alpha)\ne \omega(\sigma)$. Also, 
$\{\e_a^2 \e_b, \e_i^2 \e_j\}$ is a face of Type~III, hence \eqref{e:(2)} holds with $\m=\e_i^2\e_j$. 
\end{proof}

 We now have the tools in place to show the main result of the paper. When combined with results from prior sections, the theorem below provides a complete combinatorial description of a simplicial complex that supports a minimal free resolution of ${\E_q}^3$.

\begin{theorem}[{\bf ${\E_q}^3$ is Scarf}]
\label{t:r=3}
For any $q\ge 1$, $\mathbb S_q^3$ supports a minimal free resolution of ${\E_q}^3$. 
\end{theorem}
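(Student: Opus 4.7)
The plan is to apply \cref{t:nice-Morse-resolution} to the Taylor complex $\TT_q^3$ of $\E_q^3$, with $\Delta = \SS_q^3$, $N$ the complete list of minimal nonfaces determined in \cref{t:minimalnonfaces3}, equipped with the total order $\sq$ of \cref{d:total-order}, and $\omega\colon N \to \N_q^3$ the function defined in \cref{list}. The conclusion of \cref{t:nice-Morse-resolution} will immediately give that $\SS_q^3$ supports a free resolution of $\E_q^3$, and since $\SS_q^3$ is the Scarf complex, this resolution is automatically minimal (every resolution contains the Scarf complex, so the one supported by $\SS_q^3$ must coincide with the minimal one).

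It remains to verify the three hypotheses \eqref{i:resolution-sigma}, \eqref{i:resolution-lcm}, \eqref{i:resolution-tau} of \cref{t:nice-Morse-resolution} for this choice of data. Condition~\eqref{i:resolution-sigma}, that $\omega(\sigma) \notin \sigma$, is immediate by inspection of the three cases (Types I, II, III) in \cref{list}: in each case the index chosen for the new coordinate of $\omega(\sigma)$ is explicitly outside the support of $\sigma$. Condition~\eqref{i:resolution-lcm}, that $\omega(\sigma) \mid \lcm(\sigma)$, is verified by the table in \cref{list}: checking that $\omega(\sigma) \cdot \be_A \leq \max\{\ba \cdot \be_A : \ba \in \sigma\}$ for every $A \subseteq [q]$ is exactly the translation to vector language of $\omega(\sigma)\mid\lcm(\sigma)$ (the table handles the edges of Types I and II, and a brief check identical in spirit handles the Type III triangles). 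Condition~\eqref{i:resolution-tau} is precisely the content of \cref{l:r=3}.

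The nontrivial step is really \eqref{i:resolution-tau}, which was reduced by \cref{l:max} to the combinatorial property (\#), and then verified case-by-case in \cref{l:r=3}. So the proof of the theorem itself is essentially a bookkeeping argument: assemble the pieces and quote \cref{t:nice-Morse-resolution}, noting afterward that minimality follows because no cell complex supporting a resolution of a monomial ideal can be smaller than its Scarf complex~\cite[Theorem~59.2]{P}.
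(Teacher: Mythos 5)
Your proposal is correct and follows essentially the same route as the paper: the paper's proof of \cref{t:r=3} likewise invokes \cref{t:nice-Morse-resolution} with $\Delta=\Sq$, the order of \cref{d:total-order}, and the function $\omega$ of \cref{list}, noting that conditions \eqref{i:resolution-sigma} and \eqref{i:resolution-lcm} are clear from \cref{list} and that condition \eqref{i:resolution-tau} is exactly \cref{l:r=3}. Your closing remark that minimality is automatic because every supporting complex must contain the Scarf complex is the same justification the paper gives (in the introduction) for why the resulting resolution is minimal.
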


\begin{proof} Let  $N$ be the set of minimal non-faces of the $\Sq$ and let $\omega\colon N \to \N^3_q$ be defined  as in \cref{list}.
    We use \cref{t:nice-Morse-resolution} with $\Delta=\Sq$ and the order defined in \cref{d:total-order}. Conditions~\eqref{i:resolution-sigma} and~\eqref{i:resolution-lcm} of \cref{t:nice-Morse-resolution} are clear from \cref{list}, and Condition~\eqref{i:resolution-tau} is proved in \cref{l:r=3}. 
\end{proof}

\section{An upper bound for projective dimension and betti numbers of the third power of square-free monomial ideals}\label{s:bounds}\label{s:counting}

Now we are ready to go back to where we started: bounding betti vectors of powers of square-free monomial ideals by $f$-vectors of simplicial complexes. Recall that for a monomial ideal $I$ of projective dimension $p$ and a simplicial complex $\Delta$ of dimension $d$, the betti-vector and $f$-vector can be defined, respectively, as
$${\bbeta}(I)=(\beta_0(I),\beta_1(I),\ldots, \beta_p(I)) 
\qand
{\mathbf f}(\Delta)=(f_0(\Delta),f_1(\Delta),\ldots, f_{d}(\Delta)),
$$
where $f_i(\Delta)$ denotes the number of $i$-faces of $\Delta$.
The three simplicial complexes
we consider are  
\begin{equation}\label{e:complexes}
\Srq \subseteq \Lrq \subseteq \Trq.
\end{equation}

The complex $\SS^r_q=\Scarf({\E_q}^r)$ supports a minimal free resolution of ${\E_q}^r$ when $r \leq 2$ by~\cite{L2} and when $r=3$ by \cref{t:r=3} above. The complex $\Lrq$ was defined in~\cite{Lr}, and supports a free resolution of $I^r$ where $I$ is minimally generated by any $q$ square-free monomials. The complex $\Trq$ is the Taylor simplex of ${\E_q}^r$ 
with $\binom{q+r-1}{r}$ vertices.
As a result, we can write 
\begin{equation}\label{e:inequalities}
{\bbeta}(I^r) \leq 
{\bbeta}({\E_q}^r) =
{\mathbf f}(\Srq) \leq
{\mathbf f}(\Lrq) \leq
{\mathbf f}(\Trq)
\end{equation}
where $I$ is any square-free monomial ideal and $r \leq 3$. The first inequality holds for all $r$ by \cref{t:e-bound}~(\cite{Lr}). 
The equality is conjectured (\cref{c:conjecture}) to be true for all $r$. 
The  last two inequalities hold for all $r$ and are strict when $r > 1$ and $q > 2$. When $r=1$, the three complexes in~\eqref{e:complexes} are equal.
When $r\le 2$, $\SS_q^2 = \LL_q^2$ and the corresponding $f$-vector is already known:

\begin{theorem}[{See {\cite[Theorem 4.1]{L2}, \cite[Theorem 7.9]{Lr}}}]\label{t:bound}
 Let $I$ be an ideal of a polynomial ring  minimally generated by
  $q\geq 1$ square-free monomials. For any $i\ge 0$ we have
  $$
\beta_i(I) \leq \beta_i({\E_q}) = {\textstyle \binom{q}{i+1}}
\qand 
 \pd (I) \leq \pd(\E_q)= q-1
    $$    and 
$$
\beta_i(I^2) \leq \beta_i({\E_q}^2) = {\textstyle \binom{\binom{q}{2}}{i+1} + q\binom{q-1}{i} }
\qand 
 \pd (I^2) \leq \pd({\E_q}^2)= 
  \begin{cases}\textstyle{\binom{q}{2}}-1 & \qif q\geq 3\\
      q-1 & \qif  1\le q\le 2. 
      \end{cases}
    $$    
  \end{theorem}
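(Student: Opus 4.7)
The plan is to reduce each bound to an explicit computation. By \cref{t:e-bound} we already have $\beta_i(I)\le\beta_i(\E_q)$ and $\beta_i(I^2)\le\beta_i({\E_q}^2)$, so everything reduces to computing $\beta_i(\E_q)$, $\beta_i({\E_q}^2)$, $\pd(\E_q)$, and $\pd({\E_q}^2)$ on the nose, i.e., to exhibiting an explicit minimal free resolution of each of the two ideals.

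For $r=1$, I would first show that $\TT(\E_q)$ equals the Scarf complex $\Scarf(\E_q)$; equivalently, that the labels on its $2^q-1$ nonempty faces are all distinct. This is immediate from \cref{d:Eq}: since $x_A$ divides $\e_i$ to the first power exactly when $i\in A$, for any $\emptyset\neq\sigma\subseteq[q]$ one has
$$\lcm(\e_i\st i\in\sigma)\;=\;\prod_{\{A\subseteq [q]\,\st\, A\cap \sigma\neq\emptyset\}} x_A.$$
From this label one reads off $\{A\st A\cap\sigma\neq\emptyset\}$, hence its complement $\{A\st A\subseteq\overline{\sigma}\}$, whose unique maximal element is $\overline{\sigma}$; so $\sigma$ is determined by its label. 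Consequently the Taylor resolution of $\E_q$ is already minimal, yielding $\beta_i(\E_q)=\binom{q}{i+1}$ and $\pd(\E_q)=q-1$.

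For $r=2$, \cref{t:Urq-Srq} tells us that $\SS_q^2=\UU_q^2$ and that it supports a minimal free resolution of ${\E_q}^2$, so the computation becomes a pure face-count on $\UU_q^2$. By \cref{d:Urq}, the facets of $\UU_q^2$ are the hypersimplex $\U_q^2=\{\be_i+\be_j\st 1\le i<j\le q\}$ (a full $(\binom{q}{2}-1)$-simplex on the square-free vertices) together with the $q$ simplices $\U_{\be_k}^2=\be_k+\U_q^1$ (each a $(q-1)$-simplex whose unique non-square-free vertex is $2\be_k$). I would then partition the $i$-faces of $\UU_q^2$ by whether they contain a non-square-free vertex. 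The key small claim is that no face can contain two distinct non-square-free vertices $2\be_k$ and $2\be_l$: indeed \cref{t:shift} shows that $\bw=\be_k+\be_l$ witnesses $\{2\be_k,2\be_l\}\notin\SS_q^2$. Hence the $i$-faces split as $\binom{\binom{q}{2}}{i+1}$ faces whose vertices are all square-free, plus, for each $k\in[q]$, the $\binom{q-1}{i}$ faces of $\U_{\be_k}^2$ that contain $2\be_k$, giving $\beta_i({\E_q}^2)=\binom{\binom{q}{2}}{i+1}+q\binom{q-1}{i}$.

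The projective dimension then drops out as the maximum dimension of a facet of $\UU_q^2$: comparing $\dim\U_q^2=\binom{q}{2}-1$ to $\dim\U_{\be_k}^2=q-1$, for $q\ge 3$ one has $\binom{q}{2}-1\ge q-1$, so $\pd({\E_q}^2)=\binom{q}{2}-1$, while for $1\le q\le 2$ the facet $\U_q^2$ is either empty or a single point and the facets $\U_{\be_k}^2$ dominate, giving $\pd({\E_q}^2)=q-1$. The main obstacle in this plan is minor: verifying the no-two-square-squares claim and handling the small-$q$ boundary cases cleanly, both of which follow routinely from the half-space picture of \cref{t:shift}.
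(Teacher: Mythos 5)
The paper does not actually prove this theorem: it is quoted verbatim from \cite{L2} and \cite{Lr}, so there is no internal argument to compare against. Your $r=1$ argument is correct and self-contained: the computation $\lcm(\e_i \st i\in\sigma)=\prod_{A\cap\sigma\neq\emptyset}x_A$ does show all Taylor labels are distinct, hence $\TT(\E_q)=\Scarf(\E_q)$ and the Taylor resolution is minimal, giving $\beta_i(\E_q)=\binom{q}{i+1}$ and $\pd(\E_q)=q-1$.

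The $r=2$ case, however, has a genuine gap. You invoke \cref{t:Urq-Srq} for the claim that ``$\SS_q^2=\UU_q^2$ and that it supports a minimal free resolution of ${\E_q}^2$,'' but that theorem asserts only $\facets(\UUrq)\subseteq\facets(\Srq)$ in general, with equality $\Srq=\UUrq$ restricted to $q\le 4$. For $q\ge 5$ it gives you neither that $\SS_q^2$ has no facets beyond those of $\UU_q^2$, nor — and this is the harder point — that $\SS_q^2$ supports a resolution of ${\E_q}^2$ at all. Those two facts are precisely the substantive content of \cite[Theorem 4.1]{L2} (the $r=2$ analogue of this paper's main result \cref{t:r=3} for $r=3$, which requires the whole discrete Morse apparatus of \cref{s:morse}); without them your face count of $\UU_q^2$ is a count of the wrong complex, or of a complex not known to compute betti numbers. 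The counting itself is fine once those inputs are granted: the witness $\be_k+\be_l$ correctly rules out faces with two non-square-free vertices, the partition into all-square-free faces and faces containing a unique $2\be_k$ yields $\binom{\binom{q}{2}}{i+1}+q\binom{q-1}{i}$, and the facet-dimension comparison gives the stated projective dimension. To repair the proof you should either cite \cite{L2} directly for the two missing facts (which is what the paper does), or supply an $r=2$ analogue of \cref{l:types}, \cref{thm:complete-list-of-faces}, and the matching argument of \cref{s:Eq3-Scarf} — a substantial undertaking that your proposal does not sketch.
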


An algebraic consequence of~\cref{t:r=3,t:e-bound} is that the number of faces of each dimension of $\Sq$ gives a sharp upper bound to the betti number of the third power of square-free monomial ideals.

\begin{theorem}[{\bf A sharp bound for the betti numbers of the third power of a square-free monomial ideal}]\label{t:sharpbound3}
    Let $I$ be a square-free monomial ideal generated by $q \geq 1$ generators. Then
    for any $i\geq 1$, we have
            \begin{multline}\label{eq:sharpbound}
             \beta_{i}(I^3) \leq
           \beta_{i}({\E_q}^3)=
\binom{\binom{q}{3}}{i + 1}  + q\binom{q-1}{i} + \binom{q}{2}\binom{q-2}{i-1} + \sum_{s=2}^{q-3} q\binom{q-1}{s-1} \binom{\binom{q-s}{3} + \binom{q-1}{2}}{i-s+2}\\
    + q\binom{q-1}{2}\binom{\binom{q-1}{2}}{i-q+4}
         + q(q-1)\binom{\binom{q-1}{2}}{i-q+3}
        + q\binom{\binom{q-1}{2}}{i-q+2}       
        \end{multline}
   and    
     $$
      \pd (I^3) \leq \pd ({\E_q}^3) = 
      \begin{cases}\textstyle{\binom{q}{3}}-1 & \qif q\geq 5\\
      \textstyle{\binom{q}{2}}-1 & \qif 3\le q\le 4\\
      q-1 &\qif 1\le q\le 2
      \end{cases}
    $$   
    Furthermore, if $q>5$ then $\beta_t(I^3)\le 1$ for $t=\pd({\E_q}^3)$. 
    
    All the inequalities turn to equalities when $I=\E_q$.
\end{theorem}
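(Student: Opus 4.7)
The plan is to combine \cref{t:e-bound} and \cref{t:r=3}. \cref{t:e-bound} yields $\beta_i(I^3) \le \beta_i(\E_q^3)$ with equality for $I = \E_q$, while \cref{t:r=3} gives $\beta_i(\E_q^3) = f_i(\SS_q^3)$ and $\pd(\E_q^3) = \dim\SS_q^3$. Both claims then reduce to extracting the $f$-vector and dimension of $\SS_q^3$ from the classification of facets in \cref{thm:complete-list-of-faces} and of minimal nonfaces in \cref{t:minimalnonfaces3}.

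For $q \ge 5$, I would partition the $i$-faces $\sigma$ of $\SS_q^3$ into four disjoint classes according to their non-square-free vertices:
(A) $\sigma$ has no non-square-free vertex;
(B) $\sigma$ contains some $3\be_a$;
(C) $\sigma$ contains a swap pair $\{2\be_a+\be_b,\,2\be_b+\be_a\}$;
(D) $\sigma$ has a non-square-free vertex but is in neither (B) nor (C).
The edge list of \cref{l:types} shows that two non-square-free vertices can only coexist in an edge by sharing a common doubled index or by being a swap pair, so in Class~(D) every non-square-free vertex takes the form $2\be_a+\be_j$ for a single index $a$. Writing $P = \{a\} \cup \{j : 2\be_a + \be_j \in \sigma\}$ therefore determines $(a, P)$ uniquely with $|P| \ge 2$, and $\sigma \subseteq W_{P,a}$, making the partition well-defined.

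The counts in each class follow from \cref{thm:complete-list-of-faces}: Class~(A) gives $\binom{\binom{q}{3}}{i+1}$ (all subsets of the square-free vertices of $\U_q^3$); Class~(B) gives $q\binom{q-1}{i}$, using that any face through $3\be_a$ lies in $\U_{2\be_a}^3$ and that $\{3\be_a,3\be_b\}$ is a nonface; Class~(C) yields $\binom{q}{2}\binom{q-2}{i-1}$, where the triangle minimal nonface of \cref{t:minimalnonfaces3}\eqref{i:nonface-triangles} is essential, as it forbids adjoining $\be_c+\be_d+\be_e$ disjoint from $\{a,b\}$ and leaves only the $q-2$ vertices of the form $\be_a+\be_b+\be_j$; and Class~(D), for fixed $(a, P)$ with $|P|=s$, contributes $\binom{\binom{q-s}{3}+\binom{q-1}{2}}{i-s+2}$ since the allowable square-free vertices are exactly those of $W_{P,a}$ and any subset of $W_{P,a}$ is a face. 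Multiplying by the $q\binom{q-1}{s-1}$ choices of $(a, P)$ and summing $s$ from $2$ to $q$, then separating $2\le s\le q-3$ from the boundary values $s\in\{q-2, q-1, q\}$ (where $\binom{q-s}{3}=0$), produces Terms 4--7 of~\eqref{eq:sharpbound}.

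For the projective dimension, Pascal's identity gives $|W_{P,a}| = \binom{q}{3} - \binom{q-2}{2} + 1$ at $|P|=2$ (and smaller for larger $|P|$), which is strictly below $\binom{q}{3}$ for all $q\ge 5$; comparison with $|\U_{\be_a}^3|=\binom{q}{2}$ shows $\binom{q}{3}>\binom{q}{2}$ precisely when $q>5$. Hence $\U_q^3$ achieves the maximum facet size $\binom{q}{3}$ for $q\ge 5$, uniquely when $q>5$, so $\pd(\E_q^3)=\binom{q}{3}-1$ and $\beta_{\binom{q}{3}-1}(\E_q^3)=1$ in that range. The small cases $3\le q\le 4$ (where $\U_{\be_a}^3$ dominates) and $q\le 2$ (where $\SS_q^3$ is computed by hand) follow directly from the facet list. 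The main subtle point throughout will be Class~(C): overlooking the triangle minimal nonface would inflate Term~3 by admitting square-free vertices disjoint from $\{a,b\}$, so the argument depends crucially on the failure of flagness of $\SS_q^3$ for $q\ge 5$.
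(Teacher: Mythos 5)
Your proposal is correct, and while the top-level strategy is the same as the paper's (reduce via \cref{t:e-bound} and \cref{t:r=3} to computing the $f$-vector and dimension of $\SS_q^3$ from \cref{thm:complete-list-of-faces}, \cref{t:minimalnonfaces3}, and \cref{l:types}), your counting decomposition is genuinely different and arguably cleaner. The paper enumerates $i$-faces facet-type by facet-type, at each stage subtracting the faces already counted in earlier types; this produces an intermediate expression containing $\sum_{s=2}^{q-3} q\binom{q-1}{s-1}\bigl(\binom{\binom{q-s}{3}+\binom{q-1}{2}}{i-s+2}-\binom{\binom{q-1}{2}}{i-s+2}\bigr)$ together with a correction term $q\bigl(\binom{\binom{q}{2}}{i+1}-\binom{\binom{q-1}{2}}{i+1}\bigr)$ for the facets $\U_{\be_a}^3$, and the final form \eqref{eq:sharpbound} only emerges after an application of the Chu--Vandermonde identity. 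Your partition by the set of non-square-free vertices of a face makes the four classes disjoint from the outset (the edge list forces all non-square-free vertices other than a swap pair or a $3\be_a$ to share a doubled index, so $(a,P)$ is determined), absorbs the paper's $\U_{\be_a}^3$ term into the $\sigma_2=\emptyset$ faces of class~(D), and hence yields $\sum_{s=2}^{q} q\binom{q-1}{s-1}\binom{\binom{q-s}{3}+\binom{q-1}{2}}{i-s+2}$ directly; splitting off $s\in\{q-2,q-1,q\}$ gives exactly the last three terms of \eqref{eq:sharpbound} with no binomial identity needed. You also correctly isolate the one place where a non-edge minimal nonface (the triangle of \cref{t:minimalnonfaces3}\eqref{i:nonface-triangles}) is essential, namely class~(C), where pairwise compatibility alone would wrongly admit square-free vertices disjoint from $\{a,b\}$; in classes (B) and (D) the upper bound comes from edges alone and achievability from the fact that $\U_{2\be_a}^3$ and $W_{P,a}$ are faces (\cref{p:weird} covering all $|P|\geq 2$, including $|P|>q-3$). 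The projective-dimension comparison of facet sizes matches the paper's.
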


\begin{proof}
    By~\cref{t:bound}, it suffices to prove the equalities when $I={\E_q}^3$. In view of of the fact that ${\E_q}^3$ is a  Scarf ideal with resolution supported on $\Sq$ (\cref{t:r=3}), the projective dimension $\pd({\E_q}^3)$ is equal  to the dimension  of $\Sq$, and the betti number $\beta_i({\E_q}^3)$ is equal to the number of $i$-faces of $\Sq$. Using the list of facets of $\Sq$ in \cref{thm:complete-list-of-faces}, we compute the sizes of the facets of $\Sq$ in the following table, for distinct $a, b \in [q]$  and $P \subseteq [q]$ with $2\leq |P|\leq q-3$.
$$  
\begin{array}{l|c|ll|l}
   \mbox{Facet} & q \geq & \mbox{Elements} && \mbox{Size} \\ 
   \hline
        \U_q^3 & 4 &\be_i+\be_j+\be_k & 1\le i<j<k\le q  & \binom{q}{3}\\
\U_{\be_a}^3&3&\be_a+\be_i+\be_j & 1 \leq i<j \leq q &  \binom{q}{2}  \\
\U_{2\be_a}^3 &1&2\be_a+\be_i & 1 \leq i\leq  q&q\\
\U_{\be_a+\be_b}^3&3&\be_a+\be_b+\be_i & i\in [q]  & q \\
W_{P,a} &5&  \mbox{ See \cref{p:weird}}  &&
         |P|-1  +\binom{q-1}{2}+ \binom{q-|P|}{3}             
    \end{array}
$$
 Observe that
       $$\begin{array}{ll}
        \binom{q}{3} &= \binom{q-1}{2} + \binom{q-1}{3} 
        = \binom{q-1}{2} +\binom{q-2}{3}  +\binom{q-2}{2} 
        =  \binom{q-1}{2} +\binom{q-2}{3}  + \frac{(q-2)(q-3)}{2}\\ 
        &\\
        &\geq \binom{q-1}{2} +\binom{q-|P|}{3}  + \frac{2|P|}{2}
        > \binom{q-1}{2} +\binom{q-|P|}{3}  + |P|-1.\\
    \end{array}$$
    When $q\geq 5$, we always have 
    \[{\textstyle
    \binom{q}{3}\geq \max ( \binom{q}{2}, q )
       }\]
    thus $\dim(\Sq)=\binom{q}{3}-1=\dim(\UU^3_q)$.
Moreover when $q\leq 4$ we have $\Sq=\UU^3_q$.
Hence 
$$
\pd({\E_q}^3)
= \dim(\UU^3_q)=
\begin{cases}\textstyle{\binom{q}{3}}-1 & \qif q\geq 5\\
      \textstyle{\binom{q}{2}}-1 & \qif 3\le q\le 4\\
      q-1 &\qif 1\le q\le 2.
      \end{cases}
$$
    
   We now proceed to count the number of $i$-faces of $\Sq$ for $i>0$.
   We first show that this number equals
\begin{multline}\label{eq:intermediate}
    \binom{\binom{q}{3}}{i+1} 
    + q\Bigg{(}\binom{\binom{q}{2}}{i + 1} - \binom{\binom{q-1}{2}}{i+1}\Bigg{)} 
    + q\binom{q-1}{i} 
    + \binom{q}{2}\binom{q-2}{i-1}\\ 
    + \sum_{s = 2}^{q - 3} q \textstyle{\binom{q-1}{s-1} (\binom{\binom{q-s}{3}+\binom{q-1}{2}}{i-s+2} - \binom{\binom{q-1}{2}}{i-s+2}) }
\end{multline}
   
\begin{itemize}
    \item The number of $i$-faces in the facet $\U_q^3$ is 
    $\binom{\binom{q}{3}}{i+1}$. This accounts for the first term in  \eqref{eq:intermediate}. 

    \item The number of $i$-faces in a facet of type $\U_{\be_a}^3$ is $\binom{\binom{q}{2}}{i+1}$ and there are $q$ such facets. Out of all the $i$-faces of a facet of this type, $\binom{\binom{q-1}{2}}{i+1}$ are also faces of $\U_q^3$ and hence have been counted before. Moreover, $\U_{\be_a}^3 \cap \U_{\be_b}^3 \subset \U_q^3$ for $a \neq b$. This takes care of the second term in~\eqref{eq:intermediate}
    
    \item We next count the number of $i$-faces in a facet of type $\U_{2\be_a}^3$ which were not previously counted as faces of facets of type $\U_q^3$ or $\U_{\be_a}^3$. Such a face $\sigma$  must contain a vector of the form $3\be_a$, otherwise it was already counted as a face of $\U_{\be_a}^3$. There are $\binom{q-1}{i}$ such faces. Since there are $q$ facets of type $\U_{3\be_a}^3$ and $\U_{3\be_a}^3 \cap \U_{3\be_b}^3 = \emptyset$ for $a \neq b$, this gives us the third term in~\eqref{eq:intermediate}. 
    
    \item We now count the number of $i$-faces in a facet of type $\U_{\be_a+\be_b}^3$ that have not been previously counted as faces of facets of types $\U_q^3$, $\U_{2\be_a}^3$ or $\U_{\be_a}^3$. For a fixed $a$ and $b$ such a face must contain both vectors $2\be_a+\be_b$ and $\be_a+2\be_b$. Thus there are exactly 
    \[{\textstyle
    \binom{q-2}{i-1}
    }\]
    such faces. Finally, there are $\binom{q}{2}$ facets of type $\U_{\be_a+\be_b}^3$, and moreover, if $\be_a+\be_b\ne \be_c+\be_d$, 
    \[
    \U_{\be_a+\be_b}^3\cap \U_{\be_c+\be_d}^3\subseteq \U_q^3\cup\bigcup_{t=1}^q\left(\U_{\be_t}\cup\U_{2\be_t} \right)\,.
    \]
This accounts for the fourth term in \eqref{eq:intermediate}. 

\item Finally, we count the number of $i$-faces $\sigma$ in a facet  of type $W_{P,a}$ that  have not been previously counted as faces of facets of the types $\U_q^3$, $\U_{2\be_a}^3$, $\U_{\be_a}^3$, or $\U_{\be_a+\be_b}^3$.

For any such $\sigma$, if $2\be_a+ \be_{b_1},\dots, 2\be_a+\be_{b_{s-1}}$ are all the nonsquare-free vectors in $\sigma$,  then there exists a unique set $P=\{a, b_1, \dots, b_{s-1}\}\subseteq [q]$ and a unique index $a\in P$ such that $\sigma$ can be written as a disjoint union
\begin{align}
\sigma&=\sigma_1\cup \sigma_2\cup \sigma_3\qwith \nonumber \\
\label{e:sigma1} \sigma_1&\coloneqq \{ 2\be_a+\be_b \st b\in P\smallsetminus \{a\}\}\\
\label{e:sigma2}\sigma_2&\subseteq S_2:= \{ \be_j+\be_k+\be_l \st 1\le j<k<l\le q,\,\, j,k,l \not\in P  \}\\
\label{e:sigma3} \sigma_3&\subseteq S_3:= \{ \be_a+\be_j+\be_k \st 1\le j<k\le q,\,\,  a\notin \{j,k\}\}.   
\end{align}
Since $\sigma$ is not a face of  $\U_{\be_a}^3$, we must have $|\sigma_2|\ge 1$.

Conversely, for any integer $s$ with 
$2\le s\le q-3$ and for any choice of
\begin{itemize}
    \item an index $a \in [q]$;
    \item a set $P$ satisfying $a \in P \subseteq [q]$ and $|P|=s$;
    \item a non-empty set $\sigma_2$ satisfying \eqref{e:sigma2} and a set $\sigma_3$ satisfying \eqref{e:sigma3}, such that $|\sigma_2 \cup \sigma_3|=i-s+2$;
\end{itemize}
    and with $\sigma_1$ defined as in \eqref{e:sigma1}, the set $\sigma:=\sigma_1\cup \sigma_2\cup \sigma_3$ is an $i$-face of the facet $W_{P,a}$, and is not a face of any facet of the types $\U_q^3$, $\U_{2\be_a}^3$, $\U_{\be_a}^3$, or $\U_{\be_a+\be_b}$. 
    There are $q$ choices for $a$ and $\binom{q-1}{s-1}$ choices for $P$ containing $a$.  There are $\binom{\binom{q-s}{3}+\binom{q-1}{2}}{i-s+2} - \binom{\binom{q-1}{2}}{i-s+2}$ choices for $\sigma_2 \cup \sigma_3$ because $|S_2| = \binom{q-s}{3}$ and $|S_3|=\binom{q-1}{2}$, and $S_2$ and $S_3$ are disjoint, but we cannot pick all the elements of $\sigma_2 \cup \sigma_3$ from $S_3$, since $|\sigma_2| > 1$.  Therefore, the number of possible faces $\sigma$ is equal to
\[
\textstyle{q\binom{q-1}{s-1} \left(\binom{\binom{q-s}{3}+\binom{q-1}{2}}{i-s+2} - \binom{\binom{q-1}{2}}{i-s+2}\right)}.
\]
This explains the last term in \eqref{eq:intermediate}.
\end{itemize}
\smallskip

Using the Chu-Vandermonde identity (see, e.g.~\cite[p.~54]{Merris}),
\begin{equation}\label{e:comb-sum-identity}
    \binom{a+b}{k} = \sum_{j=0}^k \binom{a}{j}\binom{b}{k-j},
\end{equation}
we can remove one of the terms from the summation in~\eqref{eq:intermediate} as follows:
\begin{align*}
    \sum_{s=2}^{q-3} &q\binom{q-1}{s-1} \binom{\binom{q-1}{2}}{i-s+2}
    = q\left(\sum_{s=1}^{q} \binom{q-1}{s-1} \binom{\binom{q-1}{2}}{i-s+2}
        - \sum_{s \in \{1, q, q-1, q-2\}} \binom{q-1}{s-1} \binom{\binom{q-1}{2}}{i-s+2}\right)\\
    &= q\left(\binom{(q-1)+\binom{q-1}{2}}{i+1}
         - \binom{\binom{q-1}{2}}{i+1}
        - \binom{q-1}{2}\binom{\binom{q-1}{2}}{i-q+4}
         - (q-1)\binom{\binom{q-1}{2}}{i-q+3}
        - \binom{\binom{q-1}{2}}{i-q+2}\right)\\
    &= q\left(\binom{\binom{q}{2}}{i+1}
        - \binom{\binom{q-1}{2}}{i+1}
        - \binom{q-1}{2}\binom{\binom{q-1}{2}}{i-q+4}
        - (q-1)\binom{\binom{q-1}{2}}{i-q+3}
        - \binom{\binom{q-1}{2}}{i-q+2}\right).
\end{align*}
Plugging this back into~\eqref{eq:intermediate}, some cancellation of terms gives~\cref{eq:sharpbound}.
\qedhere
\end{proof}

\cref{t:sharpbound3} gives us a sharp bound for the betti numbers of the third power of a square-free monomial ideal $I$ with $q$ generators. In~\cref{fig:sharpbound3} we compare the bounds from~\cref{t:sharpbound3} 
to the other bounds in~\eqref{e:inequalities}, namely, $f_i(\TT_q^3)$ and  $f_i(\LL_q^3)$.
Counting faces of the Taylor complex $\TT_q^3$, and restating~\cite[Corollary~6.2]{Lr} for ease of reference, we have:
\begin{align}
    f_i(\TT_q^3) &= \binom{\binom{q + 2}{3}}{i+1} \label{e:Tbound} \\
    f_i(\LL_q^3) &= q\binom{q - 1}{i} + \binom{\binom{q}{3} + 2 \binom{q}{2}}{i + 1} \label{e:Lbound}
\end{align}
{
\begin{figure}[h!]
    \centering
    \includegraphics[scale = 0.4]{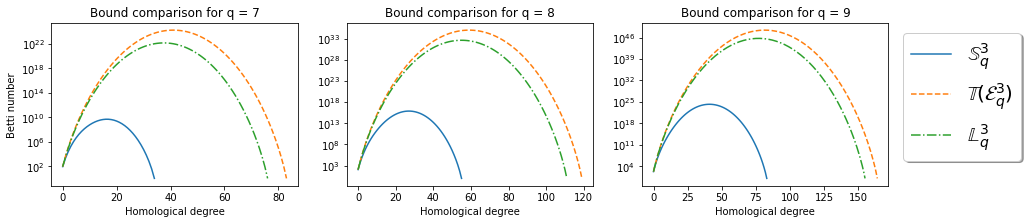}
    \caption{A comparison between known bounds for betti numbers}
    \label{fig:sharpbound3}
\end{figure}
}

We now give a more algebraic analysis of the comparison of these three bounds.  
We will show in \cref{t:c-bound} that the pattern demonstrated in 
\cref{fig:sharpbound3} continues to hold and becomes more pronounced as $q$ grows: the bound given by $f_i(\Sq)$ is significantly better than the bounds given by $f_i(\TT_q^3)$ and $f_i(\LL_q^3)$.  
A close examination of \cref{fig:sharpbound3} shows that the improvement in the bound becomes more significant as $i$ grows relative to the projective dimension of $\E^3_q$. In contrast, if we fix an $i$, for $q$ large relative to $i$ the improvement is not apparent, as seen in \cref{t:i-bound}.

\begin{theorem}\label{t:i-bound}
    Fix $i \geq 0$.  Then the dominant term of $f_i(\Sq)$ is $\binom{\binom{q}{3}}{i+1}$. In particular, $f_i(\TT_q^3), f_i(\LL_q^3)$, and  $f_i(\Sq)$ are all $O(q^{3i+3})$. 
\end{theorem}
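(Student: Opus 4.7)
The plan is to argue directly from the explicit expression for $f_i(\Sq)$ in equation~\eqref{eq:sharpbound} of \cref{t:sharpbound3}, bounding each of the seven summands asymptotically in $q$ with $i$ held fixed. First I would note that $\binom{q}{3} = \frac{q(q-1)(q-2)}{6}$ is a polynomial of degree $3$ in $q$, so the leading summand $\binom{\binom{q}{3}}{i+1}$ is a polynomial in $q$ of degree exactly $3(i+1) = 3i+3$ with positive leading coefficient $\frac{1}{6^{i+1}(i+1)!}$. This yields the lower bound $f_i(\Sq) = \Omega(q^{3i+3})$ and pins down the candidate dominant term.

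Next I would show that every remaining summand in~\eqref{eq:sharpbound} is $O(q^{3i+2})$ or vanishes identically for $q$ large. The two standalone terms $q\binom{q-1}{i}$ and $\binom{q}{2}\binom{q-2}{i-1}$ are polynomials in $q$ of degree at most $i+1$, which is at most $3i+2$. The three ``tail'' terms $q\binom{q-1}{2}\binom{\binom{q-1}{2}}{i-q+4}$, $q(q-1)\binom{\binom{q-1}{2}}{i-q+3}$, and $q\binom{\binom{q-1}{2}}{i-q+2}$ vanish once $q > i+4$ because the bottom argument becomes negative. For the summation $\sum_{s=2}^{q-3} q\binom{q-1}{s-1}\binom{\binom{q-s}{3}+\binom{q-1}{2}}{i-s+2}$, I would observe that the binomial coefficient $\binom{\binom{q-s}{3}+\binom{q-1}{2}}{i-s+2}$ is zero whenever $i-s+2<0$, so only the indices $s\in\{2,3,\ldots,i+2\}$ contribute. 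For each such $s$, the factor $q\binom{q-1}{s-1}$ is $\Theta(q^s)$, and since $\binom{q-s}{3}+\binom{q-1}{2} = \Theta(q^3)$ as $q\to\infty$ for bounded $s$, the factor $\binom{\binom{q-s}{3}+\binom{q-1}{2}}{i-s+2}$ is $\Theta(q^{3(i-s+2)})$. Hence each contributing summand is $\Theta(q^{3i-2s+6})$, which is maximized at $s=2$ with order $\Theta(q^{3i+2})$. Adding the $O(1)$ nonvanishing terms preserves this order, so the total remainder is $O(q^{3i+2})$, establishing $f_i(\Sq) = \binom{\binom{q}{3}}{i+1} + O(q^{3i+2})$ and proving the first claim.

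For the ``in particular'' assertion I would invoke formulas~\eqref{e:Tbound} and~\eqref{e:Lbound} directly. Since $\binom{q+2}{3}$ and $\binom{q}{3}+2\binom{q}{2}$ are both polynomials of degree $3$ in $q$, the expressions $\binom{\binom{q+2}{3}}{i+1}$ and $\binom{\binom{q}{3}+2\binom{q}{2}}{i+1}$ are polynomials of degree $3i+3$, so $f_i(\TT_q^3), f_i(\LL_q^3) = O(q^{3i+3})$. Combined with the analysis of $f_i(\Sq)$ above (or with the containments $\Sq \subseteq \LL_q^3 \subseteq \TT_q^3$), all three $f$-vector entries are $O(q^{3i+3})$.

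The argument is essentially routine asymptotic analysis of binomial expressions, and no serious obstacle is anticipated. The only delicate point is the bookkeeping for the summation: one must verify carefully that for fixed $i$ the set of contributing indices $s$ does not grow with $q$, so that the sum's asymptotic order is dictated by a single term (at $s=2$) rather than by the growing number of summands.
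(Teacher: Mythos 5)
Your proposal is correct and follows essentially the same route as the paper: both arguments read off the degrees of the binomial expressions in \eqref{eq:sharpbound}, identify $\binom{\binom{q}{3}}{i+1}$ as the unique degree-$(3i+3)$ term, and dispose of the remaining terms as lower order, with the same treatment of \eqref{e:Tbound} and \eqref{e:Lbound} for the ``in particular'' clause. Your handling of the summation over $s$ (observing that the terms vanish for $s>i+2$, so only boundedly many indices contribute) is in fact a slightly cleaner version of the paper's bookkeeping, which instead bounds the sum of $q-5$ polynomials of degree at most $3i-1$ by a polynomial of degree $3i$.
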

\begin{proof}
First note that the degree in $q$ of the binomial coefficient 
    $$
        \binom{\binom{q - t}{b}}{c}
    $$
is $bc$.  We will use this to see that the dominant term of $f_i(\SS_q^3)$ is $\binom{\binom{q}{3}}{i+1}$, a polynomial of degree $3i+3$.  Each of the other binomial terms in the sum is a polynomial of lower degree.  The only term remaining is 
$\sum_{s=2}^{q-3} p_s(q)$, where $p_s(q)= q\binom{q-1}{s-1}\binom{\binom{q-s}{3}+\binom{q-1}{2}}{i-s+2}$.  Now $\binom{q-s}{3}+\binom{q-1}{2}$ is a polynomial of degree $3$, so $p_s(q)$ is a polynomial of degree $1 + (s-1) + 3(i-s+2) = 3i - 2s + 6$.  In particular, $p_2(q)$ has degree $3i+2$.  Each of the other terms in the summation has degree at most $3i - 1$, since $s \geq 3$ for those terms; adding up $q-5$ such polynomials increases the degree by at most 1, giving a total degree for the summation of the $s \geq 3$ terms of at most $3i$.  Therefore, the entire sum $\sum_{s=2}^{q-3} p_s(q)$ has degree $3i+2$.

Using the same tools, it is apparent that $f_i(\TT_q^3)$ and $f_i(\LL_q^3)$ are also polynomials in $q$ of degree $3(i + 1)$.
\end{proof}

The bound on betti numbers given by $f_i(\SS_q^3)$ is sharp, since it is achieved by $\beta_i({\E_q}^3)$. Thus \cref{t:i-bound} for $r=3$, and \cite[Theorem~5.9]{extremal} more generally, show that for fixed $i$, if $q$ is allowed to grow then Taylor's bound is asymptotically sharp for betti numbers of powers of ideals. 
Yet \cref{fig:sharpbound3} shows that, when we fix $q$ instead of $i$, our bound $f_i(\SS_q^3)$ is exponentially smaller than the other bounds $f_i(\TT_q^3)$ and $f_i(\LL_q^3)$, and that the ratios
$f_i(\TT_q^3)/f_i(\SS_q^3)$ and $f_i(\LL_q^3)/f_i(\SS_q^3)$
increase rapidly as $i$ grows. 
We will verify and quantify this observation in \cref{t:c-bound}.

Our point of view is to treat $f_i(\TT_q^3), f_i(\LL_q^3)$, and  $f_i(\SS_q^3)$ as functions of $i$ for a fixed large value of $q$.  In fact, each one is, asymptotically (for large $q$), a binomial function 
$\binom{u}{i}$ in $i$, for some $u$.  We compare these functions by first comparing their peak values at $i=u/2$, which is where the binomial function is maximum.
We then analyze the growth of the functions as $i$ ranges from $0$ to $\dim \SS_q^3+1$, the maximum value of $i$ for which $f_{i-1}(\SS_q^3) \neq 0$.
We focus, however, not on $i$ itself but on the proportion $c=i/(\dim \SS_q^3+1)$.  We do this by fixing $c$, and letting $q$ go to infinity.  The resulting functions of $c$ show the limit behavior of $f_i(\TT_q^3), f_i(\LL_q^3)$ and $f_i(\SS_q^3)$.
\cref{t:c-bound} shows that as $q$ grows, our bounds on the higher Betti numbers given by the Scarf complex are more than exponentially better than those given by the Taylor complex.

\begin{theorem}\label{t:c-bound}
The following statements hold:
\begin{enumerate}
    \item\label{i:bounds-peak} Asymptotically in $q$, the ratios
        $$\frac{\max_i f_i(\TT_q^3)}{\max_i f_i(\SS_q^3)}
        \qand
        \frac{\max_i f_i(\LL_q^3)}{\max_i f_i(\SS_q^3)} \qare 
        \frac{2^{q^2+O(q)}}{\sqrt{1+6/q}}.$$
    \item\label{i:bounds-all} 
   Fix $0 \leq c < 1$. Define $i = c(\dim(\SS_q^3)+1)$.  Then, asymptotically in $q$, the ratios
    \[
        \frac{f_{i-1}(\TT_q^3)}{f_{i-1}(\SS_q^3)} \qand
        \frac{f_{i-1}(\LL_q^3)}{f_{i-1}(\SS_q^3)}
        \qare
        \left(\frac{1}{1-c}\right)^{q^2 + O(q)}O\left(\exp\left(\frac{-3cq}{1-c} \right)\right).  
    \]
\end{enumerate}
\end{theorem}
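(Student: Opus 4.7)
The plan is a direct asymptotic computation, using the explicit $f$-vector formulas for the three complexes. Set $S = \binom{q}{3}$, $L = \binom{q}{3} + 2\binom{q}{2}$, and $T = \binom{q+2}{3}$, so that $T - S = 2\binom{q}{2} + q$ and $L - S = 2\binom{q}{2}$ are both $q^{2} + O(q)$. My first step would reduce each $f_{i-1}$ to a single binomial coefficient. The Taylor case is immediate: $f_{i-1}(\TT_q^3) = \binom{T}{i}$ by \eqref{e:Tbound}. For $\LL_q^3$, the correction term $q\binom{q-1}{i-1}$ in \eqref{e:Lbound} is at most $q\cdot 2^{q}$, negligible next to $\binom{L}{i}$ at both $i = \lfloor L/2\rfloor$ (size $\sim 2^{L}/\sqrt{L}$) and $i = cS$ (size $\exp(\Theta(q^{3}))$). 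For $\SS_q^3$ the analogous dominance claim for the formula in \cref{t:sharpbound3} is more delicate: each auxiliary term has the form $(\text{poly in }q)\cdot\binom{N}{j}$ with either $j$ small or $N \le \binom{q-2}{3} + \binom{q-1}{2} = S - \binom{q-2}{2}$; using $\binom{S-\delta}{cS}/\binom{S}{cS}\sim (1-c)^{\delta}$ (from the next step), each such term is at most $(1-c)^{\Theta(q^{2})}\binom{S}{i+1}$, and the $O(q)$ summands over $s$ cannot compensate. This yields $f_{i-1}(\SS_q^{3}) = \binom{S}{i}(1+o(1))$ uniformly on the relevant range.

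Next I would establish the central estimate: for $k = \lfloor c n\rfloor$ with fixed $c \in (0,1)$ and $\Delta^{3}/n^{2} = O(1)$,
\[
\frac{\binom{n+\Delta}{k}}{\binom{n}{k}} \;=\; \prod_{m=1}^{\Delta}\frac{n+m}{(1-c)n+m+O(1)} \;=\; \Bigl(\tfrac{1}{1-c}\Bigr)^{\Delta}\exp\!\Bigl(-\tfrac{c\,\Delta^{2}}{2(1-c)\,n}\Bigr)\cdot O(1).
\]
This follows by taking logs and using $\log(1+m/n) - \log(1-c+m/n) = \log(1/(1-c)) - cm/((1-c)n) + O(m^{2}/n^{2})$; summing over $m = 1, \ldots, \Delta$ gives the three factors, with the error $\sum_{m}O(m^{2}/n^{2}) = O(\Delta^{3}/n^{2})$ absorbed into the final $O(1)$. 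Specializing to $n = S = q^{3}/6 + O(q^{2})$, $k = cS$, and $\Delta \in \{T-S,\, L-S\} = q^{2} + O(q)$, we compute $\Delta^{2}/(2n) = 3q + O(1)$, and the ratio becomes $(1/(1-c))^{q^{2}+O(q)}\cdot O(\exp(-3cq/(1-c)))$. Combined with the dominance step above, this proves part (2) for both $\TT_q^3/\SS_q^3$ and $\LL_q^3/\SS_q^3$.

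For part (1) I would invoke Stirling: $\max_{i}\binom{n}{i+1} = \binom{n}{\lfloor n/2\rfloor}(1+o(1)) = 2^{n}\sqrt{2/(\pi n)}\,(1+o(1))$. Re-running the dominance argument at $i+1 = \lfloor n/2\rfloor$ shows that this is also the asymptotic maximum of $f_{i}$ for each complex (here the auxiliary binomials are bounded by $2^{O(q^2)}$, versus $2^n = 2^{\Theta(q^3)}$ for the main term). The ratio of maxima then reduces to $2^{\Delta}\sqrt{S/n}\,(1+o(1))$ with $n \in \{T, L\}$ and $\Delta = n - S = q^{2} + O(q)$; together with $T/S = (q+2)(q+1)/((q-1)(q-2)) = 1 + 6/q + O(q^{-2})$ and $L/S = 1 + 6/(q-2)$, this gives $2^{q^{2}+O(q)}/\sqrt{1+6/q}$, as desired.

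The main obstacle will be the uniform dominance of the main term $\binom{S}{i+1}$ in \cref{t:sharpbound3}, particularly controlling the sum over $s$: among these $\Theta(q)$ summands, one must identify the $s=2$ contribution as the largest (its top parameter is $S - \binom{q-2}{2}$, closest to $S$) and verify that the total still drops by a factor $(1-c)^{\Theta(q^{2})}$ relative to the main term. Once this calibration is in place, the remaining work consists of the routine Taylor expansion of the binomial ratio and a standard application of Stirling's formula.
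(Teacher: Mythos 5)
Your proposal is correct and follows the same overall strategy as the paper: reduce each $f_{i-1}$ to a single binomial coefficient and then do an asymptotic analysis of the resulting ratios. The one genuinely different ingredient is your central estimate for part (2): you write $\binom{n+\Delta}{k}/\binom{n}{k}$ as the telescoping product $\prod_{m=1}^{\Delta}(n+m)/(n-k+m)$ and expand its logarithm, whereas the paper applies Stirling's formula to all four factorials and then invokes the product-limit definition of $e^x$, splitting the expression into factors $\left(\frac{s(q)-i}{s(q)}\right)^{s(q)-t(q)}$, $\left(\frac{s(q)}{t(q)}\right)^{-t(q)}$, and $\left(\frac{s(q)-i}{t(q)-i}\right)^{t(q)-i}$. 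Both routes land on $(1/(1-c))^{q^2+O(q)}\exp(-3cq/(1-c))\cdot O(1)$; yours is arguably the more elementary and transparent of the two, since the error control reduces to $\sum_m O(m^2/n^2)=O(\Delta^3/n^2)=O(1)$. You are also more careful than the paper on one point worth keeping: the reduction of $f_{i-1}(\SS_q^3)$ to its dominant term $\binom{S}{i}$ is justified in the paper by citing \cref{t:i-bound}, which is a fixed-$i$, polynomial-in-$q$ statement, while in \cref{t:c-bound}(2) the index $i=cS$ grows with $q$; your explicit bound showing each auxiliary term of \cref{t:sharpbound3} is smaller by a factor $(1-c)^{\Theta(q^2)}$ (or is $2^{O(q^2)}$ against $\exp(\Theta(q^3))$) supplies the uniformity that the regime actually requires. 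Your Stirling computation for part (1) matches the paper's exactly.
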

Since $1/(1-x) \geq e^x$ for $x$ in the range $0 \leq x < 1$, this ratio grows faster than exponential in $i$.

\begin{proof}\mbox{}
By~\cref{t:i-bound}, $f_{i-1}(\SS_q^3), f_{i-1}(\TT_q^3), f_{i-1}(\LL_q^3)$ are all polynomials in $q$ of degree $3i$, so for large $q$, we may consider only the dominant term of each: 
\[
    S(q,i) = \binom{s(q)}{i},\
    T(q,i) = \binom{t(q)}{i},\
    L(q,i) =\binom{\ell(q)}{i},
\]
respectively, where
\begin{align*}
    s(q) = \binom{q}{3} &= \frac{1}{6}q^3 - \frac{1}{2}q^2 + O(q) \\ 
    t(q) = \binom{q+2}{3} &= \frac{1}{6}q^3 + \frac{1}{2}q^2 + O(q)\\
    \ell(q) = \binom{q}{3}+2\binom{q}{2} &= \frac{1}{6}q^3 + \frac{1}{2}q^2 + O(q).
\end{align*}
  Since $t(q)$ and $\ell(q)$ agree up to $O(q)$ terms, we will only analyze $t(q)$.  Calculations with $\ell(q)$ are similar.

It will be helpful to note these easy calculations: 
\begin{equation}\label{e:fqgq}
    t(q) - s(q) = q^2 + O(q),\qand
    \frac{t(q)}{s(q)} = 1 + \frac{6}{q} + O(q^{-2}).
\end{equation}
Also recall Stirling's approximation,
\[
    n! = \sqrt{2\pi n}(n/e)^n(1+O(n^{-1})),
\]
and the product limit definition of $e^x$,
\[
    (1 + x/n)^n = e^{x - (x^2/n) + O(x^3/n^2)}.
\]

Proof of~\eqref{i:bounds-peak}:
The maximum value of any binomial function $\binom{n}{i}$ is achieved at $i = n/2$.  
\begin{align*}
    \frac{\binom{t(q)}{t(q)/2}}{\binom{s(q)}{s(q)/2}}
        &= \frac{t(q)!}{s(q)!}\left(\frac{(s(q)/2)!}{(t(q)/2)!} \right)^2\\
        &= \frac{\sqrt{2\pi t(q)}(t(q)/e)^{t(q)}(1+O(q^{-3}))}{\sqrt{2\pi s(q)}(s(q)/e)^{s(q)}(1+O(q^{-3}))}
        \left( \frac{\sqrt{\pi s(q)}(s(q)/2e)^{s(q)/2}(1+O(q^{-3}))}{\sqrt{\pi t(q)}(t(q)/2e)^{t(q)/2}(1+O(q^{-3}))} \right)^2\\
        &= \sqrt{\frac{s(q)}{t(q)}}2^{t(q)-s(q)}(1+O(q^{-3}))
        = \frac{2^{q^2+O(q)}(1+O(q^{-3}))}{\sqrt{1 + 6/q + O(q^{-2})}}\\
        &=2^{q^2 + O(q)}/\sqrt{1 + 6/q},
\end{align*}
where  the second equality is by Stirling's formula, and the fourth equality is by~\cref{e:fqgq}.

Proof of~\eqref{i:bounds-all}:
First note that $i = c(\dim (\Sq) + 1) = c\binom{q}{3} =cs(q)$.
Then $O(s(q)-i) = O(s(q)(1-c)) = O(s(q)) = O(q^3)$, since $c$ is constant.  Similarly, $O(t(q)-i) = O(q^2 + s(q) - i) = O(q^2) + O(s(q) - i) = O(q^3)$.  Therefore,
\begin{align*}
    \frac{T(q,i)}{S(q,i)} &= \frac{\binom{t(q)}{i}}{\binom{s(q)}{i}}
            = \frac{t(q)!}{s(q)!} \frac{(s(q)-i)!}{(t(q)-i)!} \\
        &= \frac{\sqrt{2 \pi t(q)}(t(q)/e)^{t(q)}(1+O(q^{-3}))}{\sqrt{2 \pi s(q)}(s(q)/e)^{s(q)}(1+O(q^{-3}))}
            \frac{\sqrt{2 \pi (s(q)-i)}((s(q)-i)/e)^{s(q)-i}(1+O(q^{-3}))}{\sqrt{2 \pi (t(q)-i)}((t(q)-i)/e)^{t(q)-i}(1+O(q^{-3}))}\\
        &= \frac{t(q)^{t(q)}}{s(q)^{s(q)}}
            \frac{(s(q)-i)^{s(q)-i}}{(t(q)-i)^{t(q)-i}}
            \sqrt{\frac{t(q)(s(q)-i)}{s(q)(t(q)-i)}}
            (1+O(q^{-3}))\\
        &=\left(\frac{t(q)}{s(q)} \right)^{t(q)} \frac{1}{s(q)^{s(q)-t(q)}}
            \left(\frac{s(q)-i}{t(q)-i} \right)^{t(q)-i}
            (s(q)-i)^{s(q)-t(q)}
            \sqrt{\frac{O(q^6)}{O(q^6)}}(1+O(q^{-3}))\\
        &= \left(\frac{s(q)-i}{s(q)}\right)^{s(q)-t(q)}
            \left(\frac{s(q)}{t(q)} \right)^{-t(q)}
            \left(\frac{s(q)-i}{t(q)-i} \right)^{t(q)-i}
            O(1+q^{-3}),
\end{align*}
where the third equality is by Stirling's formula. We now deal with the first three factors separately.

The first factor simplifies as follows:
\[
    \left(\frac{s(q)-i}{s(q)}\right)^{s(q)-t(q)}
    = (1-c)^{-q^2 + O(q)}
\]
by \cref{e:fqgq}.

The second factor expands as follows, by the product limit definition of $e^x$:
\begin{align*}
    \left(\frac{s(q)}{t(q)} \right)^{-t(q)} 
    &= \left(1 - \frac{t(q)-s(q)}{t(q)} \right)^{-t(q)}
    = \left(1 + \frac{t(q)-s(q)}{-t(q)} \right)^{-t(q)}\\
    &= \exp \left((t(q)-s(q)) - \frac{1}{2}\frac{(t(q)-s(q))^2}{-t(q)} 
        +O \left(\frac{(t(q)-s(q))^3}{(-t(q))^2} \right)\right).
\end{align*}

Similarly, the third factor expands as follows, by the product limit definition of $e^x$:
\begin{align*}
            \left(\frac{s(q)-i}{t(q)-i} \right)^{t(q)-i}
    &= \left(1 - \frac{t(q)-s(q)}{t(q)-i} \right)^{t(q)-i}
    = \left(1 + \frac{-(t(q)-s(q))}{t(q)-i} \right)^{t(q)-i}\\
    &= \exp \left(-(t(q)-s(q)) - \frac{1}{2}\frac{(t(q)-s(q))^2}{t(q)-i} 
        +O \left(\frac{(t(q)-s(q))^3}{(t(q)-i)^2} \right)\right).
\end{align*}

When we multiply the second and third factors, we get
\[
    \exp\left(\frac{1}{2}(t(q)-s(q))^2\left(\frac{1}{t(q)} - \frac{1}{t(q)-i} \right) + O\left(\frac{(q^2)^3}{(q^3)^2}\right) \right)
\]
\begin{align*}
    &=     \exp\left(\frac{1}{2}(t(q)-s(q))^2\left(\frac{-i}{t(q)(t(q)-i)} \right) + O(1) \right)\\
    &=     O\left(\exp\left(\frac{1}{2}(t(q)-s(q))^2\left(\frac{-i/s(q)}{t(q)(t(q)/s(q)-i/s(q))} \right)\right) \right)\\
    &=     O\left(\exp\left(\frac{1}{2}(q^2+O(q))^2\left(\frac{-c}{(q^3/6+O(q^2))(1+O(q^{-1})-c)} \right) \right) \right)\\
    &=     O\left(\exp\left(\frac{1}{2}(q^2+O(q))^2\left(\frac{-6c}{((1-c)q^3+O(q^2))} \right) \right) \right)\\
    &= O\left(\exp\left(\frac{-3c}{1-c} \frac{q^4 + O(q^3)}{q^3 + O(q^2)} \right)\right) = O\left(\exp \left(\frac{-3cq}{1-c}\right)\right),
\end{align*}
using~\cref{e:fqgq} in the third equation.
The result now follows from multiplying together the four simplified factors.
\end{proof}

We close the paper with an interesting observation about the betti numbers of $\Erq$.

\begin{remark}[{\bf Log-concavity of betti numbers when $q\leq 3$}]\label{log-concave}
    When $q \leq 3$, it is shown in~\cref{e:q34} that $\UU_q^r$ is completely determined by facets of type $\U_\ba^r$, where $|\ba| = r - 1$. In this case, the complex $\UUrq$ has appeared before in the context of subdivisions of simplicial complexes. More specifically, for $q \leq 3$ the complex $\UUrq$ is the {\it $r$-fold edgewise subdivision} of a $(q-1)$--simplex. Properties of $f$ and $h$-vectors of edgewise subdivisions of complexes were studied in depth by Athanasiadis and Jochemko in~\cite{Athanasiadis,Jochemko}. 
    
     In~\cite{Jochemko} it is shown that, when $r \geq q$, the coefficients $h_i$ of the $h$-polynomial of the $r$-fold edgewise subdivision of a $(q-1)$--simplex are a {\bf log-concave} sequence.  That is
     \[
     h_i^2 \geq h_{i-1}h_{i+1}.
     \]
     This implies the coefficients are also {\bf unimodal}, that is, for some $k$
     \[
        h_1 \leq \cdots \leq h_{k-1} \leq h_k \geq h_{k+1} \geq \cdots h_d.
     \]
     For $q \leq 3$ and $r \geq 1$,~\cite[Lemma~4.2]{Lenz} together with the main results of~\cite{extremal,Jochemko} and some simple computation for boundary cases imply the betti numbers of $\Erq$ are log-concave, and hence unimodal.
   \end{remark}

    When $r = 3$, using our previous results which characterize the faces of $\SS_q^3$, we can check using Macaulay2~\cite{M2} that the $f$-vector of $\SS_q^3$ is log-concave for $q \leq 8$.

\begin{question}  \label{q:log-concavity}

The following are natural questions that arise from \cref{log-concave}.
\begin{enumerate}
    \item\label{i:unimodality}  ({\bf Unimodality}) Is the betti vector of $\Erq$ always unimodal?  In other words, for a given $r, q$, is there a $k$ such that the following inequalities hold?
    \[
        \beta_1(\Erq) \leq \dots \leq \beta_{k-1}(\Erq) \leq \beta_k(\Erq) \geq \beta_{k+1}(\Erq) \geq\dots \geq \beta_{\pd(\Erq)}(\Erq) 
    \]
    \item\label{i:log-concavity} ({\bf Log-concavity}) Is the betti vector of $\Erq$ always log-concave?  In other words, does the inequality 
    \[
        \beta_i(\Erq)^2 \geq \beta_{i - 1}(\Erq) \beta_{i + 1}(\Erq)
    \]
hold for all $q$, $r$ and $i$?
\end{enumerate}
As log-concavity implies unimodality, Question~\eqref{i:log-concavity} is stronger than Question~\eqref{i:unimodality}.  
A possible strategy to positively answer either question might be: Prove \cref{c:conjecture} to reduce the problem to one about face numbers of the Scarf complex $\Srq$; generalize \cref{t:sharpbound3,t:i-bound} to $r>3$ in order to establish a formula for the face numbers of $\Srq$, and show that the dominant term in that formula is a binomial coefficient; and then rely on the log-concavity of binomial coefficients. In particular, a positive answer to~\cref{q:dominantterm} could lead to a positive answer to~\cref{q:log-concavity}. 

\end{question}


\end{document}